\documentclass[final,onefignum,onetabnum]{siamonline250211}

\allowdisplaybreaks
\usepackage[english]{babel}
\usepackage{tikz}

\usepackage{mathrsfs,amsmath}
\usepackage{mathtools}
\usepackage{mathrsfs,amsmath,amsfonts,amssymb}
\usepackage{mathtools} 
\usepackage{graphicx}
\usepackage{xcolor}
\usepackage{bm}

\usepackage{enumitem}
\usepackage{caption}
\usepackage{subcaption}
\usepackage[mathscr]{euscript}
\usepackage{algorithm}
\usepackage{algorithmic}

\newcommand{\defeq}{\vcentcolon=}
\newcommand{\given}{\mathrel{\vert}}

\newcommand{\bvec}[1]{\mathbf{#1}}
\newcommand{\bvecS}[1]{\ensuremath{\boldsymbol{#1}}}
\newcommand{\mat}[1]{\mathbf{{#1}}}
\newcommand{\R}{\mathbb{R}}
\newcommand{\transp}{\top}

\newcommand{\params}{\bvec{m}} 
\newcommand{\paramsFunc}{m} 
\newcommand{\data}{\bvec{d}} 
\newcommand{\Weight}{\bvec{\Pi}} 
\newcommand{\noise}{\bvec{e}} 
\newcommand{\eps}{\bvecS{\varepsilon}} 
\newcommand{\totnoise}{\bvecS{\eta}} 
\newcommand{\sfwd}{\widehat{\FF}} 
\newcommand{\Ofwd}{\FF_{\bvec{O}}} 
\newcommand{\tOfwd}{\widetilde{\FF}_{\bvec{O}}}  
\newcommand{\lfwd}{\bvec{F}}  
\newcommand{\FF}{\bm{\mathrm{\mathscr{F}}}}
\newcommand{\dparmap}[1]{\params_{\scriptscriptstyle{\mathrm{MAP}}}^{{#1}}}

\newcommand{\fo}{pure-BAE}

\newcommand{\EIG}{\Psi} 
\newcommand{\mfEIG}{\EIG_{\!\scriptscriptstyle{\mathrm{MT}}}} 
\newcommand{\mfEIGb}{\bar{\EIG}_{\!\scriptscriptstyle{\mathrm{MT}}}} 
\newcommand{\mfEIGe}{\EIG^\epsilon_{\!\scriptscriptstyle{\mathrm{MT}}}} 
\newcommand{\Op}{\bvec{O}} 
\newcommand{\sensPrec}{\bvec{P}} 

\newcommand{\nSens}{s} 
 
\newcommand{\nDataAll}{d} 
\newcommand{\nChosen}{k} 
\newcommand{\designSubset}{S}
 
\newcommand{\nparams}{n} 
\newcommand{\paramSpace}{\mathscr{M}} 
 
\newcommand{\designSet}{V} 

\newcommand{\likelihood}{\pi_{\data \vert \params}} 
\newcommand{\pipr}{\pi_{\params}} 
\newcommand{\pipost}[1]{\pi^{#1}_{\params \vert \data}} 
\newcommand{\mpost}[1]{\overline{\params}^{#1}_{\data}} 
\newcommand{\Cpost}[1]{\bvecS{\Gamma}^{#1}_{\params \vert \data}} 
\newcommand{\Cnoise}{\bvecS{\Gamma}_{\!\noise}} 
\newcommand{\Cnoisei}{\bvecS{\Gamma}^i_{\!\noise}} 
 
\newcommand{\Cmm}{\bvecS{\Gamma}_{\params\params}} 
\newcommand{\mpr}{\overline{\params}} 
\newcommand{\Cee}{\bvecS{\Gamma}_{\eps\eps}} 
\newcommand{\Cegm}{\bvecS{\Gamma}_{\eps|\params}} 
\newcommand{\Ctgm}{\bvecS{\Gamma}_{\totnoise|\params}} 
\newcommand{\Cem}{\bvecS{\Gamma}_{\eps\params}} 
\newcommand{\Cme}{\bvecS{\Gamma}_{\params\eps}} 
\newcommand{\Cgm}{\bvecS{\Gamma}_{\FF\params}} 
\newcommand{\Cgg}{\bvecS{\Gamma}_{\FF\FF}} 
\newcommand{\Cfm}{\bvecS{\Gamma}_{\lfwd\params}} 
\newcommand{\Cff}{\bvecS{\Gamma}_{\lfwd\lfwd}} 

\newcommand{\norm}[2][]{\left\Vert #2\right\Vert_{#1}} 
\newcommand{\DKL}[2]{\mathcal{D}_{\text{KL}}\left( {#1} \, \| \, {#2} \right)}
\newcommand{\DKLbig}[2]{\mathcal{D}_{\text{KL}}\big( {#1} \, \| \, {#2} \big)}
\newcommand{\Cov}{\mathrm{Cov}}

\newcommand{\parm}{m}

\DeclareMathOperator*{\argmin}{arg\,min}
\DeclareMathOperator*{\argmax}{arg\,max}
\DeclareMathOperator*{\trace}{tr}

\renewcommand{\epsilon}{\varepsilon}

\newtheorem{rmk}{Remark}[section]
\title{Multi-type Sensor Placement for PDE-based Bayesian Inverse Problems\footnote[0]{\funding{
The work of SM and AA was supported in part by 
U.S. Department of Energy, Office of Advanced
Scientific Computing Research Field Work Proposal Number 23-02526.
The work of KK was partially funded by the Carl Zeiss Stiftung through the project ``Model-Based AI: Physical Models and Deep Learning for Imaging and Cancer Treatment''.The work of RN was partially funded by Royal Society of New Zealand Te Ap\=arangi (Marsden Fund Council) Grant MFP-24-UOA-279}}}
\def\addressA{Department of Mathematics, North Carolina State University, Raleigh, NC, USA}
\def\addressB{Interdisciplinary Center for Scientific Computing (IWR),
Heidelberg University, Heidelberg, Germany}
\def\addressC{Department of Engineering Science, University of Auckland,
Auckland, New Zealand}

\author{
Steven Maio\thanks{\addressA}
\and
Alen Alexanderian\footnotemark[1]
\and
Karina Koval\thanks{\addressB}
\and
Ruanui Nicholson\thanks{\addressC}
}

\begin{document}

\maketitle

\begin{abstract}
We address optimal placement of multi-type sensors for Bayesian inverse problems
governed by partial differential equations (PDEs).  The proposed framework
allows for sensors with different accuracies and observation types.  We
formulate the optimal experimental design (OED) problem as a 
knapsack-constrained binary optimization problem
for maximizing expected information gain (EIG). To approximately solve the
resulting optimization problems, we propose a stochastic cost-benefit greedy
algorithm, which admits theoretical guarantees for monotone submodular set
functions.  Specifically, these guarantees apply in the case of linear Gaussian
inverse problems with uncorrelated measurement errors, where the EIG admits a
convenient closed-form expression.
For nonlinear inverse problems, we develop a non-intrusive approach that uses
the Bayesian approximation error framework to define an observation model with
an error-corrected global linear model.  We show that the corresponding
approximate EIG is a lower bound for the exact EIG and thus provides a
principled surrogate objective for the OED problem.  The effectiveness of the
proposed methods is demonstrated in two model inverse problems governed by PDEs.
\end{abstract}
\begin{keywords}
Inverse problems, Bayesian inference, optimal sensor placement, multi-type sensors, 
submodularity, knapsack constraints, expected information gain, Bayesian approximation error. 
\end{keywords}

\begin{MSCcodes}
65C20,  
62K05,  
35R30,  
62F15,  
90C27.  
\end{MSCcodes}

\section{Introduction}\label{sec:intro}
We consider infinite-dimensional Bayesian inverse problems
governed by partial differential equations (PDEs), with data collected
from a set of sensors. In such problems, the statistical quality of the estimated
parameters depends strongly on the design of the sensor network, making 
optimal sensor placement crucial. We consider problems in which one has access
to multiple sensor types. In this setting, we develop efficient methods for
optimal placement of multi-type sensors for linear and nonlinear Bayesian
inverse problems.

The search for an optimal sensor placement can be formulated as an optimal
experimental design (OED) problem~\cite{Ucinski2005}.  OED for
infinite-dimensional inverse problems governed by PDEs is challenging. Specifically, 
upon discretization, one obtains a difficult optimization problem
with an expensive-to-evaluate objective that quantifies the uncertainty or 
information gain
about a high-dimensional inversion parameter~\cite{Alexanderian2021}. 
We focus on finding sensor placements that maximize the
expected information gain (EIG), defined as the Kullback--Leibler divergence
from the posterior to the prior.  For such problems, it is common to consider
the case where the sensors have equal deployment costs
and measure the same quantity.  A typical problem formulation is a
cardinality-constrained binary optimization problem that seeks to identify an
optimal subset from a set of candidate sensor locations; see Section~\ref{subsec:EIGBackground}.

However, in some applications, sensor placement is not limited to deploying
identical sensors throughout the domain. For example, in groundwater flow
applications, one may measure hydraulic head or solute concentration. One may
also measure the same quantity using sensors with different levels of accuracy.
Further, 
different types of sensors typically have different deployment costs. 
In such settings, the experimental design problem is not merely about determining
where to take measurements---one must also determine which sensor types to
deploy and what to measure. This is substantially different from a standard
cardinality-constrained sensor placement problem. Instead of selecting a fixed number of
identical sensors, one must select a collection of sensors that satisfies a
budget constraint, while accounting for the cost and marginal information gain
associated with each candidate sensor. 
This results in a challenging 
knapsack-constrained binary optimization problem.

\paragraph{Related work}\label{subsec:litReview}
There is a growing body of literature devoted to Bayesian OED for inverse
problems governed by PDEs; see, e.g., the review~\cite{Alexanderian2021} or
the more recent efforts~\cite{
Aarset2025,
AlexanderianNicholsonPetra2024,
DuongHelinRojoGarcia2023,
KovalNicholson2025,
WuChenGhattas2023,
WuOLearyRoseberryChenEtAl2023}.
In linear Gaussian problems, the EIG admits a closed-form 
expression~\cite{AlexanderianGloorGhattas2016} which, under the assumption of
uncorrelated Gaussian measurement noise, defines a monotone submodular set
function~\cite{AlexanderianMaio2026}. The latter enables an approximation
guarantee for greedy sensor placement with cardinality
constraints~\cite{KrauseGolovin2014,NemhauserWolseyFisher1978,KrauseSinghGuestrin2008};
see Section~\ref{subsec:submodular-optimization} for background concepts
regarding submodular optimization and the greedy method.  Having nonuniform
costs associated with sensors gives rise to knapsack constraints and motivates
cost-benefit greedy rules~\cite{Sviridenko2004,KrauseGuestrin2005}.  

Since greedy methods can still require many marginal-gain evaluations, several
accelerated variants have been developed, including lazy greedy~\cite{Minoux1978}, 
stochastic greedy~\cite{MirzasoleimanBadanidiyuruKarbasiEtAl2014}, and threshold-based 
methods~\cite{BadanidiyuruVondrak2014}. Stochastic greedy methods are
especially relevant here because they significantly reduce the number of function evaluations
while retaining approximation guarantees in expectation.

Finding sensor placements that maximize the EIG in nonlinear inverse problems is
considerably more challenging than the case of linear inverse problems.  In
nonlinear inverse problems, EIG is generally not available in closed-form and is
often approximated using sampling~\cite{HuanMarzouk2013}, surrogate
models~\cite{WuOLearyRoseberryChenEtAl2023}, or Laplace approximations to
the posterior~\cite{WuChenGhattas2023}.  An alternative approach to design of
nonlinear inverse problems was presented in~\cite{KovalNicholson2025} by 
building on the Bayesian approximation error (BAE) 
framework~\cite{KaipioKolehmainen2013} and 
the global normal linear approximations proposed in~\cite{NicholsonPetraVillaEtAl2023}.

There have also been several works that consider the design of multifidelity or
multimodal sensor networks.  These include data-driven multifidelity sensor
selection under cost constraints~\cite{ClarkBruntonKutz2020} for state estimation, Bayesian
experimental design using a 
Bayesian evidential learning approach for comparing well and geophysical
data~\cite{ThibautCompaireLesparreEtAl2022}, and
multifidelity sensor placement for Bayesian state
estimation~\cite{RamonSarnoskiTumuluriEtAl2026}.  These efforts demonstrate the importance of
moving beyond the standard setting of identical sensors with equal deployment
costs. However, to our knowledge, optimal placement of multi-type sensors in
PDE-based Bayesian inverse problems with high-dimensional parameters is largely unexplored.

\paragraph{Contributions}\label{subsec:Contrib}
This work takes foundational steps toward multi-type sensor placement for linear
and nonlinear Bayesian inverse problems governed by PDEs.  We formulate the problem of finding 
an optimal
placement of multi-type sensors as a knapsack-constrained EIG maximization
problem.  The formulation allows sensors with different costs, accuracies, and
observation types. To provide an accessible presentation, most of the
formulations are presented in a discretized (finite-dimensional) setting.
Throughout, we highlight issues that require care in infinite-dimensional
formulations.

The key contributions of this work are as follows.
\begin{itemize}

\item For linear inverse problems (see Section~\ref{sec:linOED}), we introduce
and analyze a stochastic cost-benefit greedy algorithm for approximately solving
the knapsack-constrained EIG maximization problem.  The analysis yields an
approximation guarantee in expectation and applies more broadly to submodular
maximization under a knapsack constraint.  These approximation guarantees hold
in the case of EIG for linear Gaussian inverse problem with uncorrelated
measurement errors.  We also derive an efficient-to-evaluate expression for the
EIG in the linear case.

\item For nonlinear inverse problems (see Section~\ref{sec:nonlinOED}), building
on the developments in~\cite{KovalNicholson2025}, we devise a non-intrusive
computational framework based on a global linear approximation of the
parameter-to-observable map. We call this approximation the \fo{} global linear model. For
Gaussian priors, we prove that (i) the \fo{} linear model coincides with the
prior expectation of the Jacobian of the parameter-to-observable map; and (ii)
the corresponding approximate EIG---the \fo{} EIG---is a lower bound for the
exact EIG. The first result provides further insight into the use of \fo{}
global linear model in deriving approximate measures of posterior uncertainty;
the second one supports the use of the \fo{} EIG as a principled proxy for the
exact EIG within the EIG maximization setting.

\item We present extensive computational results demonstrating the utility of
the proposed methods for finding near optimal multi-type sensor placements in 
a linear source inversion problem and a nonlinear
porous-medium flow problem; see Section~\ref{sec:NumEx}.

\end{itemize}

\section{Background}\label{sec:bckground}
In this section, we introduce notation and review background materials
on optimal sensor placement for Bayesian inverse problems and submodular optimization.

\subsection{Notation}\label{subsec:notation}
Throughout, we work in a finite-dimensional (discretized) setting.  In the cases where the
forward model is given by a PDE, we assume that the problem has been discretized so that
the inversion parameter vector $\params \in \mathbb{R}^{\nparams}$ represents the
discretized version of a functional model input.  Formulation of Bayesian
inverse problems and sensor selection design problems in the Hilbert space setting
can be found in~\cite{Stuart2010,Alexanderian2026}. 

We note that when discretizing infinite-dimensional inverse problems the
finite-dimensional parameter space is equipped with a suitably weighted inner
product. For example, when using a finite element discretization, a
mass-weighted inner product is used~\cite{Alexanderian2026}.
Working in the correct
inner product is essential to ensure that the discretized quantities 
and operators are consistent with their infinite-dimensional
counterparts.  However, by weighting the operators appropriately, 
one can transform the problem back to the Euclidean space. 
Thus, 
to simplify notation throughout the article, we assume that all
quantities have been transformed so that their matrix representations are with
respect to the Euclidean inner product.
For details on such transformations, we refer to~\cite{AlexanderianSaibaba2018,VillaPetraGhattas2018}.

For a random vector $\bvec{x}$ with the probability density function (PDF)
$\pi$, we denote its mean and covariance matrix by 
$\overline{\bvec{x}} \coloneqq \mathbb{E}_{\pi}[ \bvec{x}]$
and 
$\bvec{\Gamma}_{\bvec{x}\bvec{x}} \coloneqq \Cov_{\pi}(\bvec{x}) = \mathbb{E}_{\pi}\big[ (\bvec{x}-\overline{\bvec{x}})(\bvec{x}-\overline{\bvec{x}})^{\transp} \big]$,
respectively. 
Further, the cross-covariance of random vectors $\bvec{y}$ and $\bvec{z}$, with 
joint PDF $\pi$, is denoted by
\[
\bvec{\Gamma}_{\bvec{y}\bvec{z}} \coloneqq \Cov_{\pi}(\bvec{y},\bvec{z})
= \mathbb{E}_{\pi}\big[
(\bvec{y}-\overline{\bvec{y}})
(\bvec{z}-\overline{\bvec{z}})^{\transp}
\big].
\]

Throughout, with a slight abuse of notation, we denote random variables and
their realizations using the same symbol.  Also, we use $\pi(\bvec{x})$ to
denote the density of a random variable evaluated at a point, and use the same
letter $\pi$ for the joint, marginal, and conditional densities, distinguishing
them with subscripts.  For example, we use  $\pi_{\params}$ for the prior
density, $\pi_{\data}$ for the data marginal, and $\pi_{\params \vert \data}$
for the posterior.  
Additionally, we use the shorthand $\pi =
\mathcal{N}(\overline{\bvec{x}},\bvec{\Gamma}_{\bvec{x}\bvec{x}})$ to mean a
Gaussian density with mean $\overline{\bvec{x}}$ and covariance
$\bvec{\Gamma}_{\bvec{x}\bvec{x}}$. 

Lastly, for a symmetric and positive definite matrix $\bvec{\Gamma}\in\mathbb{R}^{n\times n}$, we define $\|\params\|^2_{\bvec{\Gamma}} = \params^\top \bvec{\Gamma}\params$.

\subsection{Sensor selection for Bayesian inverse problems}\label{subsec:EIGBackground}
We consider the inference of an unknown parameter
$\params \in \mathbb{R}^{\nparams}$ from the observation model 
\begin{equation}\label{eq:model_noS}
    \data = \lfwd(\params) + \noise, \quad \noise \sim \mathcal{N}(\overline{\noise},\Cnoise).
\end{equation}
Here, $\data$ is a data vector, $\noise$ models measurement noise, and  $\lfwd$
is a parameter-to-observable (PTO) map.  In inverse problems governed PDEs,
$\lfwd$ is typically defined as a composition of a PDE solution operator and a
linear observation operator that collects solution data.
Working within a Bayesian paradigm, we assume a Gaussian prior, $\pipr =
\mathcal{N}(\mpr,\Cmm)$, with mean $\mpr \in \mathbb{R}^{\nparams}$ and  
symmetric positive definite covariance matrix $\Cmm$. 

\textbf{Design-dependent observation model}.
We consider inverse problems in which data are collected from a set of sensors,
and focus on the problem of selecting an optimal subset from a
set of candidate sensor locations subject to budget constraints.  To make matters
concrete, here we describe the case where the only constraint is the number of
sensors. This translates to the problem of finding an optimal subset of size $k$
from a set $\designSet = \{1, \ldots, \nSens\}$ that indexes
the candidate sensor locations.
Herein, we
assume each sensor yields a single measurement.
Note that in this case, $\lfwd$
in~\eqref{eq:model_noS} maps $\params$ to a vector of $\nSens$ sensor measurements. 

For $\designSubset
\subset \designSet$, we consider the design-dependent observation model 
\begin{equation}\label{eq:model}
    \data(\designSubset) = \Weight(\designSubset)(\lfwd(\params) + \noise).
\end{equation}
Here, $\Weight(\designSubset) \in \{0,1\}^{|\designSubset| \times \nSens}$ is the
row-selection matrix formed from the rows of the $\nSens \times \nSens$ identity
indexed by $\designSubset$. Thus, $\Weight(\designSubset)$ extracts the
measurements at the selected sensor locations.

The solution to the design-dependent Bayesian inverse problem is the posterior
density
$\pipost{}(\cdot\,; \designSubset) \propto \likelihood(\cdot\,; \designSubset)\pipr$. 
One can show 
$\likelihood(\params; \designSubset) \propto \exp \Big(-\frac{1}{2} \norm[\sensPrec(\designSubset)]{\data - \lfwd(\params) -\overline{\noise}}^2 \Big)$ with
\begin{align}\label{eq:Gamma-S}
    \sensPrec(\designSubset) \defeq
    \Weight(\designSubset)^{\transp}\Big(\Weight(\designSubset)\Cnoise\Weight(\designSubset)^{\transp}\Big)^{-1}\Weight(\designSubset), 
    \quad S \subset V.
\end{align}
In general, exploring this posterior distribution is intractable---it requires 
using expensive sampling procedures. However, in the special case of a linear 
parameter-to-observable (PTO) map, the posterior is Gaussian, $\pipost{}(\designSubset) = \mathcal{N}(\mpost{}(\designSubset),\Cpost{}(\designSubset))$ 
with
{
\setlength{\abovedisplayskip}{2pt}
\setlength{\belowdisplayskip}{2pt}
\begin{align}\label{eq:gaussianPost}
    \mpost{}(\designSubset) &= \Cpost{}(\designSubset)\Big({\lfwd}^{\transp}\Weight(\designSubset)^{\transp}\big(\Weight(\designSubset)\Cnoise\Weight(\designSubset)^{\transp}\big)^{-1}(\data-\overline{\noise})+\Cmm^{-1}\overline{\params}\Big), \\[-4pt]
    \Cpost{}(\designSubset) &= \big({\lfwd}^{\transp}\sensPrec(\designSubset){\lfwd}+ \Cmm^{-1} \big)^{-1}.
\end{align} 
}

\textbf{Optimal sensor placement}.
While there are various OED criteria~\cite{Alexanderian2021,HuanJagalurMarzouk2024}, 
here we focus on choosing a $k$-sensor subset of $V$ that maximizes the expected information gain (EIG),  
\begin{equation}
    \EIG(\designSubset) \!=\! \mathbb{E}_{\data(\designSubset)}\Big[\DKLbig{\pipost{}(\designSubset)}{\pipr}\Big], 
    \;\; \DKLbig{\pipost{}(\designSubset)}{\pipr} \!=\! \mathbb{E}_{\params \vert \data (\designSubset) } \Big[ \log \Big( \frac{\pipost{}(\designSubset)}{\pipr } \Big) \Big].
\end{equation}
The resulting OED problem is a notoriously challenging
optimization problem of the form
\begin{equation}\label{eq:standardOED}
    \max_{\designSubset \subset \designSet} \ \EIG(\designSubset) \quad
    \text{subject to} \  |\designSubset| = k. 
\end{equation}

The first major challenge is the enormous cost of evaluating
$\EIG(\designSubset)$.  In general, one resorts to a double-loop Monte Carlo
sampling approach~\cite{HuanMarzouk2013,Ryan2003}.  A notable exception occurs
when the PTO map is linear, in which case, the EIG admits the following closed-form
expression~\cite{AlexanderianSaibaba2018}:
\begin{equation}
\EIG(\designSubset) = \frac{1}{2}\log \det \big( \bvec{I} + \widetilde{\lfwd}^{\transp}\sensPrec(\designSubset)\widetilde{\lfwd} \big),
\label{eq:EIG_linear}
\end{equation}
where $\widetilde{\lfwd} = \lfwd\Cmm^{\frac{1}{2}}$ is the prior-preconditioned
PTO map.
Despite this explicit expression, optimizing the EIG
for PDE-governed linear inverse problems is still 
challenging due to the high-dimensional parameter space~\cite{AlexanderianSaibaba2018} 
and the need for repeated forward and adjoint solves. 

The second challenge stems from the cardinality constraint on the set
$\designSubset$, which results in an NP-hard combinatorial optimization problem.
To address this issue, various strategies have been proposed, including
relaxation techniques combined with sparsity-inducing
penalties~\cite{HaberHoreshTenorio2008}, continuation
methods~\cite{AlexanderianSaibaba2018,HermanAlexanderianSaibaba2020},
and greedy algorithms for sensor selection~\cite{KrauseGolovin2014,ShamaiahBanerjeeVikalo2010}.

In this work, we focus on greedy algorithms for sensor selection.  Greedy
algorithms are typically non-intrusive and straightforward to implement,
requiring only the ability to evaluate the objective function $\EIG$.
Additionally, many greedy algorithms come with convergence guarantees when the
objective is monotone and submodular~\cite{KrauseGolovin2014}, as we 
detail in the next section.  While the EIG is generally
not submodular, it is under certain assumptions.  Specifically, under our
assumptions of additive Gaussian noise and a Gaussian prior, if we further
assume (i) that the PTO map is linear, and (ii) that the noise covariance
$\Cnoise$ is diagonal, then the
EIG is monotone and
submodular.  

\subsection{Submodular optimization}\label{subsec:submodular-optimization}
Let $\designSet$ be a finite set and $f: 2^{V}\to\mathbb{R}$ a set function.
In our context, $\designSet$ indexes a set of candidate sensors.
We identify a sensor with its index $v\in\designSet$. 

For a subset $S \subset V$ and $v \in V \setminus S$, we define the \textit{marginal
gain} of adding $v$ to $S$ as 
\begin{equation*}\label{eq:def-discrete-derivative}
    \Delta(v | S) \coloneqq f(S \cup \{v\}) - f(S).
\end{equation*}
We say $f$ is \textit{monotone} 
if $\Delta(v | A) \ge 0$ 
for every $A \subset V$ and $v \in V \setminus A$.
Additionally, $f$ is \textit{submodular} if for 
every $A \subset B \subset V$ and $v \in V \setminus B$, we have
$\Delta(v | A) \ge \Delta (v | B)$.
This is a diminishing returns property. In the context of sensor placement, this
says that the utility of adding the sensor $v$ to $A$ is not enhanced by taking the
sensors in $B \setminus A$.

Greedy approaches are common for maximizing monotone submodular functions.  These
algorithms usually begin with some initial set $S_{0}$, a typical choice being
the empty set.  Then, at each iteration $i\ge1$, one chooses a remaining sensor
$v_{i}\in V\setminus S_{i-1} $ that maximizes $\Delta(v | S_{i-1})$ and 
sets $S_i\leftarrow S_{i-1}\cup\{v_i\}$.  This process terminates
when no remaining sensor can be added to $S_{i}$. The greedy solution $S_k$ comes with an
\textit{approximation ratio} of $1 - e^{-1}$ in the case of cardinality
constraints~\cite{KrauseGolovin2014}. Specifically, the greedy solution $S_k$ satisfies
\begin{equation*}
    f(S_k)\ge (1 - e^{-1})\max\{f(S): |S|\le k\}.
\end{equation*}

In the present work, we consider the case where we have multiple sensor types. Thus,
unlike the standard cardinality constrained problem~\eqref{eq:standardOED}, we
need to account for the cost of different sensor types.  Subsequently, we
associate to each sensor $v\in V$ a cost $c(v) > 0$ determined by its sensor type.
We overload this notation and  write 
\[
c(S) \coloneqq \sum_{v \in S}^{}c(v), \quad S \subset V.
\]

The OED problem for the present multi-type sensor setting is
formulated as a knapsack constrained problem of the form
\begin{equation}\label{eq:knapsack-submodular-max-problem}
        \max_{\designSubset\subset\designSet} \ f(S) \quad
        \text{subject to} \ c(S) \le B.
\end{equation}

The standard greedy rule applied to~\eqref{eq:knapsack-submodular-max-problem} 
can perform arbitrarily poorly, because it does not account for the cost of a sensor.
This is addressed via a natural modification of the greedy rule: at
iteration $i$ select $v_{i}$ that maximizes the cost-benefit ratio,
\begin{equation}\label{eq:cost-benefit-rule}
    v_{i} := \argmax_{v \in V\setminus S_{i-1}} \frac{\Delta(v | S_{i-1})}{c(v)}.
\end{equation}
This cost-benefit approach can also perform arbitrarily poorly, because the
cost-benefit ratio hides the total value of a sensor.  One can recover an
approximation ratio by performing both approaches. Namely, we use the standard
greedy approach to obtain a solution $S_1$ and the cost-benefit approach to
obtain $S_2$. We then choose the best of $S_1$ and $S_2$ to ensure an
approximation ratio of $(1 - e^{-1}) / 2$; see~\cite{KrauseGuestrin2005}. 
This procedure is organized in Algorithm~\ref{alg:greedy-knapsack}.  Note that
Algorithm~\ref{alg:greedy-knapsack} requires $\mathcal{O}(|V|^2)$ marginal
gain computations.
This theoretical worst-case cost can often be mitigated by
using lazy evaluations~\cite{Minoux1978}.  Namely, rather than computing the
marginal gains for every remaining sensor, we instead search the remaining
sensors in a specific order based on the previously computed marginal gains.

\begin{algorithm}
\caption{Greedy and cost-benefit greedy selection}
\label{alg:greedy-knapsack}
\begin{algorithmic}[1]
    \REQUIRE{$V$ with $|V|=d$; monotone and submodular $f:2^V\to\mathbb{R}$; cost function $c:V\to \mathbb{R}_+$; budget $B > 0$}
\ENSURE{$S\subset V$ with $c(S)\le B$.}

\STATE $S_1 \leftarrow \emptyset$
\WHILE{$\exists v \in V \setminus S_1$ such that $c(S_1)+c(v)\le B$}
    \STATE $v^* \leftarrow 
    \argmax\big\{
    \Delta(v \mid S_1) :
    v\in V\setminus S_1,\ c(S_1)+c(v)\le B
    \big\}$\
    \STATE $S_1 \leftarrow S_1 \cup \{v^*\}$
\ENDWHILE

\STATE $S_2 \leftarrow \emptyset$
\WHILE{$\exists v \in V \setminus S_2$ such that $c(S_2)+c(v)\le B$}
    \STATE $v^* \leftarrow
    \argmax\big\{
    \frac{\Delta(v \mid S_2)}{c(v)} :
    v\in V\setminus S_2,\ c(S_2)+c(v)\le B
    \big\}$
    \STATE $S_2 \leftarrow S_2 \cup \{v^*\}$
\ENDWHILE

\RETURN $\argmax\{f(S_1),f(S_2)\}$
\end{algorithmic}
\end{algorithm}

\section{Linear inverse problems}\label{sec:linOED}
In this section, we consider multi-type sensor placement  
for linear Bayesian inverse problems. 
The formulation of the corresponding
OED problem is presented in Section~\ref{subsec:formulation}.
We discuss an efficient approach for computing the OED objective 
in Section~\ref{subsec:fastEIG}. 
Subsequently, 
in Section~\ref{subsec:stoch_greedy}, we present our proposed stochastic greedy approach for
solving the optimization problem.

\subsection{Problem formulation}\label{subsec:formulation}
Suppose we have access to $K$ sensor types.
Let sensor type $i \in \{1,\ldots,K\}$ have $\nSens_i$ candidate locations.
For each $i$, the associated observation model is
\begin{equation}\label{eq:Fi}
    \data_i = \lfwd_i\params+\noise_i,\quad\noise_i\sim\mathcal{N}(\overline{\noise}_i,\Cnoisei).
\end{equation}
Here, $\data_i\in\mathbb{R}^{\nSens_i}$ is a vector of sensors
measurements of the $i$th type, $\lfwd_i:\mathbb{R}^{n}\to\mathbb{R}^{\nSens_i}$
is the $i$th parameter-to-observable (PTO) map, and $\noise_i$ is again measurement noise.  
The different sensor types may measure different physical quantities or have 
different levels of accuracy.  Sensor types that measure the same physical
quantity with different levels of accuracy share the same PTO map.  Thus, the
$\lfwd_i$'s in~\eqref{eq:Fi} are not necessarily distinct.  Note that, as
before, we assume each sensor yields a single measurement.  

To formulate the OED problem, we first introduce the \emph{composite} PTO map, 
\begin{equation}\label{eq:fullmap}
    \FF \coloneqq \begin{bmatrix}
        \lfwd_1^\top\;
        \lfwd_2^\top\;
\cdots\;
\lfwd_K^\top
\end{bmatrix}^\top.
\end{equation}
Note that $\FF$ maps the inversion parameter $\params$ to a vector 
in $\R^{\nDataAll}$, where $\nDataAll = \sum_{i=1}^K{\nSens_i}$.
With  the assumption of independent noise across sensor types, we model the full data
vector $\data \in \mathbb{R}^{\nDataAll}$ as
\[
\data = \FF\params + \noise, \quad 
\noise \sim \mathcal{N}(\overline{\noise},\Cnoise),
\]
where $\overline{\noise} = \begin{bmatrix}\overline{\noise}_1^\transp&\cdots&\overline{\noise}_K^\transp\end{bmatrix}^\transp$ and
$\Cnoise = \mathrm{blockdiag}(\Cnoise^1,\cdots,\Cnoise^K)$.

Let $\designSet$ be a set of cardinality $d$ that indexes all candidate sensors. 
As in Section~\ref{subsec:submodular-optimization}, we identify a sensor with its index $v\in\designSet$, i.e., its corresponding row index in $\FF$.
Each $v \in V$ has an associated cost $c(v) > 0$ based on its sensor type.
The design-dependent data model is 
\begin{equation}\label{eq:design_dep_model}
 \data(\designSubset) = \Weight(\designSubset)(\FF \params + \noise),
 \quad 
\designSubset \subset \designSet.
 \end{equation}
Here, $\Weight(S) \in \{0,1\}^{|S| \times d}$
is a selection matrix, which is obtained 
by selecting the rows of the $d \times d$ identity matrix specified by $\designSubset$.

For a prespecified budget $B > 0$, 
the OED problem seeks a design $\designSubset \subset \designSet$ with $c(S)
\leq B$ that maximizes the EIG.
Letting $\widetilde{\FF} =
\FF\Cmm^{{1}/{2}}$, 
the design-dependent EIG for the present multi-type OED problem takes the form 
\begin{equation}\label{eq:mfEIG}
\mfEIG(\designSubset) \defeq \frac{1}{2} \log \det \big( \widetilde{\FF}^{\transp}\sensPrec(\designSubset)\widetilde{\FF} + \bvec{I} \big), 
\quad \designSubset \subset \designSet,
\end{equation}
with $\sensPrec(\designSubset)$ in~\eqref{eq:Gamma-S}. 
Subsequently, we state the multi-type OED problem precisely as follows:
\begin{equation}\label{eq:optim_main}
   \max_{\designSubset \subset \designSet} 
   \ \mfEIG(\designSubset)  \quad 
   \text{subject to} \ \sum_{v \in \designSubset}c(v) \leq B.
\end{equation}
Solving this problem requires overcoming the following 
fundamental challenges: (i) expensive function evaluations; and (ii) a 
knapsack constrained binary optimization problem. The remainder of this 
section is devoted to devising methods that 
address these challenges. 

\subsection{Fast computation of $\mfEIG$}\label{subsec:fastEIG}
Evaluating~\eqref{eq:mfEIG} for a given $\designSubset \subset
\designSet$ requires computing the determinant of an operator on the discretized
parameter space.  In infinite-dimensional inverse problems governed by PDEs, the
dimension $n$ of this space is typically in the order of thousands to millions.
Often, $n$ is significantly larger than the measurement dimension $d$.  In such
cases, it is advantageous to reformulate the EIG criterion so that 
we only need determinants of operators on the measurement space. 
Examples of this approach to single-type sensor placement 
problems with continuous weights can be found in~\cite[Section 4.5]{Alexanderian2021}.

The discrete nature of~\eqref{eq:mfEIG} allows us to further reduce the
dimension from $d$ to the number of selected sensors $k$.  Since $k$ is often
much smaller than $d$, the expression for $\mfEIG$ provided in the following
result offers additional computational savings.  
\begin{proposition}
    The following holds
    \begin{equation}\label{eq:mfEIG_insideout}
    \mfEIG(\designSubset) 
    = \frac{1}{2} \log \Biggl[ \frac{\det \big( \Weight(\designSubset) ( \widetilde{\FF}\widetilde{\FF}^{\transp} + \Cnoise ) \Weight(\designSubset)^{\transp} \big) }{\det \left( \Weight(\designSubset) \Cnoise \Weight(\designSubset)^{\transp} \right)} \Biggr],
    \quad \designSubset\subset\designSet.
    \end{equation}
\end{proposition}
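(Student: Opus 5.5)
The identity follows from Sylvester's determinant identity combined with a Schur-complement-type factorization. Throughout, abbreviate $\Weight = \Weight(\designSubset)$ and $\bvec{G} = \Weight\Cnoise\Weight^{\transp} \in \R^{k\times k}$ with $k = |\designSubset|$. Since $\Cnoise$ is symmetric positive definite and $\Weight$ has full row rank (its rows are distinct rows of the identity), $\bvec{G}$ is symmetric positive definite. In particular $\bvec{G}^{-1}$ and $\bvec{G}^{1/2}$ exist and $\det\bvec{G}>0$, so the right-hand side of~\eqref{eq:mfEIG_insideout} is well defined.

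\textbf{Step 1 (move to measurement space).} By~\eqref{eq:Gamma-S}, $\sensPrec(\designSubset) = \Weight^{\transp}\bvec{G}^{-1}\Weight$. Set $\bvec{A} = \bvec{G}^{-1/2}\Weight\widetilde{\FF} \in \R^{k\times n}$. Then
\[
\widetilde{\FF}^{\transp}\sensPrec(\designSubset)\widetilde{\FF} = \bvec{A}^{\transp}\bvec{A}.
\]
Sylvester's identity $\det(\bvec{I}_n + \bvec{A}^{\transp}\bvec{A}) = \det(\bvec{I}_k + \bvec{A}\bvec{A}^{\transp})$ turns the $n\times n$ determinant in~\eqref{eq:mfEIG} into a $k\times k$ one:
\[
\det\big(\bvec{I}_k + \bvec{G}^{-1/2}\Weight\widetilde{\FF}\widetilde{\FF}^{\transp}\Weight^{\transp}\bvec{G}^{-1/2}\big).
\]

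\textbf{Step 2 (factor out $\bvec{G}$).} Write $\bvec{I}_k = \bvec{G}^{-1/2}\bvec{G}\,\bvec{G}^{-1/2}$. The matrix inside the determinant then becomes
\[
\bvec{G}^{-1/2}\big(\Weight\widetilde{\FF}\widetilde{\FF}^{\transp}\Weight^{\transp} + \Weight\Cnoise\Weight^{\transp}\big)\bvec{G}^{-1/2} = \bvec{G}^{-1/2}\,\Weight\big(\widetilde{\FF}\widetilde{\FF}^{\transp}+\Cnoise\big)\Weight^{\transp}\,\bvec{G}^{-1/2}.
\]
By multiplicativity of the determinant, its determinant equals
\[
\frac{\det\big(\Weight(\widetilde{\FF}\widetilde{\FF}^{\transp}+\Cnoise)\Weight^{\transp}\big)}{\det\bvec{G}}.
\]
Taking $\tfrac12\log$ gives~\eqref{eq:mfEIG_insideout}.

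There is no genuine obstacle; the only points needing care are the invertibility of $\bvec{G}$ (established above) and the edge case $\designSubset=\emptyset$. For $\designSubset=\emptyset$ both sides equal zero, with the convention that the determinant of a $0\times 0$ matrix is $1$. To avoid square roots, one can instead apply $\det(\bvec{I}+\bvec{X}\bvec{Y})=\det(\bvec{I}+\bvec{Y}\bvec{X})$ directly with $\bvec{X} = \widetilde{\FF}^{\transp}\Weight^{\transp}$ and $\bvec{Y} = \bvec{G}^{-1}\Weight\widetilde{\FF}$. This gives $\det(\bvec{I}_k + \bvec{G}^{-1}\Weight\widetilde{\FF}\widetilde{\FF}^{\transp}\Weight^{\transp})$, and factoring out $\bvec{G}^{-1}$ on the left yields the same ratio.
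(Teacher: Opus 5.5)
Your proof is correct and takes essentially the same route as the paper. Both use Sylvester's determinant identity to reduce to a $k\times k$ determinant and then rewrite $\bvec{I}_k$ in terms of $\bvec{G}=\Weight(\designSubset)\Cnoise\Weight(\designSubset)^{\transp}$ to factor out $\det\bvec{G}$; the paper factors $\bvec{G}^{-1}$ on the right, whereas you use a symmetric $\bvec{G}^{-1/2}$ splitting, or factor on the left in your square-root-free variant.
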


\begin{proof}
We note 
\begin{align}
\mfEIG(\designSubset) &= \frac{1}{2} \log \det \Bigl( \widetilde{\FF}^{\transp}\Weight(\designSubset)^\top\left( \Weight(\designSubset)\Cnoise\Weight(\designSubset)^{\transp}\right)^{-1}\Weight(\designSubset)\widetilde{\FF} + \bvec{I}_{\nparams} \Bigr) \notag\\
& = \frac{1}{2} \log \det \Bigl( \Weight(\designSubset)\widetilde{\FF}\widetilde{\FF}^{\transp}\Weight(\designSubset)^{\transp}\left( \Weight(\designSubset)\Cnoise\Weight(\designSubset)^{\transp}\right)^{-1} + \bvec{I}_{\nChosen} \Bigr) \label{eq:mfEIG_insideout_A}\\
&=  \frac{1}{2} \log \Biggl[ \frac{\det \big( \Weight(\designSubset) ( \widetilde{\FF}\widetilde{\FF}^{\transp} + \Cnoise ) \Weight(\designSubset)^{\transp} \big) }{\det \left( \Weight(\designSubset) \Cnoise \Weight(\designSubset)^{\transp} \right)} \Biggr]. \label{eq:mfEIG_insideout_B} 
\end{align}
The first equality follows from Sylvester's determinant identity; and the
second one
follows by rewriting
$\bvec{I}_{\nChosen}=(\Weight(\designSubset)\Cnoise\Weight(\designSubset)^{\transp})(\Weight(\designSubset)\Cnoise\Weight(\designSubset)^{\transp})^{-1}$.
\end{proof}
\begin{rmk}
A direct log determinant computation via a triangular factorization may be
required when computing~\eqref{eq:mfEIG_insideout_B}
to avoid a potential underflow error. 
\end{rmk}

One can relieve the greedy algorithm from PDE solves by precomputing
$\widetilde{\FF}\widetilde{\FF}^\top$.  This requires an expensive
initial computation.  In the case of a single-type sensor
placement problem, one can also compute a low-rank singular
value decomposition of $\widetilde{\FF}$. We note that this second approach
introduces some approximation errors; see, e.g.,~\cite{AlexanderianSaibaba2018}.
This is suitable for large-scale ill-posed inverse problems where
$\widetilde{\FF}$ exhibits rapid spectral decay. Exploration of such an approach
for the case of multi-type sensor placement problems where $\widetilde{\FF}$ is
the composite forward operator is an interesting line of inquiry for further
work.

An alternative approach to implementing Algorithm~\ref{alg:greedy-knapsack} is
to directly compute the marginal gains rather than re-evaluate the objective.
In the case of a diagonal $\Cnoise$, a direct marginal gain
formula based on the Sherman--Morrison--Woodbury identity can be derived; 
see~\cite{RamonSarnoskiTumuluriEtAl2026}.
This enables fast marginal gain computations at the cost of 
additional memory overhead.  This approach also requires access to the rows of
$\widetilde\FF$.  If $\widetilde\FF$ is too large to store in memory, then an
appropriate caching strategy may reduce the frequency new rows are accessed.

\subsection{Submodular optimization with knapsack constraints}\label{subsec:stoch_greedy}
The deterministic algorithm for knapsack constrained submodular
maximization requires $\mathcal{O}(d^{2})$ marginal gain computations, where $d$
is the number of candidate sensor locations.  When $d$ is large and marginal gain
evaluations are expensive, this cost is prohibitive.  We propose a stochastic
cost-benefit greedy algorithm to reduce this computational cost.  As noted
in the introduction, this approach is motivated by the stochastic variant of the
standard greedy algorithm~\cite{MirzasoleimanBadanidiyuruKarbasiEtAl2014}.  The main idea is to
consider only a subset of the remaining sensors at each iteration. In this
context, an \emph{approximation parameter} $\epsilon$ is used to control the quality of
the approximation. In what follows, we show that this strategy, in expectation, achieves a 
problem dependent approximation ratio.

Assume without a loss of generality that $c(v) \ge 1$ for every $v \in V$.
This can be done via normalization, and does not affect the number of samples per iteration.
We additionally assume that $f(\emptyset)=0$.
Thus, 
$f$ is non-negative due to monotonicity.
We define 
\begin{equation}\label{eq:knapsack-problem-opt-val}
    f^* \defeq \max\{f(S)\,:\,S\subset V, \, c(S)\le B\}.
\end{equation}

The key steps in the proposed stochastic cost-benefit greedy algorithm are as
follows: Begin with the empty set $S_{0} = \emptyset$.  Then, at
iteration $i \ge 1$, we consider a random subset $R_{i} \subset V \backslash
S_{i-1}$ with $T$ sensors and determine a sensor $v_{i}^{*}\in R_{i}$ that
maximizes the weighted marginal gain $\Delta(v | S_{i-1}) / c(v)$ for $v \in R_{i}$.
Subsequently, we set $S_{i} = S_{i-1} \cup \{v^{*}_{i}\}$.
The process is repeated until no remaining sensor can be added.

Let $\epsilon>0$ denote the approximation parameter.
Smaller values of $\epsilon$ produce, in expectation, a better approximation ratio at the cost of additional marginal gain computations.
At iteration $i\ge 1$, let $\widetilde V_{i-1} = \{v \in V\setminus S_{i-1}:c(S_{i-1}) + c(v)\le B\}$.
We then sample $v \in \widetilde V_{i-1}$ with probability $c(v) / c(\widetilde V_{i-1})$.
Our analysis requires
\begin{equation}\label{eq:T-definition}
T = {\frac{c(V)}{B} \log(1/\epsilon)}.
\end{equation}
In practice, $T$ is set to the ceiling of this quantity.
The pseudocode for the algorithm is presented in Algorithm~\ref{alg:stochastic-cb-greedy}.
We can also exploit submodularity by using lazy 
evaluations to reduce the number of marginal gain
computations per inner iteration.
\begin{algorithm}
\caption{Stochastic Cost-Benefit Greedy Algorithm}
\label{alg:stochastic-cb-greedy}
\begin{algorithmic}[1]
\REQUIRE $V$ with $|V|=d$; monotone submodular function
$f:2^V\to\mathbb{R}_+$; cost function $c:V\to\mathbb{R}_+$; budget
$B>0$; approximation parameter $\epsilon>0$
\ENSURE $S\subseteq V$ with $c(S)\le B$

\STATE $S \leftarrow \emptyset$
\STATE $T \leftarrow c(V)\log(1/\epsilon)/B$

\WHILE{$\exists v \in V\setminus S$ such that $c(S)+c(v)\le B$}
    \STATE $R \leftarrow$ a random subset of
    $\{v\in V\setminus S : c(S)+c(v)\le B\}$ with $|R|=T$
    \STATE $v^* \leftarrow
    \argmax\big\{
    \frac{\Delta(v\mid S)}{c(v)} : v\in R
\big\}$\label{alg:best_element}
    \STATE $S \leftarrow S\cup\{v^*\}$
\ENDWHILE

\RETURN $S$
\end{algorithmic}
\end{algorithm}

We begin the analysis of Algorithm~\ref{alg:stochastic-cb-greedy} by considering
the following result that applies to any arbitrary subset $S\subset V$ after one
iteration of the inner loop.
\begin{lemma}[Expected Incremental Gain]\label{lemma:cb-greedy-expected-iterative-gain}
    Let $f$ be monotone and submodular, and satisfy $f(\emptyset)=0$.
Let $S \subset V$ be an arbitrary subset.
Suppose we sample with replacement $T = c(V)\log(1/\epsilon) / B $ sensors from $V \backslash S$, where the probability of sampling $v \in V \backslash S$ is $c(v) / c(V \backslash S)$.
If $v^{*}= \argmax_{v \in R} \Delta(v | S) / c(v)$, then the following estimate holds.
\begin{equation}
    \mathbb{E}\big[f(S\cup\{v^{*}\}) - f(S)\big] = \mathbb{E} \big[\Delta(v^{*}|S)\big]
    \ge \frac{1 - \epsilon}{B}(f^* - f(S))
\end{equation}
\end{lemma}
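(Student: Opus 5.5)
The plan is to adapt the classical stochastic-greedy argument of Mirzasoleimanet al.\ to cost-weighted sampling. Fix $S$ and let $S^*$ attain $f^*$ in~\eqref{eq:knapsack-problem-opt-val}. Set $A \coloneqq S^*\setminus S$.

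If $f^*\le f(S)$, the claim is trivial: monotonicity gives $\Delta(v^*\mid S)\ge 0$. So I will assume $f^* > f(S)$. Monotonicity then forces $A\neq\emptyset$. Monotonicity and submodularity give the standard estimate
\[
f^* - f(S) \le f(S\cup S^*) - f(S) \le \sum_{a\in A}\Delta(a\mid S).
\]
I will also use $c(A)\le c(S^*)\le B$ and $c(V\setminus S)\le c(V)$.

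\textbf{Step 1 (probability of hitting $A$).} Each of the $T$ independent draws lands in $A$ with probability $p = c(A)/c(V\setminus S)\ge c(A)/c(V)$. Hence
\[
\Pr[R\cap A\neq\emptyset] = 1-(1-p)^T \ge 1 - e^{-T c(A)/c(V)} = 1-\epsilon^{c(A)/B},
\]
using the choice of $T$ in~\eqref{eq:T-definition}. The map $x\mapsto 1-\epsilon^{x/B}$ is concave on $[0,B]$, equals $0$ at $x=0$, and equals $1-\epsilon$ at $x=B$. Since $c(A)\in(0,B]$, concavity gives $\Pr[R\cap A\neq\emptyset]\ge (1-\epsilon)\,c(A)/B$. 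This step is where the cost-proportional sampling and the factor $c(V)/B$ in $T$ are used.

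\textbf{Step 2 (conditional quality of $v^*$).} On the event $R\cap A\neq\emptyset$, let $a$ be the first draw that falls in $A$. Conditioned on falling in $A$, a draw is distributed with probability $c(a)/c(A)$ on $A$, so $a$ has this law. Because $v^*$ maximizes the ratio over $R\ni a$, we get $\Delta(v^*\mid S)/c(v^*)\ge \Delta(a\mid S)/c(a)$. The normalization $c\ge 1$ then gives $\Delta(v^*\mid S)\ge \Delta(v^*\mid S)/c(v^*)$, since the ratio is nonnegative. Taking the conditional expectation,
\[
\mathbb{E}\big[\Delta(v^*\mid S)\,\big|\,R\cap A\neq\emptyset\big] \ge \sum_{a\in A}\frac{c(a)}{c(A)}\,\frac{\Delta(a\mid S)}{c(a)} = \frac{1}{c(A)}\sum_{a\in A}\Delta(a\mid S) \ge \frac{f^*-f(S)}{c(A)}.
\]

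\textbf{Step 3 (combine).} On the complementary event, $\Delta(v^*\mid S)\ge 0$. Therefore
\[
\mathbb{E}[\Delta(v^*\mid S)] \ge (1-\epsilon)\frac{c(A)}{B}\cdot\frac{f^*-f(S)}{c(A)} = \frac{1-\epsilon}{B}\big(f^*-f(S)\big).
\]
The factor $c(A)$ cancels, which is exactly why cost-weighted sampling is used.

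The main obstacle I expect is the conditioning in Step 2. It needs care to argue that the first hit in $A$ is cost-weighted on $A$ independently of the other draws; this holds because the draws are i.i.d.\ with replacement. A second point is that $c(v^*)$ might be large, and this is handled only by the normalization $c\ge 1$.
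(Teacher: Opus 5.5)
Your proposal is correct and follows essentially the same route as the paper: the same lower bound by restricting to the event $R\cap(S^*\setminus S)\neq\emptyset$, the same concavity bound on the hitting probability (Lemma~\ref{lemma:intersection-probability}), and the same cost-weighted averaging combined with submodularity and $c\ge 1$. The differences are minor. You use the first draw landing in $S^*\setminus S$ where the paper picks uniformly from $R\cap(S^*\setminus S)$ via the binomial computation of Lemma~\ref{lemma:sample-remaining-element}. You also handle the degenerate case $f^*\le f(S)$ (which guarantees $c(S^*\setminus S)>0$) explicitly, whereas the paper leaves it implicit.
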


\begin{proof}
    Let $S^*$ satisfy $f(S^*) = f^*$, where $f^*$ is as in~\eqref{eq:knapsack-problem-opt-val}.
    We denote by $F$ the event that $R \cap (S^{*} \backslash S) \ne \emptyset$. Then,
    \begin{equation}\label{eq:expectation-decomp-estimate}
        \begin{aligned}
            \mathbb{E}\big[\Delta (v^{*}|S)\big]
        &=
        \mathbb{E}\big[\Delta (v^{*}|S) \given R \in F\big] \mathbb{P}(R \in F)
        + \mathbb{E}\big[\Delta (v^{*}|S)\given R \in F^{c}\big] \mathbb{P}(R \in F^{c})\\
        &\ge \mathbb{E}\big[\Delta (v^{*}|S)\given R \in F\big] \mathbb{P}(R \in F).
        \end{aligned}
    \end{equation}

    Suppose that $R \in F$ and consider $v^{*}$ determined by Line~\ref{alg:best_element} of Algorithm~\ref{alg:stochastic-cb-greedy}.
    Next, we define a random variable $A$ with values in $S^{*} \setminus S $ by sampling uniformly $A$ from $R \cap (S^*\setminus S)$.
    That is, if $a \in S^*\setminus S$ appears $j$ times in $R\cap(S^*\setminus S)$, then the probability of $A = a$ is $j / m$ with $m = |R\cap(S^*\setminus S)|$.
    Since $A \in R\cap(S^*\setminus S)$, we then trivially have $\Delta(v^*|S)/c(v)\ge\Delta(A|S)/c(A)$.
    Then by Lemma~\ref{lemma:sample-remaining-element}, $\mathbb{P}(A = a\,|\,R \in F) = c(a) / c(S^{*}\backslash S)$ for $a \in S^{*}\backslash S$.
    Thus, 
    \begin{equation}\label{eq:expectation-intermediate-inequality}
        \begin{aligned}
            \mathbb{E}\big[\Delta (v^{*}|S)\given R \in F\big]
            &\ge \mathbb{E}\big[\Delta (v^{*}|S)/c(v)\given R \in F\big] \ge \mathbb{E}\big[{\Delta(A | S)}/{c(A)} \given R\in F\big]\\
            &= \sum_{a \in S^{*}\backslash S}^{} \frac{\Delta(a | S)}{c(a)} \frac{c(a)}{c(S^{*}\backslash S)}
            = \sum_{a \in S^{*}\backslash S}^{} \frac{\Delta(a | S)}{c(S^{*}\backslash S)}.
        \end{aligned}
    \end{equation}

    Let $S^{*}\backslash S = \{a_{i}\}_{i=1}^{m}$. Then, since $f$ is a monotone submodular function, 
    \begin{multline*}
        f(S^{*}) - f(S)
        \le f(S^{*} \cup S) - f(S)
        = f(S \cup \{a_{i}\}_{i=1}^{m}) - f(S)\\
        = \sum_{j=1}^{m} \big(f(S \cup \{a_{i}\}_{i=1}^{j}) - f(S \cup \{a_{i}\}_{i=1}^{j-1})\big)
        = \sum_{j=1}^{m} \Delta( a_{j} | S \cup \left\{a_{i}\right\}_{i=1}^{j-1})
        \le \sum_{j=1}^{m} \Delta(a_{j} | S).
    \end{multline*}
    Combining this with~\eqref{eq:expectation-intermediate-inequality} yields
    \begin{equation*}
        \mathbb{E}\left[\Delta(v | S)\given R \in F\right] \ge \frac{f(S^{*}) - f(S)}{c(S^{*} \backslash S)}.
    \end{equation*}
    To get the desired inequality, we utilize this estimate and apply Lemma~\ref{lemma:intersection-probability} in~\eqref{eq:expectation-decomp-estimate} to find
    \begin{align*}
        \mathbb{E}\left[\Delta(v^{*}|S)\right]
        & \ge \mathbb{E}\left[\Delta(v^{*}|S) \given R \in F\right] \mathbb{P}(R \in F)\\
        &\ge \frac{f(S^{*}) - f(S)}{c(S^{*}\backslash S)} \cdot \frac{(1-\epsilon)c(S^{*}\backslash S)}{B}
        = (1 - \epsilon) \frac{f(S^{*}) - f(S)}{B}.
    \end{align*}
\end{proof}

We next present the main result of this section, which
states how these expected incremental gains accumulate.
\begin{theorem}[Approximation Ratio]\label{thm:cummulative-approx-ratio}
    Suppose that $f$ is monotone and submodular, and satisfies $f(\emptyset)=0$.
    If Algorithm~\ref{alg:stochastic-cb-greedy} terminates after $k$ iterations, 
    then
    \begin{equation}\label{eq:iteration-approximation-ratio}
        \mathbb{E}\big[f(S_{k})\big] \ge \big\{1 - \exp\big(- {k(1-\epsilon)}/{B}\big)\big\} f^*
    \end{equation}
    In particular, if $\overline C = \max_{v \in V} c(v)$ then
    \begin{equation}\label{eq:approximation-ratio}
        \mathbb{E}\big[f(S_{k})\big] \ge \big\{1 - \exp\big(- {(1-\epsilon)}/{\overline C}\big)\big\}f^*
    \end{equation}
    Moreover, the number of marginal gain computations is $\mathcal{O}(c(V)\log(1/\epsilon))$.
\end{theorem}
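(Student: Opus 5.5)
We will iterate Lemma~\ref{lemma:cb-greedy-expected-iterative-gain} along the run of Algorithm~\ref{alg:stochastic-cb-greedy}, in the standard way for greedy submodular maximization. Let $S_0=\emptyset, S_1,\dots,S_k$ be the random sets produced by the algorithm, and set $g_i \defeq f^* - f(S_i)$.

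Fix $i$ and condition on $S_{i-1}$. The lemma is stated for sampling from $V\setminus S$. In the algorithm, sampling is restricted to the feasible set $\widetilde V_{i-1}$. The first step is therefore to check that the argument of the lemma still goes through with $V\setminus S$ replaced by $\widetilde V_{i-1}$, with the comparison made against $S^*\cap \widetilde V_{i-1}$. The intersection-probability estimate only needs $c(\widetilde V_{i-1})\le c(V)$, so $T$ is still large enough. Granting the lemma in this form, we get
\[
\mathbb{E}\big[f(S_i)-f(S_{i-1}) \,\big|\, S_{i-1}\big] \ge \frac{1-\epsilon}{B}\, g_{i-1}.
\]
Equivalently, $\mathbb{E}[g_i\mid S_{i-1}] \le \big(1-\tfrac{1-\epsilon}{B}\big) g_{i-1}$. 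Taking total expectations and using the tower property gives
\[
\mathbb{E}[g_k] \le \Big(1-\tfrac{1-\epsilon}{B}\Big)^k f^*.
\]
Applying $1-x\le e^{-x}$ then yields \eqref{eq:iteration-approximation-ratio}.

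For \eqref{eq:approximation-ratio}, we need $k/B\ge 1/\overline C$, i.e.\ the algorithm performs at least $B/\overline C$ iterations. The algorithm stops only when no remaining sensor fits. Each sensor costs at most $\overline C$, so termination requires either $c(S_k) > B-\overline C$ or $S_k=V$. In the first case $k\,\overline C \ge c(S_k) > B-\overline C$. In the second case $f(S_k)=f(V)\ge f^*$ by monotonicity, and the bound is trivial. The first case only gives $k > B/\overline C - 1$, which is short of $k\ge B/\overline C$. My plan is to first try to argue that at least one iteration happens (since $c(v)\ge 1$ and we assume some sensor fits), so that $k\ge 1$. Then, using the normalization $c(v)\ge 1$ and the monotonicity of $x\mapsto 1-e^{-x}$, I would use $k\ge1$ to get $k(1-\epsilon)/B \ge (1-\epsilon)/B$. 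The required comparison $1/B$ versus $1/\overline C$ holds when $\overline C\ge B$ but not in general. So I will instead aim for the exponent $(1-\epsilon)\lfloor B/\overline C\rfloor / B$, and reconcile it with the stated form by noting $\lfloor B/\overline C\rfloor/B \ge 1/\overline C - 1/B$. I would adjust the constant accordingly if the stated form turns out to need it. This step is the main obstacle.

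For the complexity bound, each iteration evaluates $T = c(V)\log(1/\epsilon)/B$ marginal gains. The number of iterations satisfies $k\le c(S_k)\le B$, because $c(v)\ge1$. The total is therefore at most $kT \le c(V)\log(1/\epsilon)$, which is $\mathcal{O}(c(V)\log(1/\epsilon))$.
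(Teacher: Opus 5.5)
Your outline follows the paper's route: condition on $S_{i-1}$, apply Lemma~\ref{lemma:cb-greedy-expected-iterative-gain}, iterate, use $1-x\le e^{-x}$, then lower-bound $k$. The step you ``grant'' is false, and it is exactly where the argument breaks. With sampling restricted to $\widetilde V_{i-1}$ and comparison against $S^*\cap\widetilde V_{i-1}$, the lemma's argument only gives $\mathbb{E}[\Delta(v^*\mid S_{i-1})]\ge\frac{1-\epsilon}{B}\sum_{a\in S^*\cap\widetilde V_{i-1}}\Delta(a\mid S_{i-1})$. Submodularity, however, bounds $f^*-f(S_{i-1})$ only by the sum over all of $S^*\setminus S_{i-1}$, which includes sensors that no longer fit, and those missing terms can carry the whole gap.

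For a concrete failure, take $a_1,\dots,a_{100}$ of cost $1$, a sensor $b$ of cost $100=B$, and the coverage function $f(A)=\frac{1}{50}\mathbf{1}[A\cap\{a_1,\dots,a_{100}\}\neq\emptyset]+\mathbf{1}[b\in A]$, so $f^*=1$. At the first step each $a_j$ has ratio $1/50$ and $b$ has ratio $1/100$. So unless every draw is $b$ (probability at most $2^{-T}$), some $a_j$ is chosen. After that $b$ never fits, every feasible sensor has zero marginal gain, and $\mathbb{E}[\Delta(v^*\mid S_1)]=0<\frac{1-\epsilon}{B}\cdot\frac{49}{50}$. For, e.g., $\epsilon=10^{-2}$, the run ends after $k=100$ iterations with $f(S_k)=1/50$, except with probability below $10^{-3}$. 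But \eqref{eq:iteration-approximation-ratio} with $k=100$ demands about $1-e^{-0.99}\approx 0.63$. The paper's proof has the same hole: it applies Lemma~\ref{lemma:cb-greedy-expected-iterative-gain}, which draws from all of $V\setminus S$ (infeasible sensors included), to an algorithm that draws only feasible sensors.

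Your objection to $k\ge B/\overline C$ is correct. Termination only yields $c(S_k)>B-\overline C$, and the paper simply asserts $k\ge B/\overline C$. In fact \eqref{eq:approximation-ratio} itself fails for $100$ unit-cost sensors, $f(A)=\mathbf{1}[v_0\in A]$, $B=1.9$, $\epsilon=1/2$. Then $T=\lceil 100\ln 2/1.9\rceil=37$, the algorithm stops after one step, and $\mathbb{E}[f(S_1)]=\mathbb{P}(v_0\in R)\le 0.37<1-e^{-1/2}\approx 0.393$. So neither inequality can be proved as written, and your detour via $\lfloor B/\overline C\rfloor/B\ge 1/\overline C-1/B$ cannot reach the stated exponent.

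The two issues are repaired together. Your adapted lemma is valid verbatim at every iteration $i$ with $S^*\setminus S_{i-1}\subseteq\widetilde V_{i-1}$. Let $\tau\le k+1$ be the first index at which this inclusion fails; if there is none, then $S^*\subseteq S_k$ and there is nothing to prove. At $\tau$, some $a\in S^*$ with $c(a)\le\overline C$ no longer fits, so $c(S_{\tau-1})>B-\overline C$ and hence $\tau-1\ge\lfloor B/\overline C\rfloor$. Set $\rho=1-\frac{1-\epsilon}{B}$. The process $\rho^{-\min(i,\tau-1)}\big(f^*-f(S_{\min(i,\tau-1)})\big)$ is a supermartingale, so you can stop your tower-property recursion at $\tau-1$. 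Together with monotonicity of $f$, this yields $\mathbb{E}[f(S_k)]\ge\{1-\rho^{\lfloor B/\overline C\rfloor}\}f^*$, which is exactly your fallback bound, now with a valid justification. Your operation count $kT\le c(V)\log(1/\epsilon)$ is fine.
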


\begin{proof}
We first show via induction that
\begin{equation}\label{eq:cummulative-gain-intermediate-inequality}
    \mathbb{E}[f(S_{k})] \ge \Big\{1 - \Big(1 - \frac{1-\epsilon}{B}\Big)^{k}\Big\}f^*.
\end{equation}

For $k=1$, we begin with $S_{0}= \emptyset$. By applying Lemma~\ref{lemma:cb-greedy-expected-iterative-gain}, we get
\begin{equation*}
    \mathbb{E}[f(S_{1})] = \mathbb{E}\big[f(S_{1}) - f(S_{0})\big]
    \ge \frac{1-\epsilon}{B} f(S^{*}) = \Big\{1 - \Big(1 - \frac{1-\epsilon}{B}\Big)\Big\} f^*.
\end{equation*}

Suppose that~\eqref{eq:cummulative-gain-intermediate-inequality} holds for $k\ge 1$.
Note that we can apply Lemma~\ref{lemma:cb-greedy-expected-iterative-gain} if we condition $S_k = S$ for $S\subset V$.
We then have
\begin{equation*}
\begin{aligned}
    \mathbb{E}\big[f(S_{k+1}) - f(S_{k})\big]
    &= \sum_{S \subset V} \mathbb{E}\big[f(S_{k+1}) - f(S_{k}) \,\big|\, S_{k} = S\big]\mathbb{P}(S_{k} = S)\\
    &\ge \sum_{S \subset V} \frac{1-\epsilon}{B}(f^* - f(S)) \mathbb{P}(S_{k} = S)= \Big(\frac{1-\epsilon}{B}\Big)\mathbb{E}\big[f^* - f(S_{k})\big].
\end{aligned}
\end{equation*}
And by applying~\eqref{eq:cummulative-gain-intermediate-inequality}, we find
\begin{align*}
    \mathbb{E}[f(S_{k+1})] 
    &= \mathbb{E}\big[f(S_{k+1}) - f(S_{k})\big] + \mathbb{E}[f(S_{k})]
    \ge \frac{1-\epsilon}{B} f^* + \Big(1 - \frac{1-\epsilon}{B}\Big) \mathbb{E}[f(S_{k})]\\
    &\ge \frac{1-\epsilon}{B} f^* + \Big(1 - \frac{1-\epsilon}{B}\Big)\Big(1 - \Big(1 - \frac{1-\epsilon}{B}\Big)^{k}\Big) f^*
    = \Big\{1 - \Big(1 - \frac{1-\epsilon}{B}\Big)^{k+1}\Big\} f^*.
\end{align*}
Thus, the statement holds for all $k \ge 1$.

The final result follows by an application of the inequality $1 - x \le e^{-x}$.
Moreover, since $k \ge B / \overline C$ we arrive at~\eqref{eq:approximation-ratio}.
\end{proof}

We first note that~\eqref{eq:approximation-ratio} depends on the maximum cost of a sensor.
However, this is a rough estimate based on taking a lower bound on the total number of iterations of Algorithm~\ref{alg:stochastic-cb-greedy}.
In practice, $k$ in~\eqref{eq:iteration-approximation-ratio} may be substantially larger than the lower bound $B / \overline{C}$.
Moreover, in the case of uniform costs, we recover the approximation ratio
from~\cite{MirzasoleimanBadanidiyuruKarbasiEtAl2014}.
Also, by using the Paley--Zygmund inequality~\cite{Petrov2007}, 
we can obtain the following tail lower
bound: 
\begin{corollary}[Tail Lower Bound]
    Suppose $f$ is monotone and submodular, and satisfies $f(\emptyset)=0$.
    If the conditions of Theorem~\ref{thm:cummulative-approx-ratio} hold, then for any $0<\delta<1$,
    \begin{equation*}
        \mathbb{P}\left[f(S_k)\ge (1-\delta)\left(1 - \exp\left(-\frac{1-\epsilon}{\bar C}\right)\right) f^*\right]
        \ge \delta^2
        \left(
            1 - 
            \exp\left(
                -\frac{1-\epsilon}{\bar C}
        \right)
        \right)^2
    \end{equation*}
\end{corollary}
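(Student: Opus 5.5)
We apply the Paley--Zygmund inequality to the nonnegative random variable $Z \defeq f(S_k)$. Here $Z\ge 0$ because $f$ is monotone with $f(\emptyset)=0$. Paley--Zygmund states that for $0<\delta<1$,
\[
\mathbb{P}\big[Z \ge \delta'\,\mathbb{E}[Z]\big] \ge (1-\delta')^2\,\frac{\mathbb{E}[Z]^2}{\mathbb{E}[Z^2]}, \qquad 0\le \delta'\le 1.
\]
We use it with $\delta' = 1-\delta$, so the prefactor becomes $\delta^2$. Set $\alpha \defeq 1-\exp(-(1-\epsilon)/\bar C)$, so that~\eqref{eq:approximation-ratio} of Theorem~\ref{thm:cummulative-approx-ratio} reads $\mathbb{E}[Z]\ge \alpha f^*$.

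The argument then has three steps.

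\emph{Step 1: the event.} Since $\mathbb{E}[Z]\ge \alpha f^*$, we have the event inclusion $\{Z\ge (1-\delta)\mathbb{E}[Z]\}\subset\{Z\ge (1-\delta)\alpha f^*\}$. Hence
\[
\mathbb{P}\big[Z\ge (1-\delta)\alpha f^*\big]\ \ge\ \mathbb{P}\big[Z\ge (1-\delta)\mathbb{E}[Z]\big]\ \ge\ \delta^2\,\frac{\mathbb{E}[Z]^2}{\mathbb{E}[Z^2]}.
\]

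\emph{Step 2: the second moment.} Every output $S_k$ of Algorithm~\ref{alg:stochastic-cb-greedy} satisfies the knapsack constraint $c(S_k)\le B$, so $0\le Z\le f^*$ almost surely by the definition~\eqref{eq:knapsack-problem-opt-val}. Therefore $\mathbb{E}[Z^2]\le (f^*)^2$.

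\emph{Step 3: the ratio.} Combining Steps 1 and 2 with $\mathbb{E}[Z]\ge \alpha f^*$ gives
\[
\frac{\mathbb{E}[Z]^2}{\mathbb{E}[Z^2]} \ge \frac{\alpha^2 (f^*)^2}{(f^*)^2}=\alpha^2,
\]
which yields the claimed bound $\delta^2\alpha^2$.

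\emph{Degenerate case.} If $f^*=0$, then $Z\equiv 0$ and the event $\{Z\ge 0\}$ has probability $1$, so the bound holds trivially. This case must be dispatched separately before dividing by $f^*$. If instead $\mathbb{E}[Z^2]=0$, then $Z=0$ almost surely, and the claim holds as long as the threshold is $0$, which again forces $f^*=0$ by Step 1.

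The only point needing care is Step 2. It is essential to use the almost-sure bound $Z\le f^*$, which holds because the algorithm never violates the budget, rather than any moment estimate. It is also worth noting that the proof of Theorem~\ref{thm:cummulative-approx-ratio} already uses $k\ge B/\bar C$, so $\alpha$ is a valid lower bound on $\mathbb{E}[Z]/f^*$ regardless of the random number of iterations $k$. Beyond this, the argument is a direct substitution into Paley--Zygmund.
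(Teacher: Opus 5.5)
Your proposal is correct and follows the same route as the paper's proof. You apply Paley--Zygmund with threshold $(1-\delta)\mathbb{E}[f(S_k)]$, pass to the target event via $\mathbb{E}[f(S_k)]\ge \alpha f^*$, and bound $\mathbb{E}[f(S_k)^2]\le (f^*)^2$; your feasibility argument giving $0\le f(S_k)\le f^*$ almost surely and your handling of the degenerate case $f^*=0$ just make explicit what the paper uses implicitly.
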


\begin{proof}
    By an application of Theorem~\ref{thm:cummulative-approx-ratio} and Jensen's inequality, we get
    \begin{align*}
        (f^*)^2 \left( 1 - e^{-(1-\epsilon)/\bar C}\right)^2
        \le \mathbb{E}[f(S_k)]^2
        \le \mathbb{E}[ f(S_k)^2]
        \le (f^*)^2.
    \end{align*}
    Also, multiplying~\eqref{eq:approximation-ratio} by $1-\delta$ yields $(1 -\delta) (1 - e^{-(1-\epsilon)/\bar C}) f^* \le (1-\delta) \mathbb{E}[f(S_k)]$.
    Thus,
    \begin{equation*}
        \{
            f(S_k) \ge (1-\delta)\mathbb{E}[f(S_k)]
        \}
        \subset
        \{
            f(S_k) \ge (1-\delta) (1 - e^{-(1-\epsilon)/\bar C}) f^*
        \},
    \end{equation*}
    which implies
    $\mathbb{P}\big[
            f(S_k) \ge (1 - \delta) \mathbb{E}[f(S_k)]
    \big]
    \le
        \mathbb{P}\big[
        f(S_k) \ge (1-\delta)(1 - e^{-(1-\epsilon)/\bar C}) f^*
    \big]$.
    We then apply the Paley--Zygmund inequality to find
    \begin{align*}
        \mathbb{P}\big[
            f(S_k) \ge (1 - \delta) \mathbb{E}[f(S_k)]
    \big]
    \ge \delta^2 \frac{\mathbb{E}[f(S_k)]^2}{\mathbb{E}[f(S_k)^2]}
    \ge \delta^2\left(1 - e^{-(1-\epsilon)/\bar C}\right)^2.
    \end{align*}
\end{proof}

\section{Nonlinear Inverse Problems}\label{sec:nonlinOED}
We now consider problems in which at least one of the parameter-to-observable
(PTO) maps associated with the different sensor types is nonlinear.
In this setting, the composite PTO map
takes the form
\begin{equation}\label{eq:fullmap_nonlinear}
\FF(\params)
:= \begin{bmatrix}
    \lfwd_1(\params)^\top\;
    \lfwd_2(\params)^\top\;
    \cdots \;
    \lfwd_K(\params)^\top
\end{bmatrix}^\top,
\end{equation}
where $\lfwd_i:\mathbb{R}^{n}\to\mathbb{R}^{\nSens_i}$ is the $i$th (potentially nonlinear) PTO map.

As discussed in Section~\ref{subsec:EIGBackground}, the expected information
gain generally does not admit a closed-form expression in this setting, making
the optimal sensor selection problem significantly more challenging.  There are
several sampling methods for approximating the EIG in nonlinear
settings~\cite{HuanJagalurMarzouk2024}. However, using such approaches within an
optimization loop becomes prohibitively expensive. To provide a tractable
approach, here we focus on approaches based on linearization of the PTO maps. We
account for the resulting model error using the Bayesian approximation error
(BAE) approach. 

In particular, following the recent work~\cite{KovalNicholson2025},
we outline a non-intrusive BAE-based framework that uses an observation model 
with a purely data-driven linear model---the \emph{\fo{} global linear model}. 
We elaborate this approach in our setting
in Section~\ref{subsec:nonlin_BAE}. Then, in Section~\ref{subsec:localvsglobal}, we show how the
proposed approach is connected to the commonly used idea of using an ensemble of
local linear models and deriving an approximate expected utility based on the
resulting Laplace approximations~\cite{Alexanderian2021,WuChenGhattas2023}.  
To provide further theoretical justification for the proposed approach, in 
Section~\ref{subsec:EIG_bound} we prove that the
EIG estimate based on the data-driven linear model provides a lower bound for
the exact EIG.

\subsection{Sensor placement via a global error-aware linear
surrogate}\label{subsec:nonlin_BAE}
For now we assume that all $\lfwd_i$'s are nonlinear. In practice, however, some $\lfwd_i$'s may be linear. We discuss this case in Section~\ref{subsec:nonlin_computations}.
We consider a \emph{global} linear surrogate $\mathbb{R}^{d \times n} \ni
\widehat{\FF} \coloneq
[\widehat{\lfwd}_1^{\top} \; \widehat{\lfwd}_2^{\top} \; \cdots \; \widehat{\lfwd}_K^{\top}]^{\top}$ of the full PTO
map $\FF$~\eqref{eq:fullmap_nonlinear}. 
While using such a
linear surrogate for Bayesian inversion is computationally advantageous, it does
entail approximation errors. As established in various
sources~\cite{LunzHauptmannTarvainenEtAl2021,HuttunenKaipio2007},
if the errors and uncertainties introduced by using an approximate forward model
are not accounted for, they can lead to biased reconstructions, overconfident
uncertainty estimates, and, in some cases, entirely misleading inferences.  
Consequently, failure to account for such errors can undermine any OED
procedure, leading to experimental setups that are grossly 
suboptimal for the exact problem~\cite{KovalNicholson2025,AlexanderianNicholsonPetra2024}.
The BAE
approach~\cite{KaipioSomersalo2007,KaipioKolehmainen2013} provides a principled
framework to mitigate these issues by treating the discrepancy between the
accurate and approximate forward models as a random variable, estimating its
statistics, and incorporating it into the likelihood formulation. 

While the BAE
approach is applicable to general, potentially nonlinear, approximate forward
models, we restrict our attention to linear ones. 
This approach begins by restating the relationship between the parameters of
interest and the observations in terms of the approximate model.
In our setting, the design-dependent observation model is
\begin{align}\label{eq: BAEderiv}
\data(\designSubset)=\Weight(\designSubset)\big(\FF(\params)+\noise\big)
=\Weight(\designSubset)\big(\widehat{\FF}\params+\eps(\params)+\noise\big)
=\Weight(\designSubset)\big(\widehat{\FF}\params+\totnoise(\params)\big),
\end{align}
where $\eps(\params):=\FF(\params)-\widehat{\FF}\params$ is termed the {\em
approximation error} and $\totnoise(\params):=\eps(\params)+\noise$ the {\em
total error}. The BAE approach models the joint law of  
$\eps(\params)$ and $\params$
as Gaussian,
\[
\begin{bmatrix} \eps \\ \params \end{bmatrix} \sim \mathcal{N}\left( \begin{bmatrix} \overline{\eps} \\ \overline{\params} \end{bmatrix}, \begin{bmatrix} \Cee & \Cem \\ \Cme & \Cmm \end{bmatrix} \right), 
\]
where $\Cem = \Cme^{\transp}$ denotes the cross-covariance between $\eps$ and
$\params$, and $\overline{\eps}$ and $\Cee$ denote the mean and covariance of
the approximation error $\eps(\params)$ as described in Section~\ref{subsec:notation}.
In the approximate model, 
the law of 
$\eps(\params)$ is replaced by the conditional
law of
$\eps \vert \params$. It is straightforward to show that  
$\eps|\params \sim \mathcal{N}(\overline{\eps}_\params,\Cegm)$,
with \begin{align}\label{eq: BAEstats}
\overline{\eps}_\params=\overline{\eps}+\Cem\Cmm^{-1}(\params-\overline{\params}) 
\quad\text{and}\quad
\Cegm=\Cee-\Cem\Cmm^{-1}\Cme.  
\end{align}

Under the Gaussian assumptions $\noise \sim
\mathcal{N}(\overline{\noise},\Cnoise)$ and $\params \sim
\pi_\params = 
\mathcal{N}(\overline{\params},\Cmm)$ with $\noise\perp\params$, the total
error is also conditionally Gaussian: 
$\totnoise|\params\sim\mathcal{N}(\overline{\totnoise}_\params,\Ctgm)$, with
$\overline{\totnoise}_\params=\overline{\eps}_\params+\overline{\noise}$ and
$\Ctgm=\Cegm+\Cnoise$. 
The error-aware likelihood induced by the approximate
model~\eqref{eq: BAEderiv} is 
\begin{equation}\label{eq:BAE_likelihood}
\likelihood^{\widehat{\FF}}(\designSubset) = \mathcal{N}\left(\Weight(\designSubset)(\widehat{\FF}\params+\overline{\totnoise}_{\params}),\Weight(\designSubset)\Ctgm \Weight(\designSubset)^{\transp}\right), 
\end{equation}
and the corresponding error-aware posterior is $\pipost{\sfwd}(\designSubset)
\propto \likelihood^{\sfwd}(\designSubset)
\pi_\params$, which is also Gaussian. 

The error-aware likelihood was shown to be independent of the choice of $\sfwd$
in the design-free setting~\cite[Theorem 1]{NicholsonPetraVillaEtAl2023}. 
This result was later extended to sensor design~\cite{KovalNicholson2025}; that is,
for any two linear models $\sfwd_1$ and $\sfwd_2$,
\[
\likelihood^{\sfwd_1}(\designSubset)=\likelihood^{\sfwd_2}(\designSubset), \quad 
\text{for all } S \subset V.
\]
Consequently, the posterior
$\pipost{\sfwd}(\designSubset)$ is also invariant to the choice of
linearization. 

An alternative interpretation of this invariance is that, regardless of the
choice of linear operator $\sfwd$, the BAE approach always produces the same
approximate observation model.  To see this, recall that the BAE approach
replaces the accurate model with
\[
\data(\designSubset) = \Weight(\designSubset)\left(\widehat{\FF}\params + \widetilde\totnoise(\params) \right), \quad \params \sim \mathcal{N}(\overline{\params},\Cmm), \quad \widetilde\totnoise(\params) \sim \mathcal{N}(\overline{\totnoise}_{\params},\Ctgm). 
\]
Since the conditional mean $\overline{\totnoise}_{\params} = \overline{\eps} +
\Cem\Cmm^{-1}(\params-\overline{\params})+\overline{\noise}$ is affine in
$\params$, the linear component can be absorbed into the forward operator.
This yields 
\begin{equation}\label{eq:GL_model}
\data(\designSubset) = \Weight(\designSubset)\left(\Ofwd\params+\widetilde{\totnoise}\right), \quad \params \sim \mathcal{N}(\overline{\params},\Cmm), \quad \widetilde{\totnoise} \sim \mathcal{N}(\overline{\totnoise},\Ctgm), \quad \params \perp \tilde{\totnoise}
\end{equation}
where $\Ofwd = \sfwd+\Cem\Cmm^{-1}$ and $\overline{\totnoise} = \overline{\eps}-\Cem\Cmm^{-1}\overline{\params}+\overline{\noise}$.  
Crucially, the choice of $\sfwd$ in the approximate model cancels; cf.\ the proof of~\cite[Theorem 1]{NicholsonPetraVillaEtAl2023}.
Hence, we can restate~\eqref{eq:GL_model} with 
\begin{align}\label{eq:0op}
\Ofwd = \Cgm\Cmm^{-1}, \quad \overline{\totnoise} = \overline{\FF}-\Ofwd\overline{\params}+\overline{\noise}, \quad \Ctgm = \Cnoise+\Cgg-\Cgm\Cmm^{-1}\Cgm^{\transp},
\end{align}
independently of $\sfwd$. We call the operator
$\Ofwd$ in~\eqref{eq:0op} 
the \fo{} global linear model.

The data model~\eqref{eq:GL_model} can equivalently be viewed as the
BAE approximation resulting from using the zero operator $\Op: \params \mapsto
\bvec{0}$ as the surrogate, in which case the approximation error is
$\eps(\params) = \FF(\params)$. Thus, we have 
$\Cem = \Cgm$, $\Cee = \Cgg$,
$\overline{\eps} = \overline{\FF}$, and the error-corrected forward operator
$\Ofwd = \Op+\Cgm\Cmm^{-1}$ recovers the \fo{} global linear model directly. 

The resulting error-aware posterior is $\pipost{\Op}(\designSubset) = \mathcal{N}( \mpost{\Op}(\designSubset), \Cpost{\Op}(\designSubset))$ with
\begin{equation}
    \mpost{\Op}(\designSubset) \!=\! \Cpost{\Op}(\designSubset)\Big(\Ofwd^{\transp}\sensPrec_{\totnoise}(\designSubset)\tilde{\data}+\Cmm^{-1}\overline{\params}\Big)
   \!\quad\!\text{and}\!\quad\!
    \Cpost{\Op}(\designSubset) \!=\! \Big(\Ofwd^{\transp}\sensPrec_{\totnoise}(\designSubset)\Ofwd+ \Cmm^{-1} \Big)^{-1}\!\!.\label{eq:BAEpostcov}
\end{equation}
Here $\sensPrec_{\totnoise}(\designSubset) \!=\!
\Weight(\designSubset)^{\transp}\!\big(\Weight(\designSubset) \Ctgm
\Weight(\designSubset)^{\transp} \big)^{-1}\Weight(\designSubset)$ and
$\tilde{\data} \!=\! \data - \overline{\totnoise}$, where $\data$ is the full-sensor
data. 

Using the multi-type EIG formulation for Gaussian priors and
posteriors~\eqref{eq:mfEIG_insideout}, defining the expected information gain
associated with the error-corrected posterior $\pipost{\Op}(\designSubset)$ is
straightforward. Introducing the prior-preconditioned operator,
$\tOfwd \coloneq
\Ofwd\Cmm^{\frac{1}{2}}$,
the EIG associated with the error-corrected posterior 
$\pipost{\Op}(\designSubset)$ is
\begin{equation}\label{eq:error_corrected_EIG}
\mfEIG^{\Op}(\designSubset) = \frac{1}{2}\log \det \left( {\tOfwd}^{\transp}\sensPrec_{\totnoise}(\designSubset){\tOfwd}+\bvec{I} \right), \quad \designSubset \subset \designSet. 
\end{equation}
We propose using this objective to guide the design selection for the original
nonlinear Bayesian inverse problem. In what follows, we refer to $\mfEIG^{\Op}$
as the \fo{} EIG. Before addressing practical implementation details, we relate
this framework to local linearization methods and establish further theoretical
justification for using~\eqref{eq:error_corrected_EIG} in the OED problem
formulation.

\subsection{Global versus local linearization for optimal sensor selection}
\label{subsec:localvsglobal}

In our proposed approach, we use a global linearization given by the
\fo{} forward operator $\Ofwd=\Cgm \Cmm^{-1}$; cf.~\eqref{eq:GL_model}. 
Although this operator appears to be an algebraic consequence of
using the BAE approach in a particular manner, as shown in this section, under
suitable regularity assumptions and a Gaussian prior, $\Ofwd$ admits a more
direct interpretation: it is the prior expectation of the Jacobian of the
nonlinear PTO map. This provides a connection with Laplace-based approaches to
OED for nonlinear inverse problems, in which the design objective is constructed
using an ensemble of local Gaussian approximations of the
posterior~\cite{Alexanderian2021,GoChen2026,WuChenGhattas2023}.

Since the discussion in this section applies more generally than the multi-type sensor
placement framework under study, for simplicity, we consider a generic 
nonlinear PTO map $\lfwd$.
For large-scale nonlinear inverse problems, a common approach to OED is to consider the
Laplace approximation to the posterior. Namely, given 
a data realization $\data$, we consider the approximation
$\pipost{}(\designSubset)\approx\pi_{\params \vert
\data}^{\rm LA} (\designSubset) =\mathcal{N}(\params_{\rm
MAP}(\data; \designSubset),\mat{\Gamma}_{\rm po}(\designSubset))$.
The maximum a posteriori (MAP) point $\params_{\rm MAP}(\data; \designSubset)$ is given by 
\[
    \dparmap{\data}(\designSubset):=\argmin_{\params\in \paramSpace}-\log{(\pipost{}(\designSubset))}.
\]
Further, the approximate posterior covariance matrix $\mat{\Gamma}_{\rm po}(\designSubset)$ satisfies 
\[
    \mat{\Gamma}_{\rm po}^{-1}(\designSubset):=\mat{H}(\dparmap{\data}(\designSubset)) + \Cmm^{-1},
\]
where 
$\mat{H}(\dparmap{\data}(\designSubset))\defeq \mat{J}(\dparmap{\data}(\designSubset))^{\transp}
\sensPrec(\designSubset)\mat{J}(\dparmap{\data}(\designSubset))$ 
is the Gauss--Newton Hessian, 
with
$\sensPrec(\designSubset)$ as in~\eqref{eq:Gamma-S} and 
$\mat{J}$ the Jacobian of the PTO map, i.e., 
$\mat{J}(\dparmap{}) \defeq
\mathsf{D}_\params\lfwd(\params)\mid_{\params=\dparmap{}}$. 

In this setting, it is common to use an approximate (empirical) expected utility
\begin{align*}
\EIG(\designSubset)=\mathbb{E}_{\data(\designSubset)}\Bigl[\DKL{\pipost{}(\designSubset)}{\pipr}\Bigr]\approx\frac{1}{n_{\rm d}}\sum_{i=1}^{n_{\rm d}}\DKL{\pi_{\params \vert \data_i}^{\rm LA}
(\designSubset)}{\pipr},
\end{align*}
with 
$\data_i = \Weight(\designSubset)(\lfwd(\params_i) + \noise_i)$,
where $\{\params_1,\params_2,\dots,\params_{n_{\rm d}}\}$ are samples from the prior.

When the prior is Gaussian and the Laplace approximation to the posterior is
made, the KL divergence from the approximate posterior to the prior admits  
a closed-form expression. Specifically, 
letting $\mat{H}^i(\designSubset) \defeq \mat{H}(\dparmap{\data_i}(\designSubset))$ 
and $\tilde{\mat{H}}^i \defeq  \Cmm^{\frac{1}{2}}\mat{H}^i(\designSubset) 
\Cmm^{\frac{1}{2}}$, we have 
\[
\DKLbig{\pi_{\params \vert \data_i}^{\rm LA} (\designSubset)}{\pipr} 
=
\frac{1}{2}\Big[ 
    \log\det (\mat{I}+\tilde{\mat{H}}^i(S)) -
\trace\big( \mat{\Gamma}_{\rm po}^{\frac{1}{2}}(\designSubset) \mat{H}^i(\designSubset) \mat{\Gamma}_{\rm po}^{\frac{1}{2}}(\designSubset)\big)
   + 
   \|\dparmap{\data_i}(\designSubset)-\mpr\|_{\Cmm^{-1}}^2\Big].
\]

As noted in~\cite[Section
3.4]{WuChenGhattas2023}, 
one may obtain a further approximation by 
replacing $\{\dparmap{\data_i}(\designSubset)\}_{i=1}^{n_{\mathrm{d}}}$ with
the corresponding prior samples $\{\params_i\}_{i=1}^{n_{\mathrm{d}}}$. 
This 
results in $\norm[\Cmm^{-1}]{\params_i-\mpr}^2$ being independent of the design
and can thus be dropped from the design criterion. This relieves one
from solving multiple MAP estimation problems for each evaluation 
of the  
design criterion;
see~\cite{WuChenGhattas2023} for more details.
The modified design criterion, however, still requires 
evaluation of the Gauss--Newton Hessian at the
prior samples. 
Note also that, for each $i$, the corresponding
Gauss--Newton Hessians can be obtained as the result of (locally)
linearizing the forward model around the prior sample $\params_i$, i.e, using 
$\lfwd(\params)\approx\lfwd(\params_i)+\mat{J}(\params_i)(\params-\params_i)$.

We next show that the \fo{} global linear model coincides with the 
prior expectation of the Jacobian $\mat{J}$. Before formalizing this, we record the following 
technical lemma: 
\begin{lemma}\label{thm:divergTh}
Suppose $\lfwd:\mathbb{R}^\nparams\to\mathbb{R}^\nDataAll$ is differentiable and bounded and $\pi_{\params}=\mathcal{N}(\bvec\parm_0,\Cmm )$. 
Then, we have 
$\int_{\mathbb{R}^\nparams}\mathsf{D}_\params (\lfwd(\bvec\parm)\pi_{\params})\,d\bvec\parm=\mat{0}$.
\end{lemma}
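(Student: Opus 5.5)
The plan is to reduce the claim to a one-dimensional fundamental theorem of calculus, applied componentwise and coordinate by coordinate, and to use the Gaussian decay of $\pi_{\params}$ to kill the boundary terms. Write $g(\bvec\parm) \defeq \lfwd(\bvec\parm)\pi_{\params}(\bvec\parm)$. The $(r,j)$ entry of the matrix $\mathsf{D}_\params g$ is $\partial_{\parm_j}\big(\lfwd_r(\bvec\parm)\pi_{\params}(\bvec\parm)\big)$, where $\lfwd_r$ denotes the $r$th component of $\lfwd$. It therefore suffices to show that
\[
\int_{\mathbb{R}^\nparams}\partial_{\parm_j} h(\bvec\parm)\,d\bvec\parm = 0
\]
for each scalar function $h = \lfwd_r\pi_{\params}$ and each $j$.

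I would first establish integrability. By the product rule,
\[
\partial_{\parm_j}h = (\partial_{\parm_j}\lfwd_r)\,\pi_{\params} + \lfwd_r\,\partial_{\parm_j}\pi_{\params},
\qquad
\partial_{\parm_j}\pi_{\params}(\bvec\parm) = -\big[\Cmm^{-1}(\bvec\parm-\bvec\parm_0)\big]_j\,\pi_{\params}(\bvec\parm).
\]
The second term is integrable because $\lfwd_r$ is bounded and Gaussian moments are finite. The first term requires some control of the derivative of $\lfwd$. Differentiability and boundedness alone do not obviously give $\mathsf{D}_\params\lfwd\in L^1(\pi_{\params})$, so I would either read ``bounded'' as covering the derivative (e.g.\ $\lfwd\in C^1$ with at most polynomially growing Jacobian), or interpret the integral as an improper integral over expanding boxes. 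I would state this regularity interpretation explicitly. This is the main technical obstacle: once integrability is settled, the rest is routine.

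Assuming integrability, I apply Fubini and integrate first in $\parm_j$ with the other coordinates held fixed. Along that line, $h$ is differentiable, so
\[
\int_{-L}^{L}\partial_{\parm_j}h\,d\parm_j = h(\ldots,L,\ldots) - h(\ldots,-L,\ldots).
\]
Since $|\lfwd_r|\le M$ and $\pi_{\params}(\bvec\parm)\to 0$ as $|\parm_j|\to\infty$ (the exponent $-\tfrac12\|\bvec\parm-\bvec\parm_0\|^2_{\Cmm^{-1}}$ tends to $-\infty$ because $\Cmm^{-1}$ is positive definite), both boundary terms vanish as $L\to\infty$. Dominated convergence, using the integrable bound from the previous step, lets me pass from the truncated integrals to the full line integral, which is therefore zero for almost every choice of the remaining coordinates. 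Integrating over those coordinates then gives zero.

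Collecting over all $r$ and $j$ yields $\int_{\mathbb{R}^\nparams}\mathsf{D}_\params(\lfwd\pi_{\params})\,d\bvec\parm=\mat 0$.

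As an alternative route that avoids pointwise line arguments, I could apply the divergence theorem on balls $B_R$. The surface term is bounded by $M\sup_{\partial B_R}\pi_{\params}\cdot|\partial B_R|\sim R^{\nparams-1}e^{-cR^2}\to 0$. This is likely the form the lemma's label suggests, and it gives the same result under the same integrability caveat.
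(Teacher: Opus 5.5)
Your proposal is correct. Your fallback route is exactly the paper's proof: the divergence theorem on balls $B_r$, with the surface term bounded by $M\sup_{\partial B_r}\pi_{\params}\,|\partial B_r|\lesssim r^{\nparams-1}e^{-cr^2}$. Your main route replaces this with Fubini and the one-dimensional fundamental theorem of calculus along coordinate lines, which is the same ``boundary terms vanish by Gaussian decay'' idea in sliced form.

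The sliced version gains some rigor and generality. The divergence theorem as the paper cites it needs $\lfwd\pi_{\params}\in C^1(\overline{B_r})$, which ``differentiable'' does not give. Your argument needs only that $\lfwd$ is differentiable everywhere and $\mathsf{D}_\params(\lfwd\pi_{\params})\in L^1(\mathbb{R}^\nparams)$. Under that assumption, Fubini makes the derivative integrable on almost every line, and the fundamental theorem of calculus holds for everywhere-differentiable functions with Lebesgue-integrable derivative. This is the version you implicitly use; it is slightly stronger than the elementary one requiring a continuous derivative.

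Your integrability caveat is also well founded, and the paper glosses over it when it silently identifies $\int_{\mathbb{R}^\nparams}$ with $\lim_{r\to\infty}\int_{B_r}$. Differentiability plus boundedness does not make $\mat{J}\pi_{\params}$ integrable. For example, in one dimension take $F(x)=\sin(e^{x^2})$ and a standard Gaussian. Then $|F'\pi_{\params}|\propto 2x e^{x^2/2}|\cos(e^{x^2})|$, and the substitution $u=e^{x^2}$ turns its integral into $\int_1^\infty u^{-1/2}|\cos u|\,du=\infty$. The integral of $(F\pi_{\params})'$ over $[-L,L]$ still tends to zero, so the claim survives only as an improper integral, as you noted. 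Adding the hypothesis $\mat{J}\in L^1(\pi_{\params})$ costs nothing, since Theorem~\ref{thm:Stein} already presupposes that $\mathbb{E}[\mat{J}(\bvec\parm)]$ exists.
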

\begin{proof}
Let $B_r(\bvec0)\subset\mathbb{R}^n$ be the open ball of radius $r$ centered at
the origin. By the divergence theorem~\cite[Appendix C]{Evans2022}, 
$\int_{B_r(\bvec0)}\mathsf{D}_\params (\lfwd(\bvec\parm)\pi_{\params})\,d\bvec\parm =
\int_{\partial B_r(\bvec0)}(\lfwd(\bvec\parm)\pi_{\params})\otimes \bvec{n}\,d\bvec s$.
Thus,
$\int_{\mathbb{R}^\nparams}\mathsf{D}_\params (\lfwd(\bvec\parm)\pi_{\params})\;d\bvec\parm=
\lim_{r \to \infty}\int_{\partial B_r(\bvec0)}(\lfwd(\bvec\parm)\pi_{\params})\otimes \bvec{n}\;d\bvec s=\mat{0}$.
The last equality follows from the facts that $\lfwd$ is bounded and 
$\pi_{\params}$ is a multivariate normal PDF.
\end{proof}
The following multidimensional analogue of Stein's lemma~\cite{Stein1981} makes
the interpretation of the \fo{} model as the expectation of the Jacobian $\mat{J}$
precise. 
\begin{theorem}\label{thm:Stein}
    Suppose $\lfwd:\mathbb{R}^\nparams\to\mathbb{R}^\nDataAll$ is differentiable and bounded with Jacobian $\mat{J}(\bvec\parm)\in\mathbb{R}^{\nDataAll\times\nparams}$. Then, letting $\lfwd_0=\mathbb{E}[\lfwd(\bvec\parm)]\in\mathbb{R}^\nDataAll$ be the expected value and 
 $\bvec\Gamma_{\lfwd \params}=\mathbb{E}[(\lfwd(\bvec\parm)-\lfwd_0)(\bvec\parm -\bvec\parm_0)^{\transp}]\in\mathbb{R}^{\nDataAll\times\nparams}$ be the cross covariance matrix, we have
    $\mathbb{E}[\mat{J}(\bvec\parm)]=\bvec\Gamma_{\lfwd \params}\Cmm^{-1}$.
\end{theorem}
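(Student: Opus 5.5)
The plan is to apply Lemma~\ref{thm:divergTh} and expand the derivative of the product $\lfwd(\bvec\parm)\pi_{\params}(\bvec\parm)$ with the product rule. For the Gaussian density $\pi_{\params}=\mathcal{N}(\bvec\parm_0,\Cmm)$ we have the explicit gradient
\[
\nabla_{\params}\pi_{\params}(\bvec\parm) = -\pi_{\params}(\bvec\parm)\,\Cmm^{-1}(\bvec\parm-\bvec\parm_0).
\]
Hence
\[
\mathsf{D}_\params\big(\lfwd(\bvec\parm)\pi_{\params}(\bvec\parm)\big) = \mat{J}(\bvec\parm)\pi_{\params}(\bvec\parm) - \lfwd(\bvec\parm)(\bvec\parm-\bvec\parm_0)^{\transp}\Cmm^{-1}\pi_{\params}(\bvec\parm),
\]
where the second term is the outer product of $\lfwd$ with the gradient of $\pi_{\params}$, using the symmetry of $\Cmm^{-1}$.

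Integrating over $\mathbb{R}^\nparams$ and invoking Lemma~\ref{thm:divergTh}, the left-hand side vanishes. This gives
\[
\mathbb{E}[\mat{J}(\bvec\parm)] = \mathbb{E}\big[\lfwd(\bvec\parm)(\bvec\parm-\bvec\parm_0)^{\transp}\big]\Cmm^{-1}.
\]
Because $\mathbb{E}[\bvec\parm-\bvec\parm_0]=\bvec 0$ under the prior, we may subtract the constant $\lfwd_0$ inside the expectation without changing its value. That is, $\mathbb{E}[\lfwd_0(\bvec\parm-\bvec\parm_0)^{\transp}]=\lfwd_0\,\mathbb{E}[\bvec\parm-\bvec\parm_0]^{\transp}=\mat{0}$, so the right-hand side equals $\bvec\Gamma_{\lfwd\params}\Cmm^{-1}$, which is the claim.

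The main obstacle is justifying that both terms in the product-rule expansion are integrable, so that splitting the integral of the sum is legitimate. The second term is fine: $\lfwd$ is bounded and the Gaussian has finite first moments, so $\lfwd(\bvec\parm)(\bvec\parm-\bvec\parm_0)^{\transp}\pi_{\params}$ is integrable. The Jacobian term is harder, since differentiability and boundedness of $\lfwd$ do not by themselves ensure that $\mat{J}\pi_{\params}$ is integrable.

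I would handle this at the level of balls, as in the proof of Lemma~\ref{thm:divergTh}. On $B_r(\bvec 0)$ the divergence theorem gives
\[
\int_{B_r}\mat{J}\pi_{\params} = \int_{B_r}\lfwd(\bvec\parm-\bvec\parm_0)^{\transp}\Cmm^{-1}\pi_{\params} + \int_{\partial B_r}(\lfwd\pi_{\params})\otimes\bvec n .
\]
As $r\to\infty$, the boundary term tends to zero and the first term on the right converges by dominated convergence. So $\int_{B_r}\mat{J}\pi_{\params}$ has a limit, which I take as $\mathbb{E}[\mat{J}]$, interpreted as an improper integral. If absolute integrability is needed, I would state it as an implicit regularity assumption, for instance $\mat{J}$ having at most polynomial growth or being continuous with $\mathbb{E}\|\mat{J}\|<\infty$. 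Under such an assumption, dominated convergence identifies the limit with the Lebesgue integral.
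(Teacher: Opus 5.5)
Your proposal is correct and follows the paper's proof step for step: apply Lemma~\ref{thm:divergTh}, expand $\mathsf{D}_{\params}(\lfwd\,\pi_{\params})$ by the product rule using the Gaussian score $\nabla_{\params}\pi_{\params}=-\Cmm^{-1}(\bvec\parm-\bvec\parm_0)\pi_{\params}$, and drop $\lfwd_0$ because $\mathbb{E}[\bvec\parm-\bvec\parm_0]=\bvec{0}$. Your ball-by-ball treatment of the integrability of $\mat{J}\pi_{\params}$ is a real refinement: the paper splits $\int(\mat{J}-\lfwd(\bvec\parm-\bvec\parm_0)^{\transp}\Cmm^{-1})\pi_{\params}$ into two integrals without comment, and so tacitly assumes that $\mathbb{E}[\mat{J}]$ exists as a Lebesgue integral.
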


\begin{proof}
First we note that for $\pi_{\params}=\mathcal{N}(\bvec\parm_0,\Cmm )$, direct calculation gives 
$\nabla_\params \pi_{\params}=(\mathsf{D}_\params \pi_{\params})^{\transp}=-\Cmm ^{-1}(\bvec\parm-\bvec\parm_0)\pi_{\params}$.
Then, 
letting $\bvec\Gamma_{\lfwd \params}=\mathbb{E}[(\lfwd(\bvec\parm)-\lfwd_0)(\bvec\parm -\bvec\parm_0)^{\transp}]=\mathbb{E}[\lfwd(\bvec\parm)(\bvec\parm -\bvec\parm_0)^{\transp}]$, we have
\begin{align*}
    \mat{0} =\int_{\mathbb{R}^\nparams}\mathsf{D}_\params (\lfwd(\bvec\parm)\pi_{\params})\;d\bvec\parm
    &=\int_{\mathbb{R}^\nparams}\pi_{\params}\mat{J}(\bvec\parm)+\lfwd(\bvec\parm)(\nabla_\params \pi_{\params})^{\transp}\;d\bvec\parm\\
    &=\int_{\mathbb{R}^\nparams}
(\mat{J}(\bvec\parm)-\lfwd(\bvec\parm)(\bvec \parm-\bvec\parm_0)^{\transp}\Cmm ^{-1})\pi_{\params}\;d\bvec\parm\\
&=\int_{\mathbb{R}^\nparams}\mat{J}(\bvec\parm)\pi_{\params}\;d\bvec\parm-\int_{\mathbb{R}^\nparams}\lfwd(\bvec\parm)(\bvec\parm-\bvec\parm_0)^{\transp}\Cmm^{-1}\pi_{\params}\;d\bvec\parm\\
&=\mathbb{E}[\mat{J}(\bvec\parm)]-\mathbb{E}[\lfwd(\bvec\parm)(\bvec\parm-\bvec\parm_0)^T]\Cmm^{-1}\\
&=\mathbb{E}[\mat{J}(\bvec\parm)]-\bvec\Gamma_{\lfwd \params}\Cmm^{-1}. 
\end{align*}
\end{proof}

The preceding discussion shows that the \fo{} global linear
model~\eqref{eq:0op} provides a reasonable alternative to
sensor selection using sample-dependent Laplace approximations.  This enables
replacing Monte Carlo average of KL-divergences involving \emph{local Jacobians}
with a single closed-form expression~\eqref{eq:error_corrected_EIG} built from
the \emph{expected Jacobian}.

\begin{rmk}
The assumptions on $\lfwd$ in Theorem \ref{thm:Stein} can be relaxed. 
This operator does not necessarily need to be bounded. We only need
$\lim_{r \to \infty}\int_{\partial B_r(\bvec0)}(\lfwd(\bvec\parm)\pi_{\params})\otimes \bvec{n}\;d\bvec s=\mat{0}$. 
\end{rmk}

\begin{rmk}
Note that as long as $\lfwd$ is square-integrable with respect to $\pi_{\params}$ the \fo{} operator 
$\bvec\Gamma_{\lfwd \params}\Cmm^{-1}$ is well-defined, 
even if $\lfwd$ is not differentiable.
\end{rmk}

\subsection{A BAE-based lower bound on the EIG}\label{subsec:EIG_bound}
In this section, we prove a lower bound on the exact EIG in terms of the 
\fo{} EIG~\eqref{eq:error_corrected_EIG}.
For readability, we suppress the
design dependency in the model and densities. Since the design enters the
observation model~\eqref{eq:design_dep_model} linearly through the row-selection
matrix $\Weight(\designSubset)$, it can be absorbed into the forward operator
and noise distribution without affecting the structure of the argument. 
Extending the results to the design-dependent setting is thus
straightforward.
Also, as in Section~\ref{subsec:localvsglobal}, we consider a generic nonlinear PTO map $\lfwd$. Thus, throughout this section, $\FF$ and $\Ofwd$ are replaced by $\lfwd$ and $\lfwd_{\bvec{O}}$, respectively, including in equations referenced from Section~\ref{subsec:nonlin_BAE}.

Let $\pi_{\data,\params}$ denote the joint density of $(\data,\params)$ under
the accurate model~\eqref{eq:model} and $\pi^{\Op}_{\data,\params}$
the joint density under the \fo{} global linear model~\eqref{eq:GL_model}. By
construction, 
\[
\pi^{\Op}_{\data,\params} = \mathcal{N}\left( \begin{bmatrix} \overline{\data} \\ \overline{\params} \end{bmatrix}, \begin{bmatrix} \bvec{\Gamma}_{\data\data} & \bvec{\Gamma}_{\data\params} \\ \bvec{\Gamma}_{\params\data} & \Cmm \end{bmatrix} \right).
\]
The following technical lemma shows that the mean and the covariance matrix of 
$\pi^{\Op}_{\data,\params}$ are the same as those of $\pi_{\data,\params}$.
\begin{lemma}\label{lemma:moment_matching}
The following hold,
\[
\begin{aligned}
\mathbb{E}_{\pi^{\Op}_{\data,\params}}\big[(\data,\params)\big] = \mathbb{E}_{\pi_{\data,\params}}\big[ (\data,\params) \big];\quad\text{and} \quad
\Cov_{\pi^{\Op}_{\data,\params}}((\data,\params),(\data,\params)) = \Cov_{\pi_{\data,\params}}((\data,\params),(\data,\params)).
\end{aligned}
\]
\end{lemma}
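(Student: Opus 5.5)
The plan is a direct moment computation under each model, using only the definitions in~\eqref{eq:GL_model} and~\eqref{eq:0op} (with $\FF$ replaced by $\lfwd$ and $\Ofwd$ by $\lfwd_{\Op}$). Throughout, $\overline{\lfwd}$, $\bvec{\Gamma}_{\lfwd\params}$ and $\bvec{\Gamma}_{\lfwd\lfwd}$ denote the prior mean, the cross-covariance with $\params$, and the covariance of $\lfwd(\params)$. Under the accurate model we have $\data = \lfwd(\params)+\noise$ with $\noise\perp\params$. Hence the exact moments follow immediately:
\begin{align*}
\mathbb{E}[\params]&=\mpr, & \mathbb{E}[\data]&=\overline{\lfwd}+\overline{\noise},\\
\Cov(\params,\params)&=\Cmm, & \Cov(\data,\params)&=\bvec{\Gamma}_{\lfwd\params},\\
\Cov(\data,\data)&=\bvec{\Gamma}_{\lfwd\lfwd}+\Cnoise. & &
\end{align*}
In each of these, the cross terms involving $\noise$ vanish by independence, and $\Cov(\noise,\params)=\mat{0}$. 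I will assume $\lfwd$ is square-integrable under $\pipr$ so that all these quantities exist.

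Under the \fo{} model, $\data=\lfwd_{\Op}\params+\widetilde{\totnoise}$ with $\widetilde{\totnoise}\perp\params$. I will compute the same four blocks.

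\emph{Mean.} We get $\lfwd_{\Op}\mpr+\overline{\totnoise}=\lfwd_{\Op}\mpr+\overline{\lfwd}-\lfwd_{\Op}\mpr+\overline{\noise}=\overline{\lfwd}+\overline{\noise}$. The $\params$-marginal is the prior in both models, so the $\params$ components agree trivially.

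\emph{Cross-covariance.} We get $\lfwd_{\Op}\Cmm=\bvec{\Gamma}_{\lfwd\params}\Cmm^{-1}\Cmm=\bvec{\Gamma}_{\lfwd\params}$, using independence of $\widetilde{\totnoise}$ from $\params$.

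\emph{Data covariance.} We get $\lfwd_{\Op}\Cmm\lfwd_{\Op}^{\transp}+\Ctgm$. The first term equals $\bvec{\Gamma}_{\lfwd\params}\Cmm^{-1}\bvec{\Gamma}_{\lfwd\params}^{\transp}$. Adding $\Ctgm=\Cnoise+\bvec{\Gamma}_{\lfwd\lfwd}-\bvec{\Gamma}_{\lfwd\params}\Cmm^{-1}\bvec{\Gamma}_{\lfwd\params}^{\transp}$ makes this term cancel and leaves $\bvec{\Gamma}_{\lfwd\lfwd}+\Cnoise$.

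Together these give equality of the full joint mean vector and the full block covariance matrix, which proves the lemma.

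The only step needing care is confirming that the expressions in~\eqref{eq:0op} are indeed the ones obtained from the zero-operator BAE construction. In that construction $\eps=\lfwd(\params)$, so $\Cem=\bvec{\Gamma}_{\lfwd\params}$, $\Cee=\bvec{\Gamma}_{\lfwd\lfwd}$ and $\overline{\eps}=\overline{\lfwd}$, and~\eqref{eq: BAEstats} then gives $\Ctgm=\Cegm+\Cnoise$ as used above. This is already recorded in Section~\ref{subsec:nonlin_BAE}, so no real obstacle is expected. The argument is essentially the observation that the BAE model is the Gaussian whose first two joint moments match those of the accurate model, which is the Schur-complement cancellation in the data covariance.
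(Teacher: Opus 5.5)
Your proposal is correct and follows the paper's own proof. Like the paper, you compute the data mean, the cross-covariance and the data covariance under the \fo{} model from $\widetilde{\totnoise}\perp\params$ and the definitions in~\eqref{eq:0op}, where the term $\bvec{\Gamma}_{\lfwd\params}\Cmm^{-1}\bvec{\Gamma}_{\lfwd\params}^{\transp}$ cancels in the data covariance. Your explicit accurate-model moments and square-integrability assumption on $\lfwd$ simply make explicit what the paper leaves implicit.
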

\begin{proof}
The marginal mean and covariance of $\params$ match by construction. For the remaining quantities, we use the definitions~\eqref{eq:0op}, the independence $\tilde{\totnoise} \perp \params$ in the \fo{} model, and $\noise \perp \params$ in the accurate model. 
We first consider the marginal data mean. We have
\begin{align*}
\overline{\data} = \mathbb{E}_{\pi^{\Op}_{\data,\params}}[\data] &= \lfwd_{\bvec{O}}\overline{\params}+\overline{\totnoise} = \lfwd_{\bvec{O}}\overline{\params}+\overline{\lfwd}-\lfwd_{\bvec{O}}\overline{\params}+\overline{\noise} = \overline{\lfwd}+\overline{\noise} = \mathbb{E}_{\pi_{\data,\params}}[\data].
\end{align*}
For the cross-covariance, independence gives
$\Cov_{\pi^{\Op}}(\widetilde{\totnoise},\params) = \mat{0}$ and
$\Cov_{\pi}(\noise,\params) = \mat{0}$. Thus, 
\begin{align*}
\bvec{\Gamma}_{\data\params} \coloneq \Cov_{\pi^{\Op}}(\data,\params) &= \Cov_{\pi^{\Op}}(\lfwd_{\bvec{O}}\params,\params) =  \lfwd_{\bvec{O}}\Cov_{\pi^{\Op}}(\params,\params) \\
&= \Cfm\Cmm^{-1}\Cmm = \Cfm = \Cov_{\pi}(\data,\params).
\end{align*}
Likewise, utilizing the independence for the data covariance, we have 
\begin{align*}
\bvec{\Gamma}_{\data\data} \coloneq \Cov_{\pi^{\Op}}(\data,\data) &= \Cov_{\pi^{\Op}}(\lfwd_{\bvec{O}}\params,\lfwd_{\bvec{O}}\params)+\Cov_{\pi^{\Op}}(\tilde{\totnoise},\tilde{\totnoise}) \\
&= \lfwd_{\bvec{O}}\Cov_{\pi^{\Op}}(\params,\params)\lfwd_{\bvec{O}}^{\transp}+\Ctgm \\
&= \Cfm\Cmm^{-1}\Cfm^{\transp}+\Cnoise+\Cff-\Cfm\Cmm^{-1}\Cfm^{\transp} \\ 
&= \Cnoise+\Cff = \Cov_{\pi}(\data,\data). 
\end{align*}
\end{proof}
\noindent
The moment-matching property in Lemma~\ref{lemma:moment_matching} plays 
a key role in the following analysis. 

In what follows, we will express the EIG in terms of the \emph{differential entropy}.
Recall that the differential entropy of an $\mathbb{R}^n$-valued 
random variable $\bvec{x}$ with density $\pi$ is 
\[
h(\pi) \coloneqq -\mathbb{E}_{\pi}\big[ \log(\pi(\bvec{x})) \big].
\]  
When $\pi$ is Gaussian, 
$h(\pi) = \frac{1}{2}\left(n\log(2\pi) + \log \det(\Cov_{\pi}(\bvec{x}))+n\right)$; 
see~\cite[Theorem 9.4.1]{Cover1999}. 
The following lemma is another key tool in proving the BAE-based lower bound 
for the EIG.
\begin{lemma}\label{lemma:entropy_matching}
Let $p = \mathcal{N}(\overline{\bvec{x}},\bvec{\Gamma}_{\bvec{x}\bvec{x}})$, and let $\pi$ be a density with $\mathbb{E}_{\pi}[\bvec{x}] = \overline{\bvec{x}}$ and $\Cov_{\pi}(\bvec{x},\bvec{x}) = \bvec{\Gamma}_{\bvec{x}\bvec{x}}$ for $\bvec{x} \in \mathbb{R}^{n}$. Then 
\begin{equation}\label{eq:identities}
-\mathbb{E}_{\pi}\big[ \log(p) \big] = h(p) \quad \text{and} \quad h(p)-h(\pi) = \DKL{\pi}{p}. 
\end{equation}
\end{lemma}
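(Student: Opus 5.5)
The plan is to write out $\log p$ explicitly and use the fact that it is a quadratic polynomial in $\bvec{x}$. Its expectation under any density therefore depends only on that density's first two moments. Concretely,
\[
-\log p(\bvec{x}) = \frac{1}{2}\Big(n\log(2\pi) + \log\det(\bvec{\Gamma}_{\bvec{x}\bvec{x}}) + (\bvec{x}-\overline{\bvec{x}})^{\transp}\bvec{\Gamma}_{\bvec{x}\bvec{x}}^{-1}(\bvec{x}-\overline{\bvec{x}})\Big).
\]
Taking $\mathbb{E}_{\pi}$, the only nonconstant term is the quadratic form. Writing it as a trace gives
\[
\mathbb{E}_{\pi}\big[(\bvec{x}-\overline{\bvec{x}})^{\transp}\bvec{\Gamma}_{\bvec{x}\bvec{x}}^{-1}(\bvec{x}-\overline{\bvec{x}})\big] = \trace\big(\bvec{\Gamma}_{\bvec{x}\bvec{x}}^{-1}\mathbb{E}_{\pi}[(\bvec{x}-\overline{\bvec{x}})(\bvec{x}-\overline{\bvec{x}})^{\transp}]\big) = \trace(\bvec{I}_n) = n.
\]
This step uses the assumptions $\mathbb{E}_{\pi}[\bvec{x}]=\overline{\bvec{x}}$ and $\Cov_{\pi}(\bvec{x},\bvec{x}) = \bvec{\Gamma}_{\bvec{x}\bvec{x}}$. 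Hence $-\mathbb{E}_{\pi}[\log p] = \frac{1}{2}(n\log(2\pi)+\log\det\bvec{\Gamma}_{\bvec{x}\bvec{x}}+n)$, which is exactly the Gaussian entropy formula for $h(p)$ quoted just before the lemma. This proves the first identity.

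For the second identity, expand the KL divergence from its definition:
\[
\DKL{\pi}{p} = \mathbb{E}_{\pi}[\log \pi] - \mathbb{E}_{\pi}[\log p] = -h(\pi) + h(p),
\]
where the first identity is substituted for the cross-entropy term $-\mathbb{E}_{\pi}[\log p]$.

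The only point needing care is integrability, so that splitting the expectation of $\log(\pi/p)$ into two separate expectations is legitimate. Since $\pi$ has finite second moments, $\mathbb{E}_{\pi}[\log p]$ is finite. I would then assume $h(\pi)$ is well defined, which is implicit in the statement. Under that assumption the split is valid. If $h(\pi)=-\infty$, both sides equal $+\infty$ under the usual conventions. I expect no real obstacle beyond this bookkeeping.
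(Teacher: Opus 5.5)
Your proposal is correct and matches the paper's proof step for step. You expand $-\log p$, reduce the expected quadratic form to $\operatorname{tr}(\bvec{I}_n)=n$ via the trace identity and the matched first and second moments, then write $\DKL{\pi}{p}$ as cross-entropy minus entropy and substitute the first identity. Your integrability remark goes slightly beyond the paper, which leaves these points implicit: $\mathbb{E}_{\pi}[\log p]$ is finite because $\pi$ has finite second moments, and the case $h(\pi)=-\infty$ gives $+\infty$ on both sides.
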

\begin{proof}
Since $p$ is Gaussian, 
\[
-\mathbb{E}_{\pi}\big[\log(p)\big] = \frac{1}{2}\Big(n\log(2\pi)+\log\det(\bvec{\Gamma}_{\bvec{x}\bvec{x}})+\mathbb{E}_{\pi}\big[(\bvec{x}-\overline{\bvec{x}})^{\transp}\bvec{\Gamma}_{\bvec{x}\bvec{x}}^{-1}(\bvec{x}-\overline{\bvec{x}})\big]\Big).
\]
Using the properties of matrix trace and the moment-matching assumption, we have
{
\setlength{\abovedisplayskip}{2pt}
\setlength{\belowdisplayskip}{2pt}
\begin{align*}
\mathbb{E}_{\pi}\big[(\bvec{x}-\overline{\bvec{x}})^{\transp}\bvec{\Gamma}_{\bvec{x}\bvec{x}}^{-1}(\bvec{x}-\overline{\bvec{x}})\big] &\!=\!\mathbb{E}_{\pi}\big[ \trace\big(\bvec{\Gamma}_{\bvec{x}\bvec{x}}^{-1}(\bvec{x}-\overline{\bvec{x}})(\bvec{x}-\overline{\bvec{x}})^{\transp} \big)\big]\\
&\!=\!\trace\Big( \bvec{\Gamma}_{\bvec{x}\bvec{x}}^{-1}\mathbb{E}_{\pi}\big[(\bvec{x}-\overline{\bvec{x}})(\bvec{x}-\overline{\bvec{x}})^{\transp}\big] \Big)
\!=\!\trace\big(\bvec{\Gamma}_{\bvec{x}\bvec{x}}^{-1}\Cov_{\pi}(\bvec{x},\bvec{x}) \big) = \trace[ \bvec{I}_n] \!=\!n.
\end{align*}
}
Therefore, $-\mathbb{E}_{\pi}\big[\log(p)\big] = \frac{1}{2}\big(n\log(2\pi)+\log\det(\bvec{\Gamma}_{\bvec{x}\bvec{x}})+n\big) = h(p)$. 

The second identity in~\eqref{eq:identities} follows directly:
\[
h(p)-h(\pi) = -\mathbb{E}_{\pi}[\log(p)] + \mathbb{E}_{\pi}[\log(\pi)] = \DKL{\pi}{p}. 
\]
\end{proof}

We are now ready to prove the main result of this section.
\begin{theorem}\label{thm:EIG_LB}
Let $\pi_{\data,\params}$ denote the joint density induced by the accurate model~\eqref{eq:model} with Gaussian 
prior $\pi_{\params} = \mathcal{N}(\overline{\params},\Cmm)$ and 
assume that $\pi_{\data,\params}$ has finite differential entropy.
Then, 
\begin{align}\label{eq:EIGbound}
\mfEIG \ge \mfEIG^{\Op},
\end{align}
where
$\mfEIG
:= \mathbb{E}_{\pi_{\data}}\big[ \DKL{\pi_{\params \vert \data}}{\pi_{\params}} \big]$ 
and 
$\mfEIG^{\Op}
:= \mathbb{E}_{\pi^{\Op}_{\data}}\big[ \DKLbig{\pi^{\Op}_{\params \vert \data}}{\pi^{\Op}_{\params}}\big]
$ 
are the exact and \fo{} EIG, respectively.
Furthermore,
$\mfEIG-\mfEIG^{\Op} =\mathbb{E}_{\pi_{\data}}\big[ \DKLbig{\pi_{\params \vert \data}}{\pi^{\Op}_{\params\vert\data}}\big]$.
\end{theorem}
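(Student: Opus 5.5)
We express both EIGs as mutual informations and write these in terms of differential entropies, so that Lemma~\ref{lemma:entropy_matching} can be combined with the moment matching of Lemma~\ref{lemma:moment_matching}. Since $\pi_{\params\vert\data}/\pi_{\params}=\pi_{\data,\params}/(\pi_{\data}\pi_{\params})$, we have
\[
\mfEIG=\mathbb{E}_{\pi_{\data,\params}}\Big[\log\frac{\pi_{\data,\params}}{\pi_{\data}\pi_{\params}}\Big]=h(\pi_{\data})+h(\pi_{\params})-h(\pi_{\data,\params}).
\]
Finiteness of $h(\pi_{\data,\params})$ is assumed. $h(\pi_{\data})$ is finite because $\data$ has finite second moments, which bounds the entropy from above, and because $\data$ contains additive Gaussian noise, which bounds it from below. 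Hence every term is finite and the splitting is legitimate. For the Gaussian \fo{} model we likewise have $\mfEIG^{\Op}=h(\pi^{\Op}_{\data})+h(\pi^{\Op}_{\params})-h(\pi^{\Op}_{\data,\params})$, and the prior is shared, so $\pi^{\Op}_{\params}=\pi_{\params}$.

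Subtracting the two expressions gives
\[
\mfEIG-\mfEIG^{\Op}=\big[h(\pi^{\Op}_{\data,\params})-h(\pi_{\data,\params})\big]-\big[h(\pi^{\Op}_{\data})-h(\pi_{\data})\big].
\]
By Lemma~\ref{lemma:moment_matching}, $\pi^{\Op}_{\data,\params}$ is the Gaussian with the same mean and covariance as $\pi_{\data,\params}$. Marginalizing, $\pi^{\Op}_{\data}$ is the Gaussian with the same first two moments as $\pi_{\data}$. Lemma~\ref{lemma:entropy_matching} then turns both brackets into KL divergences:
\[
\mfEIG-\mfEIG^{\Op}=\DKL{\pi_{\data,\params}}{\pi^{\Op}_{\data,\params}}-\DKL{\pi_{\data}}{\pi^{\Op}_{\data}}.
\]

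The chain rule for KL divergence identifies this difference. Writing $\pi_{\data,\params}=\pi_{\data}\pi_{\params\vert\data}$ and $\pi^{\Op}_{\data,\params}=\pi^{\Op}_{\data}\pi^{\Op}_{\params\vert\data}$, and taking the expectation under $\pi_{\data,\params}$ of the log-ratio, gives
\[
\DKL{\pi_{\data,\params}}{\pi^{\Op}_{\data,\params}}=\DKL{\pi_{\data}}{\pi^{\Op}_{\data}}+\mathbb{E}_{\pi_{\data}}\big[\DKL{\pi_{\params\vert\data}}{\pi^{\Op}_{\params\vert\data}}\big].
\]
Substituting this yields the stated identity $\mfEIG-\mfEIG^{\Op}=\mathbb{E}_{\pi_{\data}}\big[\DKL{\pi_{\params\vert\data}}{\pi^{\Op}_{\params\vert\data}}\big]$. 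Nonnegativity of KL then gives~\eqref{eq:EIGbound}.

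The algebra is routine; the step needing care is integrability. We must justify splitting the log-ratios into separate expectations, which requires each entropy and each KL term to be finite, so that no $\infty-\infty$ appears. Lemma~\ref{lemma:entropy_matching} already supplies finite cross-entropies $-\mathbb{E}_{\pi}[\log p]$ against the Gaussians. Combined with the finite entropies discussed above, this makes every term finite. We also need $\pi^{\Op}_{\data,\params}$ to be nondegenerate so that its log-density is defined everywhere. This holds because $\Cnoise$ is positive definite and $\Ctgm\succeq\Cnoise$.
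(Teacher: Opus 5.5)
Your proposal is correct and follows the paper's proof essentially step for step. Both arguments write each EIG as $h(\pi_{\data})+h(\pi_{\params})-h(\pi_{\data,\params})$, cancel the shared prior entropy, turn the two entropy differences into KL divergences via Lemmas~\ref{lemma:moment_matching} and~\ref{lemma:entropy_matching}, and finish with the KL chain rule. Your extra finiteness checks are sound and make explicit the splitting of expectations that the paper takes for granted: the max-entropy upper bound and additive-Gaussian-noise lower bound on $h(\pi_{\data})$, and nondegeneracy of $\pi^{\Op}_{\data,\params}$ via $\Ctgm \succeq \Cnoise$.
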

\begin{proof}
We first express the EIG in terms of differential entropies. 
Recalling that $\mfEIG = \mathbb{E}_{\pi_{\data}}\left[ \DKL{\pi_{\params\vert\data}}{\pi_{\params}}\right]$ 
and using the relation $\pi_{\params\vert\data} = \frac{\pi_{\data,\params}}{\pi_{\data}}$, we have 
\begin{align*}
\mfEIG = \mathbb{E}_{\pi_{\data,\params}}\left[ \log \frac{\pi_{\params \vert \data}}{\pi_{\params}}\right] = \mathbb{E}_{\pi_{\data,\params}}\left[\log \frac{\pi_{\data,\params}}{\pi_{\data}\pi_{\params}} \right] 
= -h(\pi_{\data,\params})+h(\pi_{\data})+h(\pi_{\params}).
\end{align*}
An analogous calculation gives $\mfEIG^{\Op} = -h(\pi^{\Op}_{\data,\params})+h(\pi^{\Op}_{\data})+h(\pi^{\Op}_{\params})$. 

Since $\pi_{\params}$ is Gaussian and matches the marginal $\pi^{\Op}_{\params}$, we have $h(\pi^{\Op}_{\params})=h(\pi_{\params})$. Thus, 
\[
\mfEIG-\mfEIG^{\Op}
=  \left(h(\pi^{\Op}_{\data,\params}) - h(\pi_{\data,\params}) \right)-\left( h(\pi^{\Op}_{\data})-h(\pi_{\data}) \right)
\]

Note that by Lemma~\ref{lemma:moment_matching}, $\pi^{\Op}_{\data,\params}$ and
$\pi_{\data,\params}$ have matching mean and covariance. Since marginalization
preserves first and second moments, $\pi^{\Op}_{\data}$ and $\pi_{\data}$ also have
the same mean and covariance matrix. 
Applying
Lemma~\ref{lemma:entropy_matching} gives:
\[
h(\pi^{\Op}_{\data,\params})-h(\pi_{\data,\params})=\DKL{\pi_{\data,\params}}{\pi^{\Op}_{\data,\params}}
\quad\text{and}\quad
h(\pi^{\Op}_{\data})-h(\pi_{\data})=\DKL{\pi_{\data}}{\pi^{\Op}_{\data}}.
\]
Hence,
$\mfEIG-\mfEIG^{\Op}
=\DKLbig{\pi_{\data,\params}}{\pi^{\Op}_{\data,\params}}-\DKLbig{\pi_{\data}}{\pi^{\Op}_{\data}}$.
We next recall that by the 
KL-divergence chain rule~\cite[Page 221]{Murphy2023}, 
\[
\DKL{\pi_{\data,\params}}{\pi^{\Op}_{\data,\params}}
= \DKL{\pi_{\data}}{\pi^{\Op}_{\data}}
+ \mathbb{E}_{\pi_{\data}}\left[ \DKL{\pi_{\params \vert \data}}{\pi^{\Op}_{\params \vert \data} }\right]. 
\]
Therefore, 
$\mfEIG - \mfEIG^{\Op}
=
\mathbb{E}_{\pi_{\data}}\big[ \DKLbig{\pi_{\params \vert \data}
}{\pi^{\Op}_{\params \vert \data} }\big] \ge 0$. 
\end{proof}
 
This lower bound provides insight into when maximizing the \fo{} EIG  can be
effective for obtaining good quality sensor placements. Namely, the gap
$\mathbb{E}_{\pi_{\data}}\big[ \DKLbig{\pi_{\params \vert \data}
}{\pi^{\Op}_{\params \vert \data} }\big]$ measures how well the error-corrected
posterior approximates the true posterior on average over the data distribution.
If this gap does not vary significantly across designs,
then the \fo{} EIG 
roughly preserves the relative ranking of designs and provides a
suitable proxy for $\mfEIG$. Our numerical results in Section~\ref{subsec:subsurface-permeability-inference}
demonstrate the effectiveness of using the \fo{} EIG in the OED problem formulation. 

In the  case $\Cov_{\pi}(\lfwd(\params),\params)
\equiv 0$, the \fo{} linear model captures no (global) linear dependence between data and the parameters. Consequently, the approximate posterior induced by the \fo{} linear model
coincides with the prior (see (\ref{eq:BAEpostcov})), and the associated lower bound on the EIG correctly collapses to zero (see (\ref{eq:EIGbound})). This scenario can be easily detected in practice since $\Cov_{\pi}(\lfwd(\params),\params)$ is estimated when constructing the global
linear model. Also, as shown in~\cite[section
3.1.3]{CuiKovalHerzogEtAl2025}, the statistics of the accurate forward
model that enter the lower bound through $\Ctgm$ (specifically the covariance
$\Cff$) can be used to construct an upper bound on the (exact) EIG.  Comparing
the gap between the two bounds provides a practical diagnostic for the quality
of the \fo{} global linear model at negligible cost.

\subsection{Computational considerations}\label{subsec:nonlin_computations}
In practice, the statistics of the exact PTO map, i.e., ($\Cgm,
\Cgg, \overline{\FF}$), are unknown a priori.  We therefore approximate them
using the standard unbiased Monte Carlo sample-mean, sample-covariance and
sample-cross-covariance estimators using samples $\params^{(i)} \sim
\pipr$ and the corresponding model evaluations $\FF^{(i)} = \FF(\params^{(i)})$
for $i \in \{1, \ldots, N\}$.
Each sample requires one (or more) PDE solve(s) to evaluate
$\FF(\params^{(i)})$. However, these evaluations can be performed in parallel. Once
the statistics have been computed, the assembly and optimization of the
\fo{} EIG~\eqref{eq:error_corrected_EIG} is PDE-free, involving
only linear algebra operations on the precomputed quantities.

A key advantage of the proposed approach is that it is non-intrusive. The precomputation
requires only access to samples $\{(\params^{(i)},\FF(\params^{(i)})\}_{i=1}^N$,
with no need for adjoint solves or Jacobian computations. This makes the method
applicable even in settings where the forward model is a black box, or where the
model itself is unavailable and only input-output data are provided.

We also note a few caveats of the proposed approach.
While the BAE approach results in a Gaussian posterior that is
independent of the choice of linearization $\sfwd$ in the infinite-sample limit,
in the finite-sample setting, the choice of $\sfwd$ can affect the accuracy of
the Monte Carlo estimators; see~\cite{NicholsonPetraVillaEtAl2023}. Choosing a
linearization $\sfwd$ that captures the variability of the nonlinear map $\FF$
can reduce the variance of the approximation error $\eps(\params)$ and hence the
variance of the resulting estimators. A variance reduction strategy
based on Taylor approximations of the forward model has been explored
in~\cite{NicholsonVuchkovVillaEtAl2025}, and can be combined with the approach
presented here to potentially reduce the number of required samples.  One can
also use quasi-Monte Carlo methods to accelerate the convergence of the sample
statistics.

Further, we note that the \fo{} EIG~\eqref{eq:error_corrected_EIG} is not
guaranteed to be submodular due to the non-diagonal structure of $\Ctgm$.  Therefore,
the approximation guarantees in Section~\ref{subsec:stoch_greedy} do not hold.
Nevertheless, the stochastic cost-benefit greedy method performs well in the numerical experiments of
Section~\ref{subsec:subsurface-permeability-inference}. This is consistent with
prior experiences with good performance of greedy sensor placement for general
non-submodular
functions~\cite{AlexanderianNicholsonPetra2024,WuChenGhattas2023,KovalNicholson2025}.
Finally, without submodularity, lazy evaluations within the greedy
procedure are neither justified nor recommended. 

\begin{rmk}\label{rmk:linear_components}
When the composite PTO map $\FF$ contains both nonlinear and linear components,
the \fo{} construction preserves the linear components exactly, applying the BAE
correction only to the nonlinear components. This simplifies both the model
construction and the required precomputations.  In particular, if $\FF(\params)
= [\bvec{F}_{\!\mathrm{n}}(\params)^{\top} \;
(\bvec{F}_{\!\mathrm{l}}\params)^{\top}]^{\top}$, where the nonlinear
($\bvec{F}_{\!\mathrm{n}}$) and linear ($\bvec{F}_\mathrm{l}$) components are
stacked together, then $\Ofwd = [(\bvecS{\Gamma}_{\bvec{F}_{\!\mathrm{n}\params}}
\Cmm^{-1})^{\top} \;  \bvec{F}_{\!\mathrm{l}}^{\top}]^{\top}$, so the linear
components $\bvec{F}_{\!\mathrm{l}}$ enter $\Ofwd$ exactly and do not require
statistical estimation.  Additionally, the mean $\widetilde{\totnoise}$ and
covariance $\Ctgm$ of the total error differ from $\overline{\noise}$ and
$\Cnoise$ only in the blocks corresponding to the nonlinear components.  These
blocks depend solely on the statistics of $\bvec{F}_{\!\mathrm{n}}$, and thus only
those need to be estimated during precomputation. 
\end{rmk}

\section{Numerical Examples}\label{sec:NumEx}
In this section, we present computational results in the context of two model
inverse problems. The first one, considered in
Section~\ref{subsec:heat-source-inversion}, is a linear inverse problem
involving source inversion in a steady advection-diffusion problem. The 
purpose of that study is to examine the performance of the proposed stochastic
cost-benefit greedy method for optimal placement of multi-type sensors in a
setting where the theoretical guarantees established in
Section~\ref{subsec:stoch_greedy} hold.  Then, in
Section~\ref{subsec:subsurface-permeability-inference}, we consider multi-type
sensor placement for a nonlinear inverse problem governed by a porous medium
flow model, where we seek to find optimal placement of pressure and
concentration sensors. The study in that section demonstrates the effectiveness of our
proposed approach for nonlinear inverse problems.

\subsection{Source Inversion in an advection-diffusion problem}\label{subsec:heat-source-inversion}
We consider an inverse problem governed by a steady advection-diffusion problem
to demonstrate the advantages of Algorithm~\ref{alg:stochastic-cb-greedy}.  In
the formulation of this Bayesian inverse problem, we use a
Gaussian prior and an additive Gaussian noise model with uncorrelated
measurement errors.  Under these assumptions, the expected information gain
(EIG), as defined in \eqref{eq:mfEIG}, is a monotone submodular function.
Therefore,  the theoretical guarantees from Section~\ref{subsec:stoch_greedy}
hold.

\subsubsection{Problem setup}
We consider the inverse problem of using sensor data to 
estimate the volume source term $\parm$ 
in the following advection-diffusion equation 
\begin{equation}\label{eq:forward-heat-problem}
\begin{aligned}
    -\nabla\cdot(\kappa \nabla u) + \nabla\cdot( u \bm{v}) = \parm&\quad\text{in}~\Omega,\\
u = 0&\quad\text{on}~\Gamma_{D},\\
-(\kappa\nabla u)\cdot \bm{n} = 0&\quad\mathrm{on}~\Gamma_{N}.
\end{aligned}
\end{equation}
Here, $\Omega = (0,1)^{2}$,
$\Gamma_{D}=[0,1]\times \{0, 1\}$, and $\Gamma_{N} = \{0, 1\} \times[0, 1]$. In the present study, we fix 
$\kappa = 10^{-1}$ and $\bm{v} = [0 \; -1]^\top$.
For the present computational study, we consider a ground-truth inversion 
parameter $m$ that is localized in the center of the domain;
see Figure~\ref{fig:config}~(left).  The corresponding state variable is
shown in the middle panel of the figure. 

The inverse problem uses measurements of $u$ to estimate $\parm$.  
We consider three types of sensors.
The first two sensor types take point measurements of $u$, but with different noise levels.
Sensors of the third type observe
the average value of $u$ over small regions.
The corresponding data models can then be written as
\begin{equation}\label{eq:inverse-source-data-model}
    \data_{i} = \mathcal{B}_{i} u + \noise_{i}, \quad 
    \noise_{i}\sim \mathcal{N}(\mathbf{0},\sigma_{i}^{2}\mathbf{I}),\quad i=1,2,3,
\end{equation}
where $\mathcal{B}_{1}$ and $\mathcal{B}_{2}$ are point evaluation operators, and
 $\mathcal{B}_3$
records the average of $u$ over a ball (within $\Omega$) of a prescribed radius
around the sensor.  

Note that while the first two sensor types are conceptually associated with
pointwise measurements, the corresponding observation operators are still
mathematically defined as localized averaging operators. This ensures
that the observation operators are bounded. When the PDE solution possesses
sufficient regularity, these localized averages may be interpreted as
approximations of exact point evaluations. On the other hand, in the case of
$\mathcal{B}_3$, the average state is computed within a ball around the sensor
whose radius is determined by the properties of the sensing device.
For simplicity, in what follows, we continue to refer to type one and two sensors 
as point evaluation sensors and the third type as point average sensors. 

The point evaluation sensors are placed on a grid of $12\times
12$ equally spaced candidate points in $\Omega$.  We allow the placement
of one or both sensor types in each location.  We set the cost of the low and high quality
sensors to 1 and 2.8, respectively.
Here, we chose $\sigma_{1}^{2}$ and $\sigma_{2}^{2}$ to produce a signal-to-noise ratio (SNR) of 20 and 10, respectively.
The point average sensors are placed on a candidate sensor grid of $6\times 6$
equally spaced points in $\Omega$.  Each of these sensors collects the average value of
$u$ on the intersection of $\Omega$ and a disc of radius 0.20 centered at the
grid point. 
The cost of each point average sensor is 5, and $\sigma_{3}^{2}$ was chosen so
the SNR was 20.
We thus have a data vector 
$\data = [\data_{1}^\top \; \data_{2}^\top \; \data_{3}^\top]^\top\in
\mathbb{R}^{12^{2}+12^{2}+6^{2}}=\mathbb{R}^{324}$.  
The experimental configuration is illustrated in Figure~\ref{fig:config}~(right).
\begin{figure}[t]
    \centering
        {\includegraphics[width=0.30\textwidth]{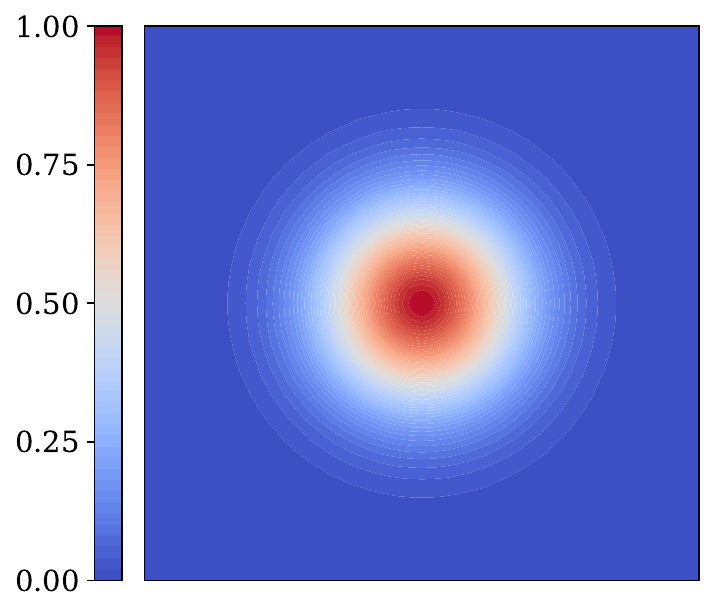}}
        {\includegraphics[width=0.32\textwidth]{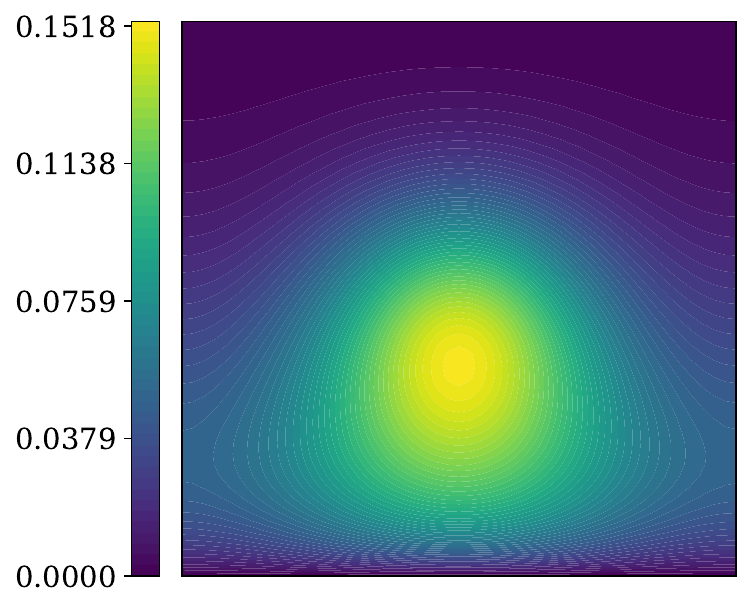}}
    {\includegraphics[width=0.255\textwidth]{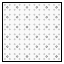}}
    \caption{Left: the ground-truth parameter $m$. Middle: the corresponding 
    state variable. 
    Right: a schematic of the multi-type sensor grid. The candidate locations for 
    point evaluation sensors are indicated by circles and those of point average sensors 
    by diamonds.}
    \label{fig:config}
\end{figure}

We next specify our choice of prior.  
In the infinite-dimensional setting, we use a Gaussian prior measure 
with homogeneous mean and covariance operator given by an
inverse squared elliptic differential operator.
Specifically, the prior covariance operator is defined as $\mathcal{A}^{-2}$
with 
$\mathcal{A}\coloneq
\gamma\Delta + \delta I$. In the present study, we use $\gamma = 0.252$ and $\delta = 0.7$.
We use homogeneous Neumann boundary conditions when computing applications of 
$\mathcal{A}^{-1}$.
For further details on defining such Gaussian priors in a function space setting, 
we refer to~\cite{Stuart2010}.

Finally, the inverse problem is discretized using linear Lagrangian finite elements on a $50\times 50$ uniform triangular mesh.
In total the parameter $\params$ has 2601 degrees of freedom.
The experiments were implemented in Python using
FEniCSx~\cite{BarattaDeanDokkenEtAl2023}.

\subsubsection{Computational Results}
In our numerical tests, we use the (deterministic) best-greedy method 
Algorithm~\ref{alg:greedy-knapsack} as a baseline.  Note that in the present
example, the OED criterion $\mfEIG$ is submodular. Thus, we can use lazy
evaluations to accelerate the computations. 

In our first set of numerical experiments, we consider a fixed budget of $B =
30$ for the knapsack constrained OED problem~\eqref{eq:optim_main}.  To assess the quality of
sensor placements obtained using the proposed stochastic cost-benefit greedy
algorithm, we perform 500 runs of  Algorithm~\ref{alg:stochastic-cb-greedy} with
approximation parameters $\epsilon\in\{0.001, 0.01, 0.1\}$.
Figure~\ref{fig:linear-oed:hist-and-map} compares the performance of these
trials with the best-greedy algorithm.  Let $\mfEIGb$ and $\mfEIGe$ denote the
output of the best-greedy and stochastic cost-benefit greedy methods,
respectively.  The histogram in Figure~\ref{fig:linear-oed:hist-and-map}~(left) 
depicts the relative gap $(\mfEIGb - \mfEIGe) /
\mfEIGb$ over the computed 100 realizations of $\mfEIGe$ and our choices of
$\epsilon$.  Note that smaller values of $\epsilon$ lead to a more concentrated
distribution.  This behavior is expected as smaller $\epsilon$ values result in
an inner iteration closer to that of the deterministic algorithm.  Moreover, the
stochastic approach can also outperform the best-greedy algorithm in some of the
trials. 

To provide further insight, we also consider the MAP points computed using the
computed sensor placements. Figure~\ref{fig:linear-oed:hist-and-map}~(middle) 
shows the MAP point computed using a best-greedy sensor placement.  A
corresponding MAP estimation result is provided using a representative sensor 
placement obtained using the stochastic cost-benefit greedy method. 
\begin{figure}[t!]
\centering
\begin{tikzpicture}
	\node[anchor=center, inner sep=0] (left) at (7.8, 0)
    {\includegraphics[width=0.6\textwidth]{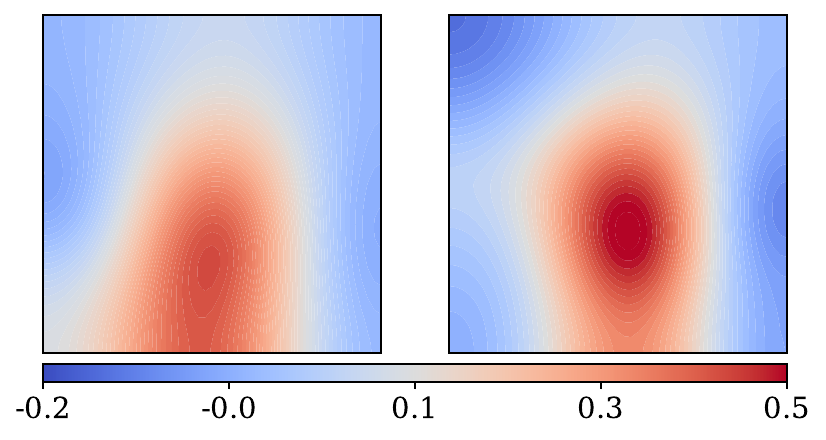}};
	\node[anchor=center, inner sep=0] (left) at (0, 0.382)
    {\includegraphics[width=0.4\textwidth]{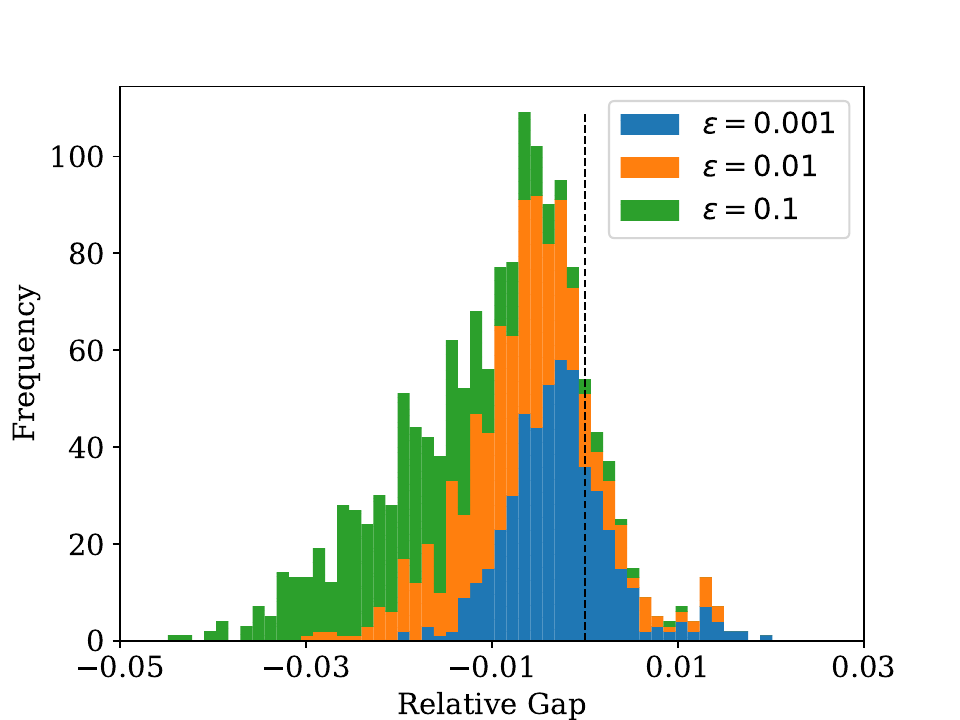}};
\end{tikzpicture}
\caption{Left: Histogram of the relative gap between solutions found by stochastic 
greedy and the deterministic greedy solution. 
Right: MAP estimates found using 
best-greedy (middle) and stochastic greedy with $\epsilon=10^{-3}$~(right).} 
\label{fig:linear-oed:hist-and-map}
\end{figure}

We next consider the computed sensor placements themselves in 
Figure~\ref{fig:inverse_heat_sensor_placements}.  The sensor placement obtained
using the best-greedy method is displayed in the left panel. The middle and
right panels summarize sensor placements obtained from 500 runs of the stochastic
cost-benefit greedy method with approximation parameters of $\epsilon = 10^{-3}$
and $\epsilon = 10^{-2}$. In these figures the color intensity
indicates the frequency of selecting specific sensors.  A potential advantage of
the stochastic cost-benefit greedy procedure is its ability to explore a broader
set of sensor placements.  The more compelling reason to use the stochastic
approach is its superior computational performance over the deterministic
best-greedy approach while consistently producing good quality sensor
placements. This is studied next. 

\begin{figure}[t!]
\centering
\begin{tikzpicture}
	\node[anchor=center, inner sep=0] (left) at (0, 0)
    {\includegraphics[width=0.28\textwidth]{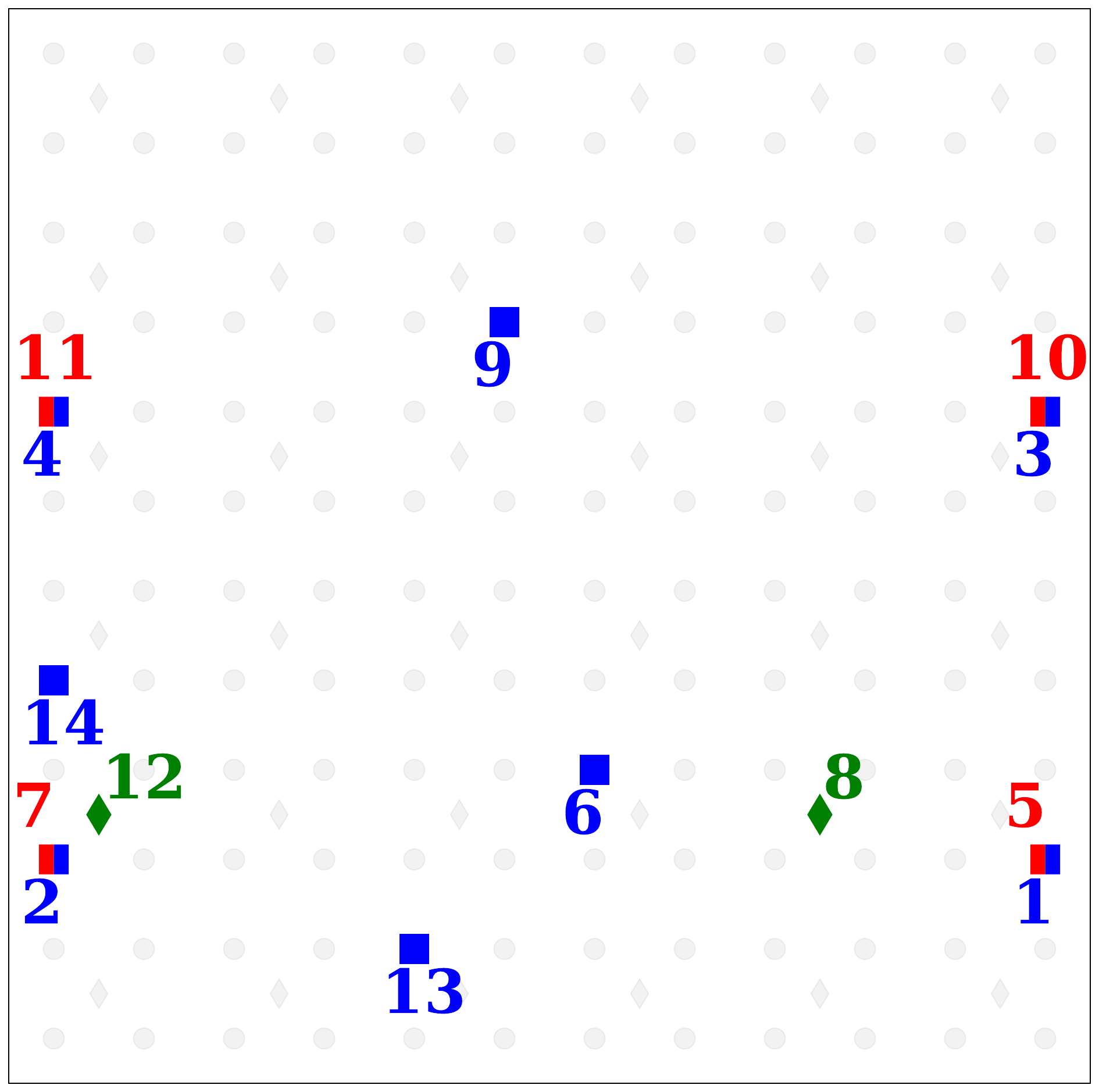}};
	\node[anchor=center, inner sep=0] (right) at (5.25, 0)
    {\includegraphics[width=0.28\textwidth]{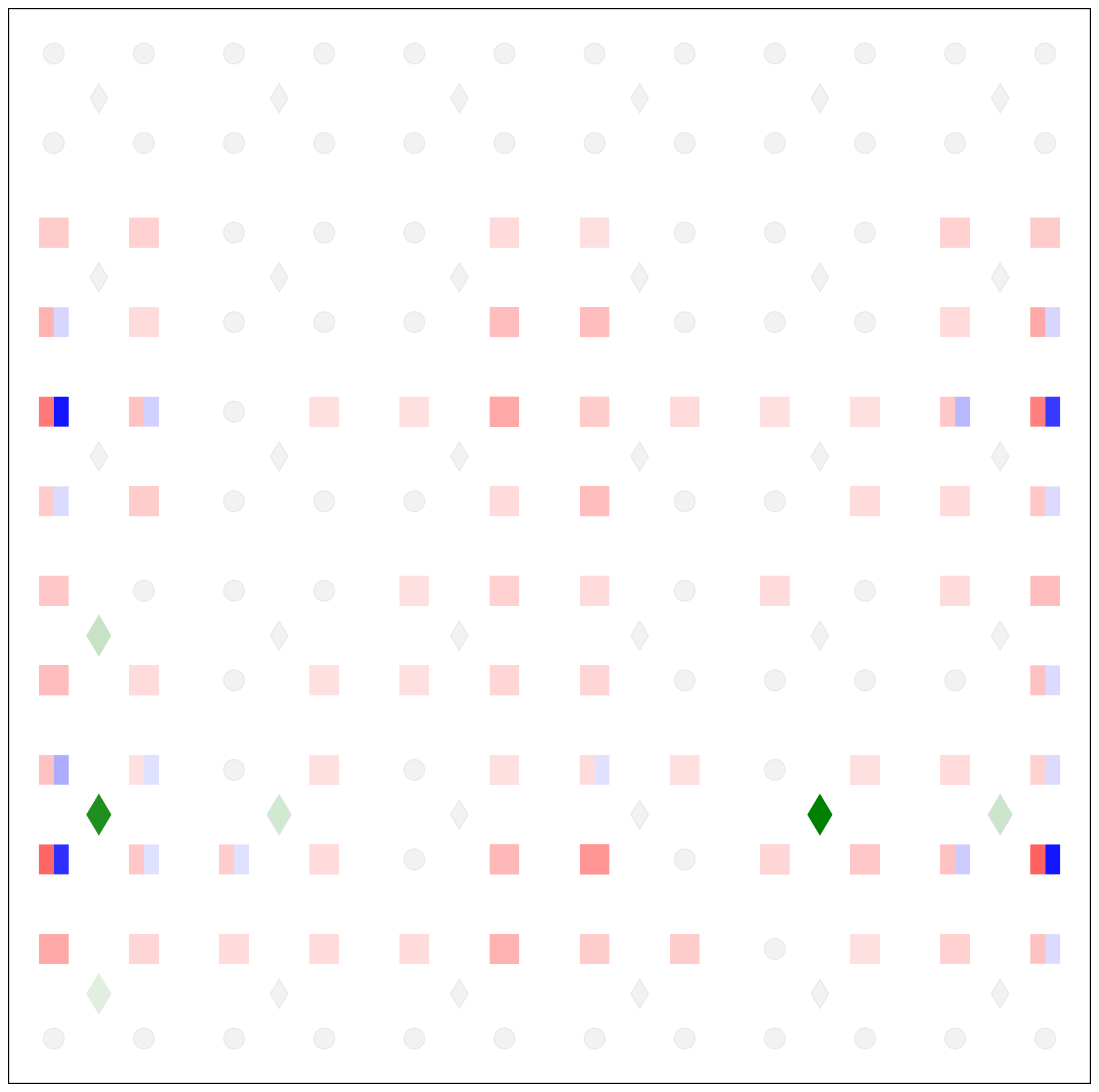}};
	\node[anchor=center, inner sep=0] (right) at (10.5, 0)
    {\includegraphics[width=0.28\textwidth]{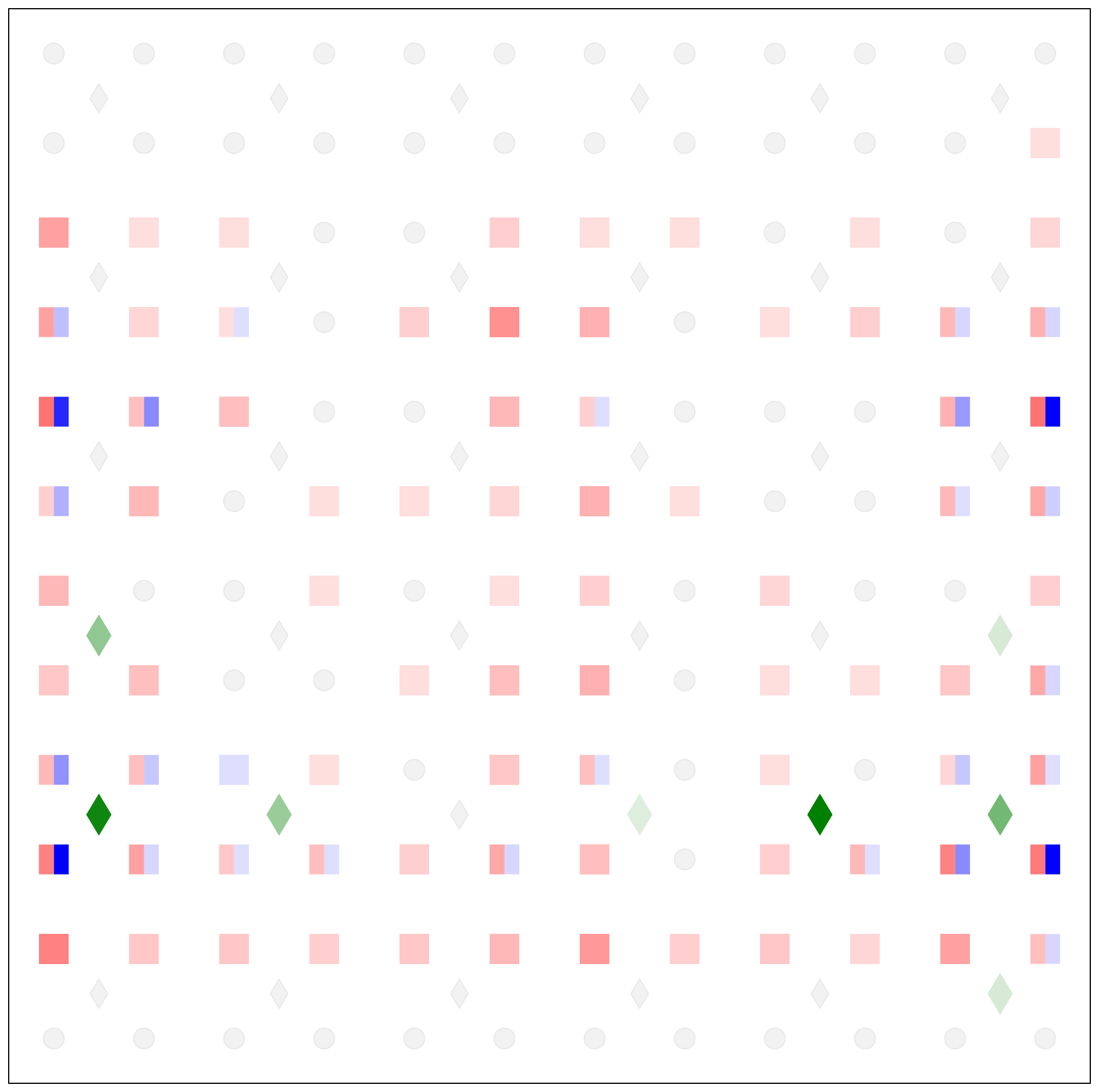}};
    \begin{scope}[anchor=center,text=black,align=center,pos=0.5]
        \node at (0, 2.4) {BG};
        \node at (5.25, 2.4) {SG ($\epsilon=10^{-3}$)};
        \node at (10.5, 2.4) {SG ($\epsilon=10^{-2}$)};
    \end{scope}
    \end{tikzpicture}
    \caption{
        Sensor placements found using Algorithms~\ref{alg:greedy-knapsack} and~\ref{alg:stochastic-cb-greedy}.
        The red (blue) squares correspond to a high (low) quality point evaluation sensor placement. 
        The green diamonds correspond to a point average sensor placement. 
        Left: sensor placements found by Algorithm~\ref{alg:greedy-knapsack}.
    Middle and right: sensors frequently chosen by Algorithm~\ref{alg:stochastic-cb-greedy} with $\epsilon = 0.01$ and $\epsilon=0.001$, respectively. The marker opacity indicates the frequency.}
    \label{fig:inverse_heat_sensor_placements}
\end{figure}

In Figure~\ref{fig:inverse-heat-deterministic-vs-stochastic}, 
we 
compare the performance of best-greedy and stochastic cost-benefit greedy methods 
over varying budget values. For each budget $B$ and $\epsilon$, we performed 500 trials.
The effect of $\epsilon$ is apparent:
smaller values close the performance gap between the stochastic and greedy
algorithm.  We additionally see the computational advantages of
Algorithm~\ref{alg:stochastic-cb-greedy}.  The computational cost of the
best-greedy method increases as the budget increases, while the cost of the
stochastic algorithm is asymptotically independent of the budget.  The initial
higher cost for smaller budget values is due to the presence of $B$ in the
denominator of~\eqref{eq:T-definition}. 

\begin{figure}[t!]
\centering
\begin{tikzpicture}
	\node[anchor=center, inner sep=0] (left) at (0, 0)
    {\includegraphics[width=0.4\textwidth]{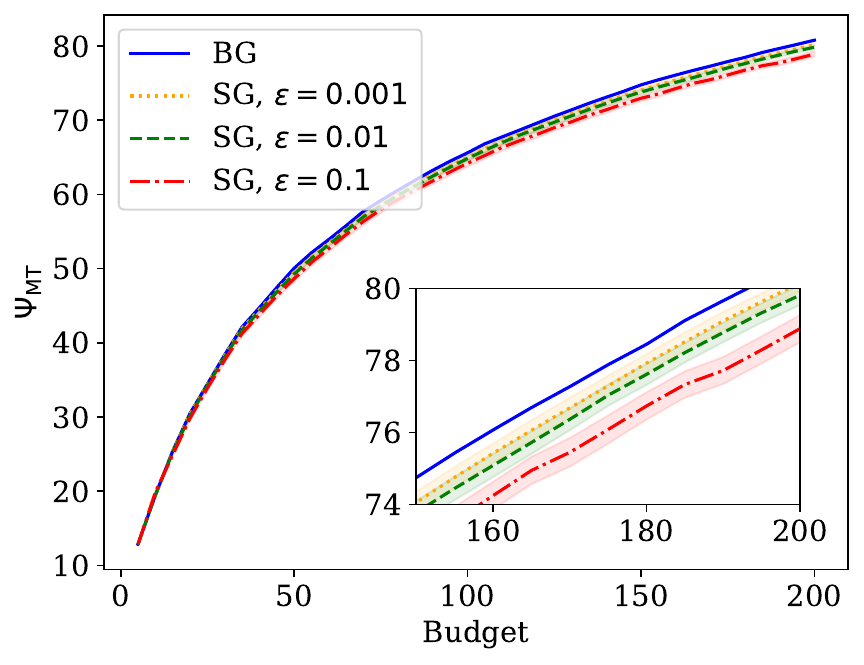}};
	\node[anchor=center, inner sep=0] (left) at (7.5, 0)
    {\includegraphics[width=0.425\textwidth]{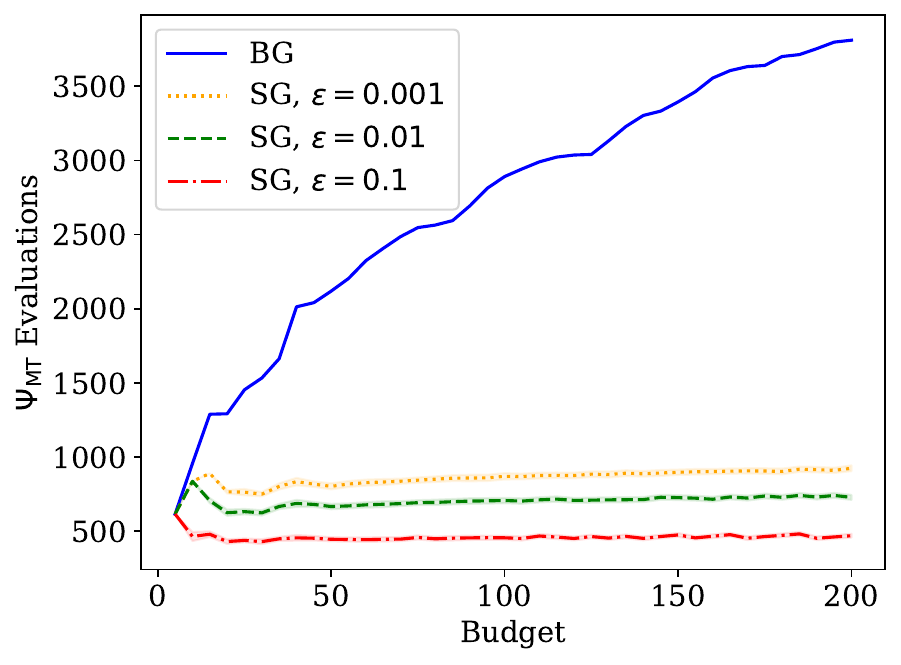}};
    \begin{scope}[anchor=center,text=black,align=center,pos=0.5,font=\bfseries]
        \footnotesize
        \node at (0.1, 2.85) {Performance};
        \node at (7.6, 2.85) {Computational Cost};
    \end{scope}
\end{tikzpicture}
\caption{Performance comparison of the deterministic algorithm against the stochastic algorithm. 
The solid blue line shows the performance of the deterministic greedy algorithm. 
The dotted, dashed and dash-dotted lines show the performance of the stochastic greedy algorithm 
with $\epsilon = 0.001, 0.01, 0.1$. The plots for the stochastic algorithms are the mean objective values.
The shaded regions around the curves are within two standard deviation of the mean.
}
\label{fig:inverse-heat-deterministic-vs-stochastic}
\end{figure}

\subsection{Subsurface Permeability Inference Using Two Sensor Types}\label{subsec:subsurface-permeability-inference}
To illustrate our approach for nonlinear Bayesian inverse problems, we consider
a model inverse problem motivated by groundwater quality monitoring
applications.  Specifically, we consider inference of a log-permeability field
in heterogeneous medium using measurements of hydraulic head pressure or
solute concentration at a few observation wells. 

\subsubsection{Problem setup}\label{subsubsec:subsurface_setup}
We first describe the domain $\Omega$ of the problem.
We let $\Omega = (0, 1)^2$ and denote its 
left, right, top and bottom boundaries by $\Gamma_L \coloneq \{0\}
\times [0,1], \Gamma_R \coloneq \{1\} \times [0,1], \Gamma_T \coloneq [0,1]
\times \{1\}, \Gamma_B = [0,1] \times \{0\}$, respectively.
We consider the relationship between the pressure $p$ and the log-permeability field
$\paramsFunc$ as described by the following elliptic PDE:
\begin{equation}
\begin{split}
-\nabla \cdot \left(e^{\paramsFunc} \nabla p \right) &= 0  \quad \text{ in } \Omega, \\
e^{\paramsFunc} \nabla p \cdot \bvec{n} &= 0  \quad \text{ on } \Gamma_T \cup \Gamma_B, \\
p &= 1 \quad \text{ on } \Gamma_L, \\
p &= 0 \quad \text{ on } \Gamma_R.
\end{split}
\label{eq:PDE_elliptic}
\end{equation} 

The transport of the solute through the medium depends on the pressure and
permeability field. We assume the solute is naturally present in the aquifer and
reaches a steady state at the time of measurement. The solute is advected by the
Darcy velocity field $\bvec{v}(\paramsFunc) = -e^{\paramsFunc(\bvec{x})}\nabla p$,
and diffused according to a known constant diffusivity coefficient of $D = 0.001$. The
steady-state concentration $c$ is thus governed by the steady 
advection-diffusion PDE: 
\begin{equation}\label{eq:PDE_ad-diff}
\begin{aligned}
-\nabla(D \nabla c) + \bvec{v}(\paramsFunc) \cdot \nabla c &= 0  \quad &&\text{ in } \Omega, \\
\left(\bvec{v}(\paramsFunc)c - D \nabla c\right)\cdot \bvec{n} &= 0  \quad &&\text{ on } \Gamma_T \cup \Gamma_B, \\
- D \nabla c \cdot \bvec{n} &= 0 \quad &&\text{ on } \Gamma_R, \\
c &= c_{\mathrm{in}}(y) \quad &&\text{ on } \Gamma_L .
\end{aligned}
\end{equation}
Here, solute enters the domain at the left boundary $\Gamma_L$ with inflow concentration $c_{\mathrm{in}}(y) = \exp(-\frac{(y-0.7)^2}{2(0.1)^2})$. A zero net
flux boundary condition is imposed on the top and bottom boundaries, $\Gamma_T
\cup \Gamma_B$, so that these boundaries are impermeable. A homogeneous
Neumann condition is imposed on the right boundary $\Gamma_R$, which allows the
solute to exit $\Omega$ via advective transport.   

To formulate the sensor selection problem, we fix a candidate set of
locations in the domain, shown in Figure~\ref{fig:nonlin_setup}, and assume
measurements are taken at a subset of these locations. Borehole drilling costs depend on factors such as depth, diameter, rock type, and casing
type~\cite{Everett1976,EPA1997}.  Typically, boreholes used for hydraulic head measurements require smaller diameters and less expensive casing than
those installed for measuring solute concentrations. Thus, the total
cost of concentration measurements, including borehole drilling and subsequent
laboratory testing, is generally higher than that of pressure measurements. In
our setup, the cost of a concentration measurement is taken to be 1.5 times that
of a pressure measurement. 

For practical simplicity, we assume that at most one sensor type is
permitted at each candidate location.  This assumption introduces packing constraints on
the feasible designs, and the resulting optimization problem does not fit
directly into the framework in Section~\ref{subsec:stoch_greedy}.  We
note, however, that greedy approaches utilizing continuous extensions have been
introduced for the more general case of multiple knapsack
constraints~\cite{BadanidiyuruVondrak2014,KulikShachnaiTamir2009}.  In
practice, the only modification to the deterministic and stochastic greedy algorithms 
is that when a sensor is selected at a given
location, both sensor types at that location are removed from the candidate
set. 

Measurement noise for both types is modeled as
additive and Gaussian with zero mean and standard deviations $\sigma_p =
\sigma_c = 0.05$ for pressure and concentration, respectively.  This corresponds
to approximately $5 \%$ of the $L_{\infty}$ norm of each state.

The log-permeability field $\paramsFunc$ is discretized on a triangular mesh
with $8192$ elements using piecewise linear basis functions, resulting in a
coefficient vector $\bvec{\params} \in R^{4225}$.  We assume a Gaussian prior 
$\mathcal{N}(\mpr,\Cmm)$. We take $\mpr
\equiv 0$, and define the prior covariance matrix $\Cmm$ as the finite element
discretization of the inverse squared elliptic operator $\mathcal{A}^{-2}$,
where $\mathcal{A} = \gamma \Delta + \delta I$, with $\gamma = 0.1, \delta =
0.5$. 
To mitigate undesirable boundary effects that can arise do the use of
the PDE operator, we impose Robin boundary conditions as described
in~\cite{RoininenHuttunenLasanen2014}. A sample from this prior, as well as
the corresponding pressure and concentration fields
solving~\eqref{eq:PDE_elliptic} and~\eqref{eq:PDE_ad-diff} are visualized
in Figure~\ref{fig:nonlin_setup}.

The elliptic forward problem~\eqref{eq:PDE_elliptic} is solved using a mixed
formulation, employing first-order Brezzi--Douglas--Fortin--Marini elements for
the velocity field $\bvec{v}$ and piecewise constant elements for the pressure
$p$~\cite{BrezziDouglasMarini1985}. The steady-state transport
problem~\eqref{eq:PDE_ad-diff} is solved using a streamline upwind
Petrov--Galerkin discretization in space to stabilize the advective
term~\cite{ElmanSilvesterWathen2014}. Both forward problems are implemented
in FEniCS~\cite{LoggMardalWells2012}. 

\begin{figure}[t]
\centering
    {\includegraphics[width=\textwidth]{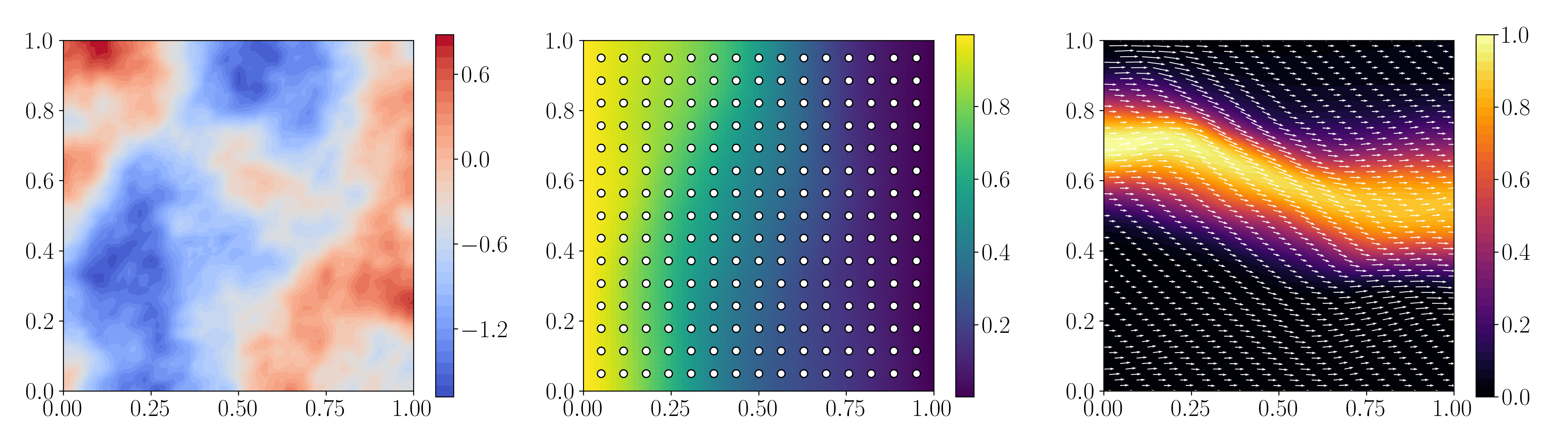}}
\caption{A sample log-permeability field (left) with the corresponding pressure (middle) and concentration (right). In the middle panel, white dots mark the candidate sensor locations.}
\label{fig:nonlin_setup}
\end{figure}

\subsubsection{Computational results}\label{subsubsec:subsurface_results}
We fix a total budget of $B = 15$ and compute the optimal designs using the
stochastic cost-benefit greedy approach outlined in Algorithm~\ref{alg:stochastic-cb-greedy}.  For the
present nonlinear inverse problem, we follow the strategy of using the \fo{}
global linear model, as detailed in Section~\ref{sec:nonlinOED}.  We use $N =
5{,}000$ samples to compute the first and second order statistics required by
the BAE approach (see Section~\ref{subsec:nonlin_computations}), as this was
found to produce sufficiently stable estimates empirically.  

We computed sensor placements using deterministic best-greedy approach
(Algorithm~\ref{alg:greedy-knapsack}) and the stochastic greedy method
(Algorithm~\ref{alg:stochastic-cb-greedy}).  For the latter, we ran the
algorithm with several choices of the approximation parameter $\epsilon$, 
with $50$ independent runs per $\eps$; see 
Figure~\ref{fig:transport_designComp}. We observe  clearly preferred sensor
regions for concentration and pressure measurements, which become
increasingly localized as $\epsilon$ decreases.  
In this problem, averaged over 50 runs, the stochastic-greedy runs
were approximately $12.6, 7,$ and $4.5$ times faster than the best
greedy approach for the tested values of $\epsilon = 0.1,0.01,0.001$,
respectively.
\begin{figure}[ht!]
\centering
\begin{tikzpicture}
	\node[anchor=south west, inner sep=0] (left) at (-0.5,0)
    {\includegraphics[width=\textwidth]{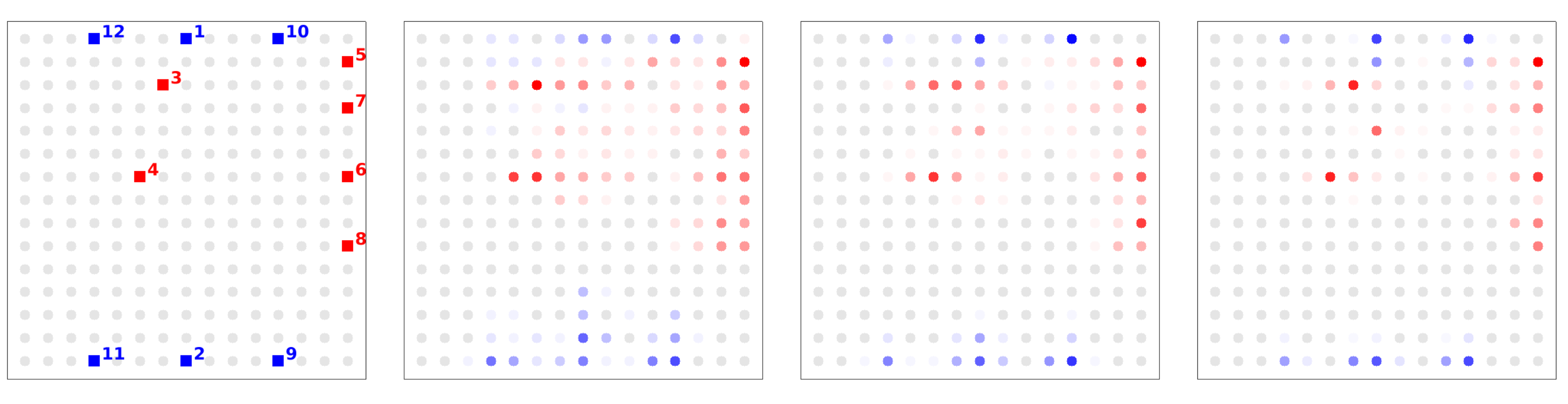}};
    \node at (1.4,4)  {{BG}};
    \node at (5.1,4)  {{SG} ($\epsilon = 10^{-1}$)};
    \node at (9.3,4)  {{SG} ($\epsilon = 10^{-2}$)};
    \node at (13.2,4) {{SG} ($\epsilon = 10^{-3}$)};
\end{tikzpicture}
\caption{Visualizations of the optimal designs for the coupled transport
problem. In the left figure, the blue and red squares mark the optimal pressure
and concentration sensors selected using the knapsack greedy
algorithm~\ref{alg:greedy-knapsack}. In the remaining columns, sensor placements
found using stochastic greedy with $\epsilon \in \{10^{-1},10^{-2},10^{-3}\}$ are visualized
for $50$ runs. The opacity of the circles indicate how frequently each sensor is
selected by the stochastic greedy method.}
\label{fig:transport_designComp}
\end{figure}

We next perform an empirical assessment of the quality of the optimal designs obtained 
using stochastic greedy with the \fo{} EIG as proxy for the 
exact EIG. This is particularly important in the present nonlinear
setting. Specifically, as noted in Section~\ref{subsec:nonlin_computations}, 
$\mfEIG^{\Op}$ 
is not
submodular, and thus the theoretical guarantees of the
deterministic and stochastic greedy algorithms no longer hold. 
We compute optimal sensor placements using 
\begin{itemize}[leftmargin=1em]
\item Algorithm~\ref{alg:stochastic-cb-greedy} with $\mfEIG^{\Op}$
in~\eqref{eq:error_corrected_EIG} as objective function (the proposed approach); 

\item Algorithm~\ref{alg:greedy-knapsack} with $\mfEIG^{\Op}$ as the objective function; and 
\item Algorithm~\ref{alg:greedy-knapsack} with a nested Monte Carlo (NMC) estimator for the 
EIG as the objective. 
\end{itemize}
The quality of the computed optimal designs are compared with $100$
randomly selected designs from the candidate set.  To provide an
unbiased assessment, for each of the
computed optimal designs and the random ones, we evaluate the EIG using an NMC estimator. 

Before discussing the numerical results, we briefly describe our approach for computing 
EIG via sampling. 
The standard NMC estimator of the EIG is
\begin{equation}
\EIG_{\mathrm{NMC}}(\designSubset) = \frac{1}{N_{\mathrm{out}}}\sum_{i=1}^{N_{\mathrm{out}}}\Big[ \log \likelihood(\data^{(i)}\vert \bvec{\params}^{(i)}; \designSubset) - \log \Big( \frac{1}{N_{\mathrm{in}}}\sum_{j=1}^{{N_{\mathrm{in}}}}\log \likelihood(\data^{(i)}\vert \bvec{\params}^{(i,j)}; \designSubset) \Big) \Big],
\label{eq:NMC}
\end{equation}
where $\{ \bvec{\params}^{(i)}, \data^{(i)}\}_{i=1}^{N_{\mathrm{out}}} \sim
\pi_{\params,\data}$, and $\{ \bvec{\params}^{(i,j)}\}_{j=1}^{N_{\mathrm{in}}}
\sim \pi_{\params}$ for $i =1 \ldots, N_{\mathrm{out}}$.  In our setting, model
evaluations are expensive, and the noise level is relatively
small, leading to highly concentrated likelihoods. As a result, obtaining
reliable estimates using standard NMC is prohibitively costly. 
We use the tractable option 
suggested in~\cite{HuanMarzouk2013}
where we reuse the outer samples to estimate the
evidence.
To prevent the self-likelihood term $\likelihood(\data^{(i)} \vert \params^{(i)}; \designSubset)$ from dominating the sum under concentrated likelihoods, we exclude it via a leave-one-out estimator~\cite{PooleOzairVanDenOordEtAl2019}:
\begin{align*}
\pi_{\data}(\data^{(i)}) \approx \frac{1}{N_{\mathrm{out}}-1}\sum_{j=1, j\neq i}^{{N_{\mathrm{out}}}} \likelihood(\data^{(i)}\vert \bvec{\params}^{(j)}; \designSubset).
\end{align*}
While this variation of NMC does not ensure asymptotically unbiased estimates
of the EIG, we found that it performed significantly better than standard NMC
within our sample budget.

A comparison of the optimal designs obtained using the greedy approaches and
randomly chosen designs is shown in Figure~\ref{fig:transport_designCompEIG}~(left).  Observe
that all greedy designs significantly outperform the random designs, indicating
that the greedy approaches perform well in practice.  For the deterministic
greedy algorithm combined with the NMC estimator, we used $50{,}000$ samples to
construct the design (distinct from those used for performance evaluation). 
Although the NMC-based approach yields slightly better designs, the marginal
performance gain comes at a significantly higher computational cost, requiring
10 times the number of PDE solves as the error-aware Gaussian approach described
in Section~\ref{subsec:nonlin_BAE}.  

Since the stochastic cost-benefit greedy algorithm produces different designs
across runs, we next compare its performance against the deterministic greedy
design statistically.  In Figure~\ref{fig:transport_designCompEIG}~(right), we
report a histogram of the relative performance gap between these designs. 
While the stochastic greedy approach generally underperforms deterministic greedy, 
the gap is typically small and it occasionally yields better designs.
This indicates that the stochastic
approach can produce competitive designs at a
significantly lower computational cost.

\begin{figure}[t!]
\centering
\begin{tikzpicture}
	\node[anchor=center, inner sep=0] (left) at (0, 0)
    {\includegraphics[width=0.45\textwidth]{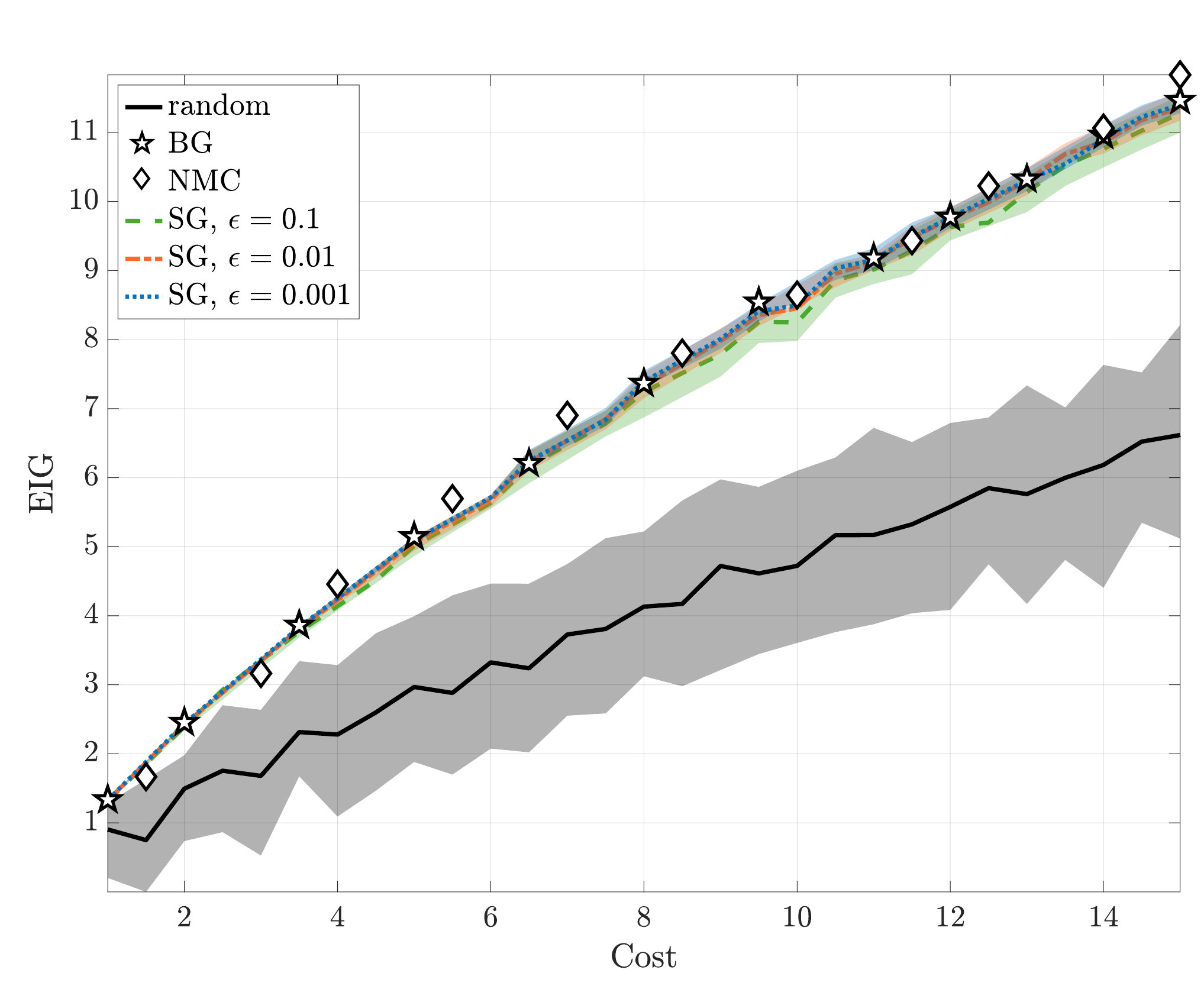}};
	\node[anchor=center, inner sep=0] (left) at (8, 0)
    {\includegraphics[width=0.45\textwidth]{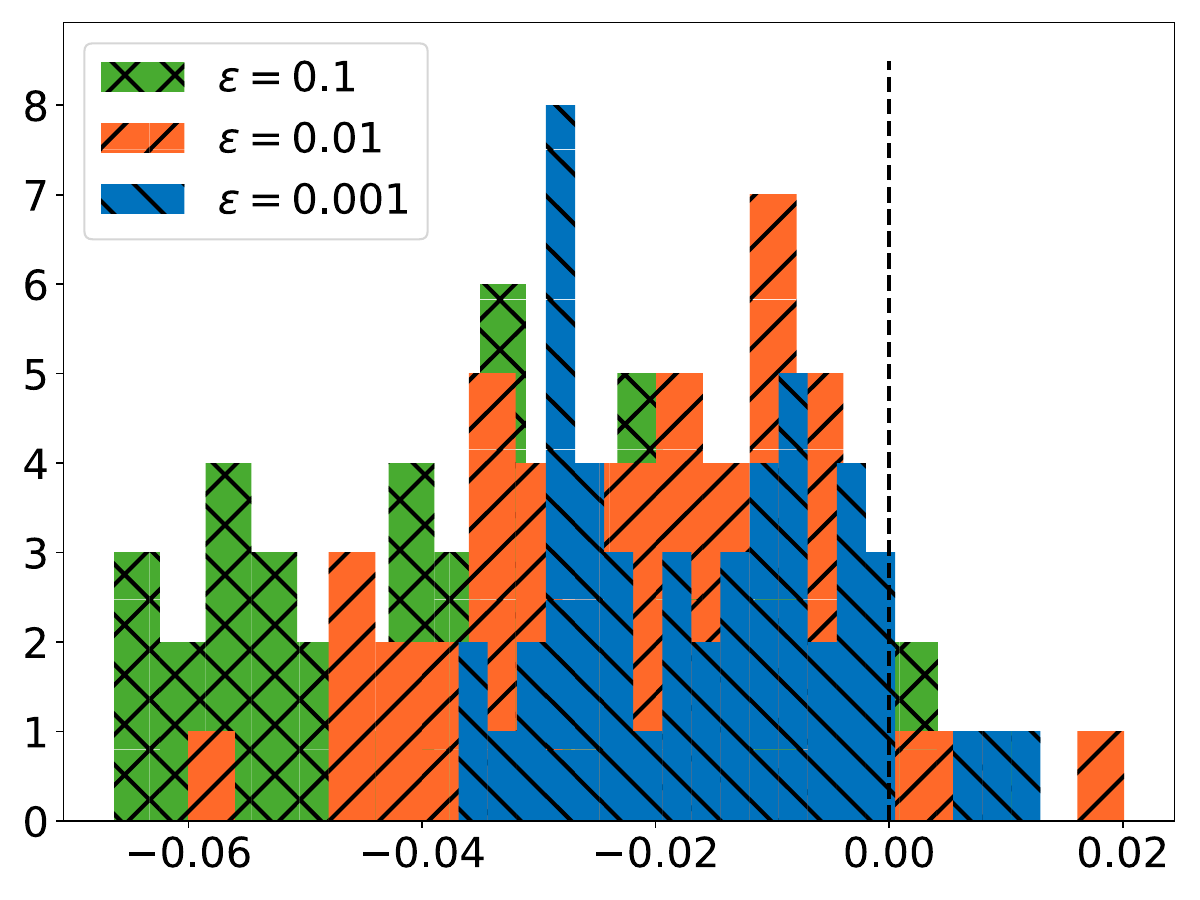}};
    \begin{scope}[anchor=center,text=black,align=center,pos=0.5,font=\bfseries]
    \end{scope}
\end{tikzpicture}
\caption{Left: EIG for designs selected
using Algorithm~\ref{alg:greedy-knapsack}, using both the NMC and Gaussian EIG estimators
(shown as white diamonds and stars, respectively). 
We also show the mean EIG and the $10-90$ percentile range using designs
obtained from 50 runs of Algorithm~\ref{alg:stochastic-cb-greedy} with three values of
$\varepsilon$ along with 100 randomly sampled designs. 
All EIG values are evaluated using an NMC estimator
with 50{,}000 samples. 
Right: Histogram of the relative gap between the deterministic greedy
solution and stochastic greedy solutions for different values of $\varepsilon$.}
\label{fig:transport_designCompEIG}
\end{figure}

\section{Conclusions}\label{sec:Conc}
We have addressed optimal placement of multi-type sensors for Bayesian linear
and nonlinear inverse problems governed by PDEs. The proposed framework provides
considerable flexibility: it allows sensors with different costs, fidelities,
and measurement types. Our theoretical results provide a solid foundation for
the proposed methods, and our numerical results indicate their effectiveness.

Our computational study of the model linear inverse problem in
Section~\ref{subsec:heat-source-inversion}, where the theoretical guarantees of
the stochastic greedy algorithm apply, illustrates the considerable advantages
of stochastic greedy over the deterministic best greedy algorithm. Our results
also provide insight into parameter inversion and data acquisition with multi-type
sensors. Further, we emphasize that the proposed stochastic cost-benefit greedy
method and its analysis are of independent interest for general submodular
optimization problems with knapsack constraints.

For nonlinear inverse problems, we use the \fo{} EIG as a proxy for the exact
EIG. Our theoretical results provide insight into the \fo{} global linear model
and establish that the \fo{} EIG is a lower bound on the exact EIG. The latter leads
to a principled formulation of the OED problem as a knapsack-constrained binary
optimization problem with \fo{} EIG as the objective. In that case, due to the
non-diagonal total error covariance matrix of the error-corrected data
likelihood, submodularity is lost and the theoretical guarantees of the greedy
methods do not apply. However, our numerical results in
Section~\ref{subsec:subsurface-permeability-inference} demonstrate the
effectiveness of the proposed stochastic greedy approach. At a significantly
reduced cost, the stochastic greedy method yields sensor placements that are
competitive with those obtained by optimizing a nested Monte Carlo estimate of
the exact EIG using the deterministic best greedy method. Furthermore,
sensor placements obtained using our proposed approach consistently outperform
random sensor placements. 

We also note that while the lower bound was established assuming a Gaussian
prior, the analysis can be extended to the non-Gaussian setting.  The error in
the EIG approximation gains an additional (non-positive) term due to the
non-Gaussianity of the prior.  While the \fo{} EIG is no longer guaranteed to
lower bound the exact EIG in that setting, this additional error term is
design-independent and thus does not directly affect the relative ranking of
sensor configurations.  
\bibliographystyle{abbrv}
\bibliography{refs_abbreviated}

\appendix

\begingroup
\setlength{\abovedisplayskip}{6pt plus 2pt minus 2pt}
\setlength{\belowdisplayskip}{6pt plus 2pt minus 2pt}
\setlength{\abovedisplayshortskip}{4pt plus 2pt minus 2pt}
\setlength{\belowdisplayshortskip}{4pt plus 2pt minus 2pt}

\section{Stochastic Cost-Benefit Greedy Auxiliary Results}\label{appendix:stochastic-cb-greedy}

The following results are used in the analysis of Algorithm~\ref{alg:stochastic-cb-greedy}.

\begin{lemma}[Intersection Probability]\label{lemma:intersection-probability}
    Suppose that the conditions of Lemma~\ref{lemma:cb-greedy-expected-iterative-gain} hold.
    Let $F$ denote the event that $R \cap (S^{*} \backslash S) \ne \emptyset$, where $S^{*}$ is a solution to~\eqref{eq:knapsack-submodular-max-problem}.
    Then,
    \begin{equation}
        \mathbb{P}(R \in F) \ge (1 - \epsilon )  \frac{c(S^{*} \backslash S)}{B}.
    \end{equation}
\end{lemma}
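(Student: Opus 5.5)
We bound the complementary event and use independence of the $T$ draws. In the setting of Lemma~\ref{lemma:cb-greedy-expected-iterative-gain}, $R$ consists of $T$ independent draws (with replacement) from $V\setminus S$. Each draw returns $v$ with probability $c(v)/c(V\setminus S)$. Hence a single draw lands in $S^*\setminus S$ with probability
\[
p \coloneqq \frac{c(S^*\setminus S)}{c(V\setminus S)} \ge \frac{c(S^*\setminus S)}{c(V)},
\]
using $S^*\setminus S\subset V\setminus S$ and $c(V\setminus S)\le c(V)$. By independence, $\mathbb{P}(R\in F^c)=(1-p)^T$.

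Applying $1-x\le e^{-x}$ gives
\[
\mathbb{P}(R\in F)\ge 1-\exp\!\Big(-T\frac{c(S^*\setminus S)}{c(V)}\Big)=1-\exp\!\Big(-\frac{c(S^*\setminus S)}{B}\log(1/\epsilon)\Big),
\]
where we substituted $T=c(V)\log(1/\epsilon)/B$ from~\eqref{eq:T-definition}. Set $x\coloneqq c(S^*\setminus S)/B$. Since $S^*$ is feasible, $c(S^*\setminus S)\le c(S^*)\le B$, so $x\in[0,1]$.

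The remaining step is a concavity argument. The map $g(x)=1-\epsilon^{x}$ is concave on $[0,1]$, with $g(0)=0$ and $g(1)=1-\epsilon$. It therefore lies above its chord on $[0,1]$, which gives $1-\epsilon^x\ge(1-\epsilon)x$. Combining this with the previous display yields
\[
\mathbb{P}(R\in F)\ge (1-\epsilon)\frac{c(S^*\setminus S)}{B},
\]
which is the claim.

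The main obstacle is seeing that the feasibility bound $c(S^*\setminus S)\le B$ is exactly what allows the chord/concavity step, since it restricts $x$ to $[0,1]$. A smaller point of care is that $T$ is in practice its ceiling; rounding $T$ up only lowers $(1-p)^T$, so the bound still holds. If $S^*\setminus S=\emptyset$, both sides vanish and the claim is trivial.
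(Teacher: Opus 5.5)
Your proof is correct and follows the paper's route. You bound $\mathbb{P}(F^c)=(1-p)^T\le\exp\bigl(-T\,c(S^*\setminus S)/c(V)\bigr)$ and then apply a chord/concavity bound, which is the paper's concavity step for $1-e^{-x}$ with $x_1=0$, $x_2=TB/c(V)=\log(1/\epsilon)$ and $\lambda=c(S^*\setminus S)/B$. Your explicit use of feasibility, $c(S^*\setminus S)\le B$, to ensure $\lambda\in[0,1]$, and your remark that rounding $T$ up is harmless, make precise what the paper leaves implicit.
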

\begin{proof}
First, observe that
\begin{align*}
    \mathbb{P}(F^{c})
    &= \Big(1 - \frac{c(S^{*} \backslash S)}{c(V \backslash S)}\Big)^T
    \le \exp\Big(- \frac{T c(S^{*} \backslash S)}{c(V \backslash S)}\Big)
    \le \exp\Big(- \frac{T c(S^{*}\backslash S)}{c(V)}\Big).
\end{align*}
Using the fact that $1 - e^{-x}$ is concave,
and
letting $x_{1} = 0, x_{2} = T B / c(V)$, and $\lambda = c( S^{*}\backslash S) / B$, 
\begin{multline*}
    \mathbb{P}(F)
    = 1 - \mathbb{P}(F^{c})
\ge
1 - \exp\left( - \frac{T c(S^{*}\backslash S)}{c(V)}\right)
 \ge (1 - \lambda) (1 - e^{- x_{1}})\\
= \frac{c(S^{*} \backslash S)}{B} \left(1 - \exp\left(-\frac{T B}{c(V)}\right)\right)
= (1 - \epsilon) \frac{c(S^{*}\backslash S)}{B}.
\end{multline*}
\end{proof}

\begin{lemma}[Selecting a Remaining Optimal Sensor]\label{lemma:sample-remaining-element}
    Suppose the conditions of Lemma~\ref{lemma:cb-greedy-expected-iterative-gain} hold.
    Again, denote by $F$ the event that $R \cap (S^{*} \backslash S) \ne \emptyset$.
    Suppose we uniformly sample a sensor $a \in R \cap (S^{*}\backslash S)$, and denote this random variable by $A$.
    Then for $a \in S^{*}\backslash S$,
    \begin{equation*}
        \mathbb{P}\big\{A = a\,:\, R \cap (S^{*}\backslash S) \ne \emptyset\big\} = c(a) / c(S^{*} \backslash S).
    \end{equation*}
\end{lemma}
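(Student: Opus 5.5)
The plan is to model the sampling in Lemma~\ref{lemma:cb-greedy-expected-iterative-gain} explicitly and compute the conditional law of $A$ by symmetry over draws. Write $R=(r_1,\dots,r_T)$ for the $T$ i.i.d.\ draws from $V\setminus S$, each with $\mathbb{P}(r_t=v)=c(v)/c(V\setminus S)$. Let $N_a=\#\{t: r_t=a\}$ for $a\in S^*\setminus S$ and $M=\sum_{a\in S^*\setminus S}N_a=|R\cap(S^*\setminus S)|$, counted with multiplicity as in the proof of Lemma~\ref{lemma:cb-greedy-expected-iterative-gain}. The event $F$ is $\{M\ge 1\}$. By definition of $A$, $\mathbb{P}(A=a\mid R)=N_a/M$ on $F$.

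First condition on which draw positions land in $S^*\setminus S$. Let $I=\{t: r_t\in S^*\setminus S\}$, so $M=|I|$ and $F=\{I\neq\emptyset\}$. Since the draws are independent, conditional on $I$ the draws $\{r_t\}_{t\in I}$ are i.i.d.\ with the law of $r_1$ restricted to $S^*\setminus S$ and renormalized:
\[
\mathbb{P}(r_t=a\mid I)=\frac{c(a)/c(V\setminus S)}{c(S^*\setminus S)/c(V\setminus S)}=\frac{c(a)}{c(S^*\setminus S)},\qquad t\in I .
\]
This uses the factorization of the joint law of the draws over positions.

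Next compute, on $\{I\neq\emptyset\}$,
\[
\mathbb{P}(A=a\mid I)=\mathbb{E}\Big[\frac{N_a}{|I|}\,\Big|\,I\Big]=\frac{1}{|I|}\sum_{t\in I}\mathbb{P}(r_t=a\mid I)=\frac{c(a)}{c(S^*\setminus S)} .
\]
This does not depend on $I$. Averaging over $I$ conditioned on $I\neq\emptyset$ then gives $\mathbb{P}(A=a\mid F)=c(a)/c(S^*\setminus S)$, which is the claim.

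The main obstacle is bookkeeping rather than depth. We must be precise that $R$ is a multiset sampled with replacement, which matches the hypothesis of Lemma~\ref{lemma:cb-greedy-expected-iterative-gain} and the multiplicity convention ``$j/m$''. We must also justify that conditioning on the index set $I$ leaves the selected draws i.i.d.\ with the renormalized law. An alternative route conditions on $M=m$ and uses exchangeability of the draws to get $\mathbb{E}[N_a\mid M=m]=m\,c(a)/c(S^*\setminus S)$. I would present the index-set version because it avoids any need to compute the law of $M$.
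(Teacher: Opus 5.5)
Your proof is correct and is essentially the paper's argument. The paper conditions on $|R\cap(S^*\setminus S)|=k$, treats the count of $a$ as $\mathrm{Bin}(k,p)$ with $p=c(a)/c(S^*\setminus S)$, and computes $\sum_{j}\frac{j}{k}\binom{k}{j}p^j(1-p)^{k-j}=p$, which does not depend on $k$; this is the alternative route you mention at the end. Your finer conditioning on the index set $I$, combined with linearity of expectation, is the same computation, and it has the small advantage of explicitly justifying the renormalized i.i.d. structure that the paper assumes without comment when it asserts the binomial law.
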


\begin{proof}
    Assume that $k := |R \cap (S^{*}\backslash S)|$. We will show that this does not depend on $k$.
    For $a \in S^{*}\backslash S$, the probability that $a$ appears $j \le k$ times in $R \cap (S^{*}\backslash S)$ is distributed according to a binomial distribution with parameters $k$ and $p = c(a) / c(S^{*}\backslash S)$.
    Thus, the probability that $a$ appears $j$ times is ${k \choose j} p^{j}(1-p)^{k-j}$.
    Also, if $a$ appears $j$ times then the probability of uniformly sampling it from $R \cap (S^{*}\backslash S)$ is $j / k$.
    By summing up over $j$ we get
    \begin{equation}
    \begin{aligned}
        \mathbb{P}\big\{A = a\,:\,|R\cap (S^{*}\backslash S)| = k\big\}
       &= \sum_{j=0}^{k} \frac{j}{k} {k \choose j} p^{j} (1 - p)^{k-j}\\
       &= \frac{1}{k} \sum_{j=0}^{k} j {k \choose j} p^{j} (1 - p)^{k-j}
       = \frac{1}{k} (kp) = p.
    \end{aligned}
    \end{equation}
    Because this does not depend on $k$, it then follows that $\mathbb{P}(A = a) = c(a) / c(S^{*}\backslash S)$.
\end{proof}

\end{document}